\numberwithin{equation}{section}
\newtheorem{theorem}{Theorem}[section]
\newtheorem{definition}[theorem]{Definition}
\newtheorem{lemma}[theorem]{Lemma}
\newtheorem{remark}[theorem]{Remark}
\newtheorem{proposition}[theorem]{Proposition}
\newtheorem{corollary}[theorem]{Corollary}
\numberwithin{equation}{section}
\newcommand*{\Id}{\ensuremath{\mathrm{Id}}}
\newcommand*{\supp}{\ensuremath{\mathrm{supp\,}}}
\newcommand{\T}{{\mathbb{T}}}
\newcommand{\rr}{\mathring{R}}
\newcommand{\ru}{\mathring{R}_q}
\newcommand{\p}{\partial}
\renewcommand{\P}{\mathbb{P}}
\renewcommand{\div}{{\mathrm{div}}}
\newcommand{\curl}{{\mathrm{curl}}}
\renewcommand{\u}{{u_q}}
\renewcommand{\d}{{\rm d}}
\newcommand{\norm}[1]{\lVert#1\rVert}
\newcommand{\la}{\lambda_q}
\newcommand{\laq}{\lambda_{q+1}}
\newcommand{\rs}{r_{\perp}}
\newcommand{\rp}{r_{\parallel}}
\newcommand{\zq}{z_{q}}
\newcommand{\R}{{\mathbb R}}
\newcommand{\lbb}{\overline{\lambda}}
\def\a{{\alpha}}
\def\lbb{\lambda}
\def\wt{\widetilde}
\def\9{{\infty}}
\def\ve{{\varepsilon}}
\def\na{{\nabla}}
\def\({\left(}
\def\){\right)}
\def\tq{{\theta_q}}
\def\tq1{{\theta_{q+1}}}
\def\E{\mathbf{E}}
\def\j{J_q}
\def\cq{\chi_{q+1}}
\begin{document}
	
\title[] {Existence and non-uniqueness of weak solutions with continuous energy to the 3D deterministic and stochastic Navier-Stokes equations}

\author{Alexey Cheskidov}
\address{Institute for Theoretical Sciences, Westlake University, China.}
\email[Alexey Cheskidov]{cheskidov@westlake.edu.cn}
\thanks{}

\author{Zirong Zeng}
\address{School of Mathematics, Nanjing University of Aeronautics and Astronautics, China.}
\email[Zirong Zeng]{beckzzr@nuaa.edu.cn}
\thanks{}

\author{Deng Zhang}
\address{School of Mathematical Sciences, CMA-Shanghai, Shanghai Jiao Tong University, China.}
\email[Deng Zhang]{dzhang@sjtu.edu.cn}
\thanks{}

\keywords{Convex integration, 
continuous energy solution, 
Navier-Stokes equations;  
Non-uniqueness.}

\subjclass[2020]{35A02,\ 35D30,\ 35Q30,\ 60H15.}

\begin{abstract}
The continuity of the kinetic energy is an important property of incompressible viscous fluid flows.
We show that for any prescribed finite energy divergence-free initial data there exist infinitely many global in time weak solutions with smooth energy profiles to both the 3D deterministic and stochastic incompressible Navier-Stokes equations. In the stochastic case the constructed solutions are probabilistically strong. 

Our proof introduces a new backward convex integration scheme with delicate selections of initial relaxed solutions, backward time intervals, and energy profiles. Our initial relaxed solutions satisfy a new time-dependent frequency truncated NSE, different from the usual approximations as it decreases the large Reynolds error near the initial time, which plays a key role in the construction.
\end{abstract}

\maketitle

{\small
	\tableofcontents
}
\section{Introduction}

\subsection{Deterministic Navier-Stokes equations}

We consider the 3D Navier-Stokes equations (NSE) on the torus $\T^3:=[0,1]^3$,
\begin{equation}\label{equa-NS-deter}
	\left\{\aligned
	& \partial_t v - \nu \Delta v+(v\cdot \nabla )v  + \nabla P = 0, \\
 & \div\, v = 0,\\
 & v(0,x)=v_0(x), \\
	\endaligned
	\right.
\end{equation}
where $v:[0,\9)\times\T^3\rightarrow \R^3$ is the velocity field,
$P:[0,\9)\times\T^3\rightarrow \R$ stands for the pressure of the fluid,
and $\nu$ is the positive viscosity coefficient.

The global in time existence of classical solution to the NSE is an outstanding open problem even for smooth initial data. On the other hand, weak solutions, introduced by Leray \cite{leray1934} in 1934, exist globally in time for any finite energy initial data, i.e., $v_0 \in L^2$. These solutions have bounded energy $v \in L^\infty(0,\infty;L^2)$. Moreover, they are continuous in $L^2$ at $t=0$ and satisfy the energy inequality starting from almost every initial time. In particular, solutions constructed by Leray have finite energy dissipation, i.e., $v \in L^2(0,\infty;H^1)$. In bounded domains such solutions were constructed by Hopf \cite{hopf1951}. These weak solutions, usually referred to as the Leray-Hopf solutions, enjoy certain nice properties, such as the weak-strong uniqueness, as they cannot blow up instantaneously. However, it is not known whether the Leray-Hopf solutions are continuous in $L^2$ for all time.

Starting with pioneering work of De Lellis and Sz\'ekelyhidi Jr. \cite{dls09}, numerous new constructions of weak solutions to fluid equations appeared employing the method of convex integration. Displaying some properties observed in turbulent fluid flows, the drawback of these solutions is the infinite energy dissipation.
In the ground breaking paper \cite{bv19b},
Buckmaster and Vicol used convex integration to construct weak solutions in the continuous energy class $C([0,T]; L^2)$ with arbitrary prescribed smooth energy profiles (but not initial data), hence proving the non-uniqueness of the trivial solution $u \equiv 0$. 
For arbitrary prescribed divergence-free initial data in $L^2$, non-unique weak solutions in $C((0,T]; L^2)$ were constructed 
by Buckmaster, Colombo and Vicol \cite{bcv21} for the 3D NSE, and by Burczak, Modena and Sz\'{e}kelyhidi \cite{bms21} for more general power-law flows. However, unless the initial data is smooth enough, all these solutions are not continuous in $L^2$  at the initial time $t=0$, but only weakly continuous, as the energy increases instantaneously. The existence of weak solutions in $C([0,\infty); L^2)$ with  arbitrary prescribed divergence-free initial data in $L^2$ remained an open problem, which we address in the present paper.

We also note that non-uniqueness results can be classified as (see \cite{cl20.2}):
\begin{enumerate}
  \item[$\bullet$]
  ``Weak non-nonuniqueness'': there exists a non-unique weak solution in the class $X$;

  \item[$\bullet$]
  ``Strong non-nonuniqueness'':
  any weak solutions in the class $X$ is non-unique.
\end{enumerate}
Here $X$ denotes a certain functional class of weak solutions.
The non-uniqueness results in \cite{bcv21,bms21}
are strong in the class $C_{weak}([0,T]; L^2) \cap C((0,T]; L^2)$. In this paper, for any divergence-free initial data in $L^2$, we prove the existence and strong non-uniqueness of solutions in the class $C([0,\infty); L^2)$. 

\begin{theorem}[Existence and strong non-uniqueness of solutions with continuous energy]  \label{thm-Nonuniq}
For any divergence-free initial data in $L^2$ there exist infinitely many weak solutions to the 3D NSE \eqref{equa-NS-deter} in the class $C([0,\infty); L^2)$.
\end{theorem}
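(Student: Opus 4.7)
My plan is to construct the solutions as limits of iterates $(u_q, \mathring R_q)_{q \geq 0}$ of a modified convex integration scheme for the Navier-Stokes-Reynolds system
\[
\partial_t u_q - \nu \Delta u_q + \div(u_q \otimes u_q) + \nabla p_q = \div \mathring R_q, \qquad \div u_q = 0,
\]
with $u_q(0) = v_0$ for every $q \geq 0$ and $\mathring R_q \to 0$. The limit $v = \lim_q u_q$ is then a weak solution with $v(0) = v_0$. The obstruction compared with the forward scheme of \cite{bcv21,bv19b} is that for $v_0 \in L^2$ only, the naive initial profile (heat-flow or mollification of $v_0$) carries a Reynolds stress of order one near $t=0$; convex integration perturbations supported down to $t=0$ then raise the energy instantaneously, leaving the limit weakly but not strongly continuous at $t=0$.

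The first new ingredient is a time-dependent frequency truncated NSE for the initial relaxed solution $u_0$:
\[
\partial_t u_0 - \nu \Delta u_0 + \ps \div \Pi_{\leq \mu(t)}(u_0 \otimes u_0) = 0, \qquad u_0(0) = v_0,
\]
with cutoff $\mu(t) \to \infty$ as $t \to 0^+$. Unlike the heat flow approximation, the truncated equation coincides with the NSE at $t=0$, so $u_0 \in C([0,\infty); L^2)$ and the associated Reynolds defect $\mathring R_0$ vanishes at $t = 0$ in $L^1$ with a quantitative rate tunable by $\mu$. Choosing $\mu(t)$ to blow up fast enough at $0$ is crucial: it forces $\mathring R_0$ small near $t=0$ in a sense strong enough to drive the subsequent convex integration.

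The second ingredient is a backward iteration on shrinking intervals $[T_{q+1}, \infty)$ with $T_q \searrow 0$. I would fix a smooth strictly increasing target energy profile $e(t)$ with $e(0) = \|v_0\|_{L^2}^2$, and at stage $q \to q+1$ add an intermittent Mikado/Beltrami-type perturbation $w_{q+1}$ supported in time on $[T_{q+1}, \infty)$, chosen so that $u_{q+1} = u_q + w_{q+1}$ cancels $\mathring R_q$ on $[T_q, \infty)$ up to a much smaller remainder and matches the energy $e$ there; on $[0, T_{q+1}]$ no correction is made, so $u_{q+1} = u_q$ there. Telescoping $\|w_{q+1}\|_{L^\infty_t L^2_x}$ and the standard inductive Reynolds-stress estimate give $v \in C((0,\infty); L^2)$ together with weak continuity at $t = 0$ for free. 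Strong non-uniqueness would follow by varying $e$ on $(0,\infty)$.

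The main obstacle will be promoting weak to strong continuity at $t = 0$, i.e.\ ruling out an instantaneous energy jump. For $t \in (T_{q+1}, T_q]$ only the first $q$ perturbations have acted, so one needs $\sum_{q' \leq q} \|w_{q'}(t)\|_{L^2}^2 \to 0$ as $t \to 0^+$. This forces a delicate mutual schedule: the amplitude $\delta_{q'}$ of $w_{q'}$ on its support must be controlled by the smallness of $\mathring R_{q'-1}$ there, which inductively traces back to the $L^1$-smallness of $\mathring R_0$ near $0$ produced by the time-dependent truncation $\mu(t)$. Once the three parameter schedules $(T_q, \delta_q, \mu)$ are chosen compatibly so that this backward telescoping closes and the target $e$ is matched on each $[T_q, T_{q-1}]$, the limit $v$ lies in $C([0,\infty); L^2)$ with $\|v(t)\|_{L^2}^2 = e(t)$ for $t>0$, and varying $e$ produces the infinitely many solutions claimed.
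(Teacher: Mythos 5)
Your plan is essentially the paper's own proof: the initial relaxed solution solves a frequency-truncated NSE whose cutoff blows up as $t\to 0^+$ (the paper's $\Lambda$-NSE), which makes the initial Reynolds stress vanish at $t=0$ in $L^1$; one then runs a backward convex integration with intermittent perturbations supported on $[T_{q+1},T]$, $T_q\searrow 0$, with amplitudes tied to the decay of $\mathring R_0$ near the initial time and an energy profile pinned between consecutive $\delta_q$'s, and non-uniqueness follows by varying the profile. The only differences are at the level of details: the paper truncates the nonlinearity symmetrically, $\P_{<\Lambda}(\P_{<\Lambda}u\cdot\nabla\P_{<\Lambda}u)$, so that the energy balance and the strong $L^2$-continuity of $u_0$ at $t=0$ come for free (your single projection $\div\,\Pi_{\leq\mu}(u_0\otimes u_0)$ would lose this cancellation), and it selects the smooth energy profiles \emph{after} fixing the backward times $T_q$, within a band around the energy of the $\Lambda$-NSE solution, rather than prescribing an arbitrary increasing profile in advance.
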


Theorem \ref{thm-Nonuniq} actually follows from
the following more general result
which,
for any prescribed divergence-free initial data in $L^2$,
gives infinitely many global-in-time continuous energy solutions
with smooth energy profiles.

\begin{theorem} [Global energy solutions to NSE with prescribed initial data] \label{Thm-Non-SNS-Deter} 
For any $T>0$, given any divergence-free initial datum $v_0\in L^2$,
there exist infinitely many distinct smooth functions
$e: [0,T]\rightarrow [0,\9)$ such that $e(0)=\|v_0\|_{L^2_x}$ and
for each such function $e$ the following holds:

\begin{enumerate}
    \item[$\bullet$]
  Global energy solution:
  there exists a global continuous energy solution $v\in C([0,T]; L^2_x)$ to the 3D NSE \eqref{equa-NS-deter}.

  \item[$\bullet$]
  Energy profile:
  \begin{align}\label{energy-1}
 \|v(t)\|_{L^2_x}^2=e(t) \quad\text{for all} \quad t\in [0,T].
\end{align}
\end{enumerate}
\end{theorem}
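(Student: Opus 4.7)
The plan is to construct the weak solution $v$ as the $C_tL^2$ limit of a sequence $(v_q,\mathring{R}_q)$ solving a Navier--Stokes--Reynolds system
\[
\partial_t v_q - \nu \Delta v_q + \div(v_q\otimes v_q) + \nabla P_q \;=\; \div \mathring{R}_q,\qquad \div v_q = 0,
\]
with $\mathring{R}_q \to 0$ in a relaxed norm and $\sum_q \|v_{q+1}-v_q\|_{C_tL^2_x}<\infty$. To hit the prescribed initial datum and to keep the limit continuous at $t=0$ in $L^2_x$, each perturbation $w_{q+1}=v_{q+1}-v_q$ will be supported on a backward time interval $[\tau_{q+1},T]$ with $\tau_{q+1}\searrow 0$, and $v_{q+1}\equiv v_q$ on $[0,\tau_{q+1}]$. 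Then continuity at $0$ is reduced to continuity of the very first iterate, while the geometric decay of $\|w_{q+1}\|_{C_tL^2_x}$ controls the tail uniformly on the shrinking left pieces.

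\textbf{Initial relaxed solution.} The zeroth iterate cannot be taken as a Leray--Hopf solution, since for rough data its Reynolds-type error near $t=0$ is not small enough to be dissipated by intermittent perturbations without losing $L^2_x$-continuity at $t=0$. Instead I would solve a time-dependent frequency-truncated NSE of the schematic form
\[
\partial_t \bar v - \nu \Delta \bar v + P_{\leq \kappa(t)}\div\bigl((P_{\leq \kappa(t)}\bar v)\otimes (P_{\leq \kappa(t)}\bar v)\bigr) + \nabla \bar P = 0,\qquad \bar v(0)=v_0,
\]
with $\kappa(t)\to\infty$ as $t\to 0^+$ and $\kappa$ decreasing away from $0$. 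The truncation keeps more modes near $t=0$, so that the associated Reynolds stress $\mathring{R}_0$ (which measures the difference with the true nonlinearity) has a norm that tends to $0$ as $t\to 0^+$ faster than the gap $|e(t)-\|v_0\|_{L^2}^2|$ one wants to fill. Away from $t=0$, the smaller $\kappa(t)$ gives $\bar v$ and $\mathring R_0$ the regularity in space needed to run convex integration.

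\textbf{Iteration and energy correction.} On each $[\tau_{q+1},T]$, I would build $w_{q+1}$ from the intermittent jets/flows of \cite{bv19b,bcv21}, organized so that the principal self-interaction cancels $\mathring R_q$ there and at the same time pumps/drains energy so that $\|v_{q+1}(t)\|_{L^2_x}^2 = e(t)+O(\delta_{q+2})$ on $[\tau_{q+1},T]$; a smooth temporal cutoff turns $w_{q+1}$ off on $[0,\tau_{q+1}]$. Standard gluing on short subintervals produces a glued approximate solution with improved stress and then the perturbation cancels it. With a consistent choice of parameters $(\lambda_q,\delta_q,\tau_q,\kappa(\cdot))$ the usual inductive estimates close; passing to the limit yields a weak solution with $\|v(t)\|_{L^2_x}^2=e(t)$ for all $t\in[0,T]$. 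The freedom in choosing $e$ among smooth profiles with $e(0)=\|v_0\|_{L^2_x}^2$ is infinite-dimensional, so infinitely many distinct solutions are produced, one per admissible $e$.

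\textbf{Main obstacle.} The decisive difficulty is compatibility at $t=0$: the perturbations must be active on intervals whose left endpoint is arbitrarily close to $0$, in order to cancel the entire Reynolds stress and shape $e$ on all of $(0,T]$, yet their $L^2_x$-norms must tend to $0$ uniformly in $q$ as $t\to 0^+$ to prevent the instantaneous jump of energy that plagues \cite{bcv21,bms21}. This forces three simultaneous balances near $t=0$: (i) $\tau_{q+1}\to 0$ fast enough to cover $(0,T]$ in the limit but slowly enough for the perturbation estimates on $[\tau_{q+1},T]$ to hold; (ii) $\kappa(t)$ large enough near $0$ so that the base Reynolds error is dominated by the prescribed energy gap; and (iii) the amplitudes $\delta_{q+1}^{1/2}$ small enough, yet with enough intermittency, to both cancel $\mathring{R}_q$ and shape $e(t)$. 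Making these three scalings interlock is what I expect to be the central technical challenge, and it is precisely what the authors' ``delicate selections of initial relaxed solutions, backward time intervals and energy profiles'' is designed to solve.
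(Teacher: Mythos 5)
Your proposal follows essentially the same route as the paper: a time-dependent frequency-truncated NSE (the paper's $\Lambda$-NSE) as the initial relaxed solution whose Reynolds error vanishes as $t\to 0^+$, a backward sequence of times $T_q\searrow 0$ with cutoff perturbations built from intermittent jets that leave the initial datum untouched, an energy iteration pinning $\|u_q(t)\|_{L^2_x}^2$ to $e(t)$ up to $O(\delta_{q+2})$, and infinitely many admissible smooth profiles giving non-uniqueness. The only cosmetic deviation is your mention of a gluing step, which the paper does not use (it mollifies $\mathring R_q$ and adds the perturbation directly); otherwise the strategy, including the three interlocking scalings you identify near $t=0$, is exactly the one the paper implements.
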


The proof of Theorem \ref{Thm-Non-SNS-Deter} is based on a new backward integration scheme,
which includes suitably selections of
backward time sequence and a new approximate NSE system, which we call $\Lambda$-Navier-Stokes equations.
We will present some remarks in 
Subsection \ref{subsec-rem} below
to show the flexibility of this method.

\subsection{Stochastic Navier-Stokes equations}

Our strategy also allows to consider more general stochastic Navier-Stokes equations
and construct global-in-time probabilistically strong finite energy solutions with prescribed initial data,
which solves one of the open problems in the field of SPDEs (see \cite{F08, HZZ22.2}).

More precisely,
we consider the 3D stochastic Navier-Stokes equations
driven by additive noise on the torus $\T^3$,
\begin{equation}\label{equa-NS}
	\left\{\aligned
	& \d v+(- \nu \Delta v+(v\cdot \nabla )v  + \nabla P)\d t= \text{d} W_t ,\\
 & \div\, v = 0,\\
 & v(0,x)=v_0(x).\\
	\endaligned
	\right.
\end{equation}
Here, the flow is driven by $GG^*$-Wiener processes $W$
on a stochastic basis $(\Omega,\mathcal{F}, \{\mathcal{F}_t\}, \mathbb{P})$
with the normal filtration $\{\mathcal{F}_t\}_{t\geq 0}$ generated by
$W$, $G$ is a Hilbert-Schmidt operator from a Hilbert space $U$ to $L^2_\sigma(\T^3)$, which is the subspace of $L^2(\T^3)$ containing mean-free and divergence-free functions.
In particular,
when the noise is absent,
\eqref{equa-NS} reduces to
the classical NSE \eqref{equa-NS-deter}.
Initiated by Bensoussan and Temam \cite{BT73}, the stochastic NSE has been extensively studied, but the global existence of probabilistically strong solutions remained a major problem, see \cite{F08}. 

Even though constructing probabilistically strong Leray-Hopf solutions is still out of reach, recent advances in convex integration technique brought to the field of mathematical fluid dynamics by De Lellis and Sz\'{e}kelyhidi \cite{dls09} resulted in 
constructions of non-Leray-Hopf (wild) solutions. 
In \cite{HZZ22.2},  Hofmanov\'a, Zhu, and Zhu  constructed global-in-time probabilistically strong and analytically weak solutions to stochastic NSE for any divergence-free initial data in $L^2$ almost surely. These solutions lie in the unbounded energy class $L^q([0,\infty);L^2)$ almost surely, $1\leq q <\infty$. Based on convex integration schemes, these constructions also show that there is no uniqueness for such solutions.  In a consequent paper \cite{HZZ22.3}, the authors constructed probabilistically strong and analytically weak solutions in the continuous energy class $C([0,\infty); L^2_x)$ almost surely. However,  the initial data can not be prescribed for these solutions. See Subsection~\ref{sec:Previous_Work} for a more detailed overview.

To summarise, even for arbitrary prescribed smooth initial data, the existence of finite energy solutions was not known for the  3D stochastic NSE. We introduce a new backward convex integration scheme to solve this problem. Moreover, using our new $\Lambda$-NSE approximation as the first step in the scheme, we can prove the existence of infinitely many solutions in the continuous energy class $C([0,\infty); L^2)$ for arbitrary prescribed initial data in $L^2_\sigma$ almost surely, see Theorem \ref{Thm-Non-SNS} below. 

\begin{theorem} [Global energy solutions to stochastic NSE with prescribed $L^2_\sigma$ initial data] \label{Thm-Non-SNS}
Let $r\in [1,\infty)$ be fixed. 
Then, for any $T>0$, 
any $\mathcal{F}_0$-measurable and divergence-free initial datum $v_0\in L^2_\sigma$,  $\P$-a.s.,
satisfying $ \|v_0\|_{L^{2}_x} \leq M$ for some 
deteministic constant $M>0$, 
there exist infinitely many distinct smooth functions $e: [0,T]\rightarrow [0,\9)$,
and for each such function $e$, the following holds:
\begin{itemize}
    \item Global energy solution: there exists a global $\{\mathcal{F}_t\}$-adapted solution $v\in L^r(\Omega;C([0,T];L^2_x))$ to stochastic NSE \eqref{equa-NS}.
    \item Energy profile: \begin{align}\label{energy}
 \E \|v(t)\|_{L^2_x}^2= e(t) \quad\text{for all} \quad t\in [0,T].
\end{align}
\end{itemize}
\end{theorem}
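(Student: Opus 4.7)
\smallskip

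\noindent\textbf{Reduction to a random PDE.} My plan is to reduce the stochastic NSE to a pathwise random PDE. Set $v = u + z$, where $z$ solves the linear stochastic Stokes-type equation $\d z + (-\nu\Delta z + z)\,\d t = \d W_t$ with $z(0) = 0$; then $z$ is $\{\mathcal{F}_t\}$-adapted with spatially smooth sample paths and enjoys Gaussian moment bounds of every order. Substituting yields a random PDE for $u$ with coefficients depending on $z$, which I would attack by a backward convex integration scheme run pathwise. Adaptedness is guaranteed because each iterate is produced by solving pathwise equations whose data are $\{\mathcal{F}_t\}$-adapted, and $L^r(\Omega)$ bounds are obtained in the standard way by introducing a stopping time $\tl$ at which a high enough $C^0_t C^N_x$-norm of $z$ first reaches $L$, proving everything on $[0,\tl\wedge T]$, and then sending $L\to\infty$ together with a classical gluing/measurable selection to extend to $[0,T]$.

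\smallskip

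\noindent\textbf{Backward iteration with $\Lambda$-NSE initial data.} Fix a smooth target $e:[0,T]\to[0,\infty)$ with $e(0)=\|v_0\|_{L^2_x}^2$ and $e(t)$ strictly larger than the a priori energy of the relaxed iterate on $(0,T]$; fix a decreasing sequence $T = t_0 > t_1 > \cdots \searrow 0$ and the usual CI parameters $\la,\dq$. At each stage $q$ I will maintain an $\{\mathcal{F}_t\}$-adapted pair $(\u,\tr_q)$ on $[0,T]$ solving a Navier-Stokes-Reynolds system driven by $z$, with $\|\tr_q\|_{L^1_x}\lesssim\deq$ on $[t_q,T]$ and a larger but vanishing bound as $t\downarrow 0$, and with $\mathbb{E}\|\u+z\|_{L^2_x}^2$ matching $e$ up to an error of order $\deq$ on $[t_{q-1},T]$. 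The starting iterate $u_0$ is the crucial new object: it is taken as a solution of the $\Lambda$-NSE, a time-dependent frequency-truncated NSE with threshold $\Lambda(t)\nearrow\infty$ as $t\downarrow 0$, engineered so that $u_0(0)=v_0$ exactly and so that the associated Reynolds error decreases to zero as $t\to 0$, which is what allows the backward scheme to close at the initial time.

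\smallskip

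\noindent\textbf{Perturbations and passage to the limit.} The step $q\to q+1$ is a standard intermittent-jets convex integration step: I would add $w_{q+1}$ built from spatial intermittent jets at frequency $\laq$ modulated by temporal cut-offs concentrated near $[t_{q+1},t_q]$ and glued smoothly over $[t_q,T]$. These perturbations are designed to cancel $\tr_q$ and to inject the correct amount of energy so that $\mathbb{E}\|u_{q+1}+z\|_{L^2_x}^2$ tracks $e$. Classical CI estimates give $\|w_{q+1}\|_{L^\infty_t L^2_x}\lesssim \deq^{1/2}$, which is summable in $q$, so $\u\to u$ in $C([t_\ast,T];L^2_x)$ for every $t_\ast>0$, almost surely and in $L^r(\Omega)$. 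Continuity at $t=0$ follows from two facts: $u_0(t)\to v_0$ in $L^2_x$ as $t\to 0$ by construction of the $\Lambda$-NSE, and $\sum_{q\ge q_0}\|w_{q+1}\|_{L^\infty([0,t_{q_0}];L^2_x)}\to 0$ as $q_0\to\infty$. The limit $v = u + z$ then lies in $L^r(\Omega;C([0,T];L^2_x))$ and satisfies $\mathbb{E}\|v(t)\|_{L^2_x}^2 = e(t)$; distinct smooth profiles $e$ produce distinct solutions, yielding the asserted infinite family.

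\smallskip

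\noindent\textbf{Main obstacle.} The delicate part is the construction and analysis of the initial relaxed $u_0$ via the $\Lambda$-NSE: the truncation profile $\Lambda(t)$ must grow fast enough near $t=0$ to accommodate merely $L^2_\sigma$ initial data and to force the Reynolds error to vanish as $t\downarrow 0$, yet slow enough that the intermittent parameters at stage $1$ remain in the range where the CI estimates close, and all of this must be compatible with the stopping-time argument so that the $L^r(\Omega)$ bounds survive. Calibrating $\Lambda(t)$, the cut-off sequence $\{t_q\}$, and the CI parameters $\la,\dq$ simultaneously --- uniformly in the stopping time $\tl$ --- is the core technical difficulty and is precisely what the new backward convex integration scheme is engineered to handle.
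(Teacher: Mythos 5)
Your overall skeleton (shift by a stochastic convolution, $\Lambda$-NSE as the initial relaxed solution so that the Reynolds error vanishes at $t=0$, backward time intervals, intermittent jets, energy pumping toward $e$) matches the paper, but your probabilistic architecture is not the one that makes the theorem work, and as proposed it has a genuine gap. You run the scheme pathwise up to a stopping time $\tau_L$ where a $C^0_tC^N_x$-norm of $z$ reaches $L$, and then ``send $L\to\infty$ with gluing/measurable selection.'' Two problems. First, under the paper's standing assumption $tr(GG^*)<\infty$ the convolution $z$ is not spatially smooth: it only lies in $C_TH^{1-\delta}_x$ (Proposition \ref{Prop-noise}), so the $C^0_tC^N_x$ quantities you stop on need not even be finite; $L^\infty_x$ control of the noise is only available after the level-dependent frequency truncation $z_q=\mathbb{P}_{\leq\lambda_{q+1}^{\varepsilon/8}}z$ of \eqref{def-ziq}, whose commutator errors must then be absorbed into $\mathring{R}_{com}$ --- an ingredient absent from your plan. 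Second, and more fundamentally, the statement you must prove is an exact identity of expectations, $\E\|v(t)\|_{L^2_x}^2=e(t)$ for every $t$, together with an $L^r(\Omega;C_TL^2_x)$ bound for the fixed filtration. The paper's energy pumping is therefore not pathwise: the amplitude $\gamma_q$ in \eqref{def-gamma} is built from the deterministic function $t\mapsto\E\|u_q+z_q\|_{L^2_x}^2$, and the iteration \eqref{energy-est} controls this expectation over the whole of $\Omega$ at every time. A construction performed only on $[0,\tau_L\wedge T]$, with parameters depending on $L$, cannot close this expectation-level bookkeeping: on $\{\tau_L\le T\}$ the iterate is undefined, and pasting solutions for different $L$ (or a measurable selection) destroys both the exact energy identity and the uniform moment bounds. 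The paper avoids stopping times entirely by propagating explicit, $q$-dependent moment estimates such as \eqref{uc} and \eqref{rl1b}, with bounds of size $(12^q\cdot 8mL^2)^{12^q}$ that are absorbed by taking $\lambda_q$ large in \eqref{def-la0}--\eqref{def-laq}; this is the device your proposal is missing.

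A smaller but still substantive point: ``$e$ strictly larger than the a priori energy on $(0,T]$'' is not the right condition. For strong continuity at $t=0$ with $u(0)=v_0$, the energy gap must be pinched between two scales on each backward interval, as in \eqref{asp-0-energy}--\eqref{asp-e-q}: a lower bound of order $\delta_{q+2}$ (needed because the perturbation on $[T_{q+1},T_q]$ must at least carry the energy of the order-$\delta_{q+2}$ Reynolds stress there, so that the Geometric Lemma applies) and an upper bound of order $\delta_{q+1}$ (so the perturbations near $t=0$ are small in $L^2$ and the tail sums you invoke actually vanish). Your iterative aim ``matching $e$ up to $\delta_q$'' gestures at this, but without the two-sided selection, coordinated with the choice of $T_q$ through the decay \eqref{pro-r0} of $\mathring R_0$, the continuity at the initial time and the room for infinitely many smooth profiles do not follow. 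Finally, your modified shift $\d z+(-\nu\Delta z+z)\d t=\d W_t$ inserts an extra linear term $z$ into the equation for $u$, which must be carried in the Reynolds decomposition; the paper's choice \eqref{spde2} avoids this, and the deviation is harmless only if you account for it.
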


As a consequence, 
we obtain non-uniqueness
in the continuous energy class $  C([0,\infty); L^2_x)$ for the 3D stochastic NSE for every divergence-free initial data in $L^2_\sigma$. 

\begin{corollary}[Strong non-uniqueness of continuous energy solutions]  \label{thm-Nonuniq-Stoch}
	For any divergence-free initial datum $v_0\in L_\sigma^2$, $\P$-a.s., there exist infinitely many global-in-time
    probabilistically strong solutions in $C([0,\infty);L^2_\sigma)$, $\P$-a.s.
	to \eqref{equa-NS} with the same
	initial datum $v_0$. 
\end{corollary}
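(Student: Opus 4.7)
The plan is to deduce the corollary from Theorem~\ref{Thm-Non-SNS} by closing the two gaps between its hypotheses/conclusion and those of the corollary: the deterministic bound $\|v_0\|_{L^2}\leq M$ and the finite time horizon $[0,T]$. I would close these by a localization in sample space and an iteration in time, and then exploit the freedom to prescribe distinct smooth energy profiles to obtain infinitely many solutions.

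For the localization, partition $\Omega$ into the disjoint $\mathcal{F}_0$-measurable sets $A_k:=\{k-1\leq \|v_0\|_{L^2_x}<k\}$ for $k\in\mathbb{N}$. On $A_k$ the initial datum is bounded by the deterministic constant $k$, so Theorem~\ref{Thm-Non-SNS} applies to $(v_0\mathbf{1}_{A_k},W)$ under the conditional measure $\P(\cdot\mid A_k)$; since $A_k\in\mathcal{F}_0$, the independent-increments property of $W$ passes to this conditional law. Denoting by $v^{(k)}$ the solution produced on $A_k$, the pasted process $v:=\sum_{k\geq 1} v^{(k)}\mathbf{1}_{A_k}$ is a well-defined $\{\mathcal{F}_t\}$-adapted analytically weak solution with $v\in C([0,T];L^2_\sigma)$ $\P$-a.s.\ realizing the prescribed energy profile $e$.

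For the global time horizon, I would iterate inductively. Suppose $v$ has been built on $[0,n]$. Since $v\in L^r(\Omega;C([0,n];L^2_x))$, the trace $v(n)\in L^2_\sigma$ almost surely and is $\mathcal{F}_n$-measurable. Applying the localization argument above to the initial datum $v(n)$ and the shifted Wiener process $\widetilde W_t:=W_{n+t}-W_n$, which is independent of $\mathcal{F}_n$, yields an extension to $[n,n+1]$. Fixing in advance a smooth profile $e:[0,\infty)\to[0,\infty)$ with $e(0)=\E\|v_0\|_{L^2_x}^2$ and invoking Theorem~\ref{Thm-Non-SNS} on each interval $[n,n+1]$ with the restricted profile $e|_{[n,n+1]}$ guarantees continuity of $t\mapsto\E\|v(t)\|^2_{L^2_x}$ and proper gluing of the pieces into a trajectory in $C([0,\infty);L^2_\sigma)$.

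Finally, for the non-uniqueness part: Theorem~\ref{Thm-Non-SNS} supplies infinitely many distinct smooth profiles $e$, and two solutions whose expected squared norms $e_1\neq e_2$ differ cannot coincide almost surely. I expect the main obstacle to be not analytic but bookkeeping: one has to verify that the pasting and the iteration preserve $\{\mathcal{F}_t\}$-adaptedness and the probabilistically strong solution property, and that the joined trajectory is continuous in $L^2_\sigma$ at the gluing times $t=n$. Both follow from the prescribed $C^\infty$-regularity of $e$ on $[0,\infty)$ and the $L^2_x$-continuity furnished by Theorem~\ref{Thm-Non-SNS} on each piece.
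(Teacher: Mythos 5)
Your proposal is essentially the paper's own proof: the paper likewise partitions $\Omega$ into the $\mathcal{F}_0$-measurable sets $\Omega_M=\{M\leq\|v_0\|_{L^2_x}<M+1\}$, applies Theorem~\ref{Thm-Non-SNS} on each piece (to the truncated datum $v_01_{\Omega_M}$ under $\P$ rather than via the conditional measure, which is a cosmetic difference), pastes $v=\sum_M v^M1_{\Omega_M}$, extends globally by restarting from the terminal value on successive intervals, and obtains infinitely many solutions by distinguishing the energy profiles supplied by Theorem~\ref{Thm-Non-SNS}. One caveat: Theorem~\ref{Thm-Non-SNS} does not allow you to prescribe an arbitrary smooth profile $e$ on $[0,\infty)$ in advance and realize $e|_{[n,n+1]}$ on each interval—it only asserts the existence of infinitely many profiles attached to a given datum—so your claim that the glued trajectory realizes a pre-chosen global profile is not justified as stated; this is harmless, since continuity at the gluing times follows from matching the terminal value with the next initial datum, and non-uniqueness already follows from distinct profiles on the first interval, exactly as in the paper.
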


\subsection{Comparison with previous works} \label{sec:Previous_Work}
In recent years, there have been a significant progress 
towards
the understanding of non-uniqueness for 3D Navier-Stokes equations.

\medskip
\paragraph{\it Deterministic NSE}

In the seminal paper \cite{bv19b},
Buckmaster and Vicol used convex integration method, introduced by De Lellis and Sz\'ekelyhidi Jr. \cite{dls09} for the Euler equations, to construct weak solutions in the continuous energy class $C([0,T]; L^2_\sigma)$ with arbitrary prescribed smooth energy profile, hence proving the non-uniqueness of the trivial solution $u \equiv 0$. The space-intermittent convex integration introduced in \cite{bv19b}
has been successfully applied to other viscous fluid models as well.
In \cite{bcv21}, Buckmaster, Colombo and Vicol constructed non-unique weak solutions of the 3D NSE in $C((0,T]; L^2_\sigma)$ for arbitrary prescribed divergence-free initial data in $L^2$. However, unless the initial data was smooth enough, all these solutions were not continuous in $L^2$  at the initial time $t=0$, but only weakly continuous. Remarkably, this result holds not only for the classical 3D NSE, but also for the 3D hyper-dissipative NSE with any power of the Laplacian less than the critical $5/4$ Lions exponent, see also \cite{lt20}.
Later, Burczak, Modena and Sz\'{e}kelyhidi \cite{bms21} considered more general power-law flows that contain NSE as a typical model. For arbitrary prescribed $L^2_\sigma$ initial data they also constructed non-unique weak solutions in $C((0,T]; L^2_\sigma)$, but again only weakly continuous at the initial time $t=0$. As in \cite{bcv21}, the Reynolds stress associated to the initial relaxed solution remained bounded, but did not vanish at the initial time, which resulted in the discontinuity in $L^2$ of the final constructed weak solution at $t=0$ unless the initial data was smooth enough.

Moreover,
the sharp non-uniqueness for NSE near endpoints of Lady\v{z}enskaja-Prodi-Serrin (LPS) criteria was proved in \cite{cl20.2,cl23}.
Afterwards,
this sharpness regularity
with respect to the LPS criteria
was extended to the 3D hyper-dissipative NSE in  \cite{lqzz22}.
See also \cite{lzz21,lzz21.2} for relevant results of MHD equations.

We also would like to mention the recent works \cite{ABC21,ABC22} by
Albritton, Bru\'{e} and Colombo,
where the non-uniqueness of Leray-Hopf solutions was proved for the 3D NSE with external forces, using the unstable vortex construction by Vishik \cite{vis18a,vis18b}. In the unforced case, a method for proving non-uniqueness of self-similar solutions based on instability was 
developed by Jia and \v{S}ver\'{a}k \cite{JS14,JS15}.

For the convenience of readers,
the following table shows the weak/strong non-uniqueness in the existing works.
$$
\begin{array}{lll}
	\hline \\ \text { Results } \qquad& \text { Category } \qquad&  \text { space } \\[2mm]
	\hline \\ \text { \cite{leray1934},\cite{hopf1951} } & \text { Existence } &  C_{weak}([0,T]; L^2)\cap L^2(0,T;\dot{H}^1)  \\[2mm]
	\text{ \cite{ABC21} } & \text { Weak Nonuniqueness with force} &  C([0,T]; L^2)\cap L^2(0,T;H^1) \\[2mm]
 	\text { \cite{bv19r}  } & \text { Weak  Nonuniqueness}  & C([0,T]; H^{\beta
}), \ 0<\beta \ll 1 \\[2mm]
    \text { \cite{bcv21} } & \text { Strong Nonuniqueness } &  C([0,T]; H^\beta), \ 0<\beta \ll 1  \\[2mm]
    \text { \cite{bcv21}, \cite{bms21} } & \text { Strong Nonuniqueness } &  C_{weak}([0,T]; L^2) \cap C((0,T]; L^2)  \\[2mm]
	\text { \cite{cl20.2}} \quad& \text { Strong Nonuniqueness } &  L^\gamma(0,T;L^\9),\ 1\leq \gamma<2 \\[2mm]
	\text { Theorem~\ref{Thm-Non-SNS-Deter} }\qquad & \text { Strong Nonuniqueness }  & C([0,\infty); L^2) \\[2mm]
	\hline
\end{array}
$$

\medskip
\paragraph{\it Stochastic NSE}

The mathematical study of stochastic NSE was initiated by Bensoussan and Temam \cite{BT73}.
Since then there have been considerable efforts to show the existence and uniqueness of weak solutions to 3D stochastic NSE in various settings.
Flandoli-Gatarek \cite{FG95} proved the global existence of Leray-Hopf martingale (probabilistically weak) solutions to stochastic NSE in bounded domains.
The case of unbounded domains was solved by Mikulevicius-Rozovskii \cite{MR05} and Brze\v{z}niak-Motyl \cite{BM13}.
See also \cite{HM06,KNS20} and references therein
for the long-term ergodic behavior of 2D stochastic NSE.
The existing literature highlights the need for a deeper understanding of the probabilistic aspects of weak solutions in stochastic settings.

It has been pointed out by Flandoli
\cite[Page 83]{F08} that ``As in the deterministic case,
strong continuity of trajectories in $H$ is an open problem'', where $H$ is the space $L^2_\sigma$.
Moreover,
it is also mentioned that
(cf. \cite[Page 84]{F08})
``there is no result of existence of strong solutions (with Leray-Hopf regularity, \cite[Definition 4.2]{F08})
for the 3D stochastic Navier-Stokes equation".
Here {\it probabilistically strong solutions}
mean that the solutions correspond to the given Brownian stochastic basis
and have the Leray-Hopf regularity.
This concept is delicately different from that of {\it probabilistically weak/martingale solutions},
which correspond to flexible (not fixed)
Brownian stochastic basis.
We refer to Definitions 4.1 and 4.2 of \cite{F08} for the precise definitions.

Using the method of convex integrations,
Hofmanov\'a, Zhu, and Zhu \cite{HZZ19} constructed local in time (up to a stopping time) probabilistically strong analytically weak solutions and proved
the non-uniqueness in law of weak solutions.
Afterwards, in \cite{HZZ22.2}, for any divergence-free initial data in $L^2$ almost surely, the authors proved the non-uniqueness of global-in-time
probabilistically strong solutions
in the unbounded energy class $L^p_TL^2_x$,
$1\leq p<\infty$. 
Like in \cite{bms21}, 
a Stokes type stochastic solution was also used in \cite{HZZ22.2} as the initial relaxed solution,
and the input of noise gave rise to an explosive Reynolds stress at the initial time.
This leads to stochastic solutions that are not only discontinuous in $L^2$ at the initial time, as in the deterministic case \cite{bms21,bcv21}, but even unbounded in $L^2$.

Hofmanov\'a, Zhu, and Zhu also proved the existence of infinitely many stationary (with shift invariant law) and ergodic solutions \cite{HZZ22.3}. In particular, in \cite{HZZ22.3}, the authors constructed probabilistically strong and analytically weak solutions in the continuous energy class $C([0,\infty); L^2_x)$ almost surely. However,  the initial data can not be prescribed for these solutions. See also \cite{HZZ23-arma} for the non-uniqueness for the stochastic NSE with white noise. Also, the sharp non-uniqueness result near one of the LPS endpoints $L^2_tL^\infty$ \cite{cl20.2} has been extended to the stochastic setting in \cite{cdz22}.

Finally based on the method in \cite{ABC21,ABC22},
the non-uniqueness in law of Leray-Hopf martingale solutions
was recently proved in \cite{BJLZ23,HZZ23} for the case of external stochastic forces.

\medskip
\paragraph{\it Current work}
In the present work, we introduce a new {\it backward convex integration scheme}
for 3D NSE, where one of the main novelties is a new NSE approximation \eqref{equa-nse-2}, which we call the $\Lambda$-NSE.
In this system only low modes below the frequency $\Lambda(t)$ are advected by the velocity, and the dissipative term regularizes solutions above that frequency. Contrary to usual approximations, the 
truncation wavenumber $\Lambda(t)$ is not constant, but a decreasing function that blows up at the origin: $\Lambda(t) \to \infty$ as $t \to \infty$. So the approximation becomes more precise as time approaches zero, which is an essential ingredient of the scheme. Instead of choosing Stokes type solutions
as in \cite{bms21,HZZ22.2} or solutions to the fractional Stokes equation as in \cite{cl23},
we choose solutions to the $\Lambda$-NSE as the initial relaxed solutions
to start our backward convex integration scheme. These solutions are globally smooth and the corresponding nonlinearity vanishes on high modes at the initial time. Hence the $\Lambda$-NSE allows to decrease the problematic Reynolds error near the initial time,
and to obtain the strong continuity of solutions for all time (including $t=0$). Combining this with a delicate selection of a sequence of backward time intervals, energy profiles, and frequency parameters, we design a scheme that is suitable for the rough regularity of $L^2$ initial data. A room between lower and upper bounds on energy errors (see \eqref{asp-0-energy} and \eqref{asp-e-q}) allows to construct solutions with infinitely many distinct smooth energy profiles.

\subsection{Further comments and remarks} \label{subsec-rem}

Let us present some further comments and remarks 
below. 

\medskip 
$(i)$ {\it Regular initial data.}
For initial data with better regularity than $L^2$,
our backward convex integration scheme is flexible to gain regularity for solutions 
on the whole time interval starting from the initial time. 
For instance, when $v_0\in H^3$,
the energy solutions in Theorems 
\ref{thm-Nonuniq} and \ref{Thm-Non-SNS} can be
constructed in the regular class 
$C([0,T]; H^{\beta'})
\cap C^{\beta'}([0,T]; L^2)$
for some $\beta'>0$, 
as in the ususal convex integrations. 
Actually, 
the backward scheme 
further reveals 
the deep relationship between 
the regularity of initial data 
and 
the decay rate of Reynolds stress 
near the initial time, 
as well as the parameters in 
intermittent flows.  

To be more precise,  
when $v_0\in H^3$, 
an improved decay rate for the $L^1$-norm of
the initial Reynolds stress $\rr_0$ 
can be derived, such as
\begin{align*}
    \|\mathring{R}_{0}\|_{L^r_{\Omega}C_{[0,T_*]}L^{1}_x} \lesssim T_*^{k}, 
\end{align*}
for some $k>0$.
Note that
the above decay rate is better than the vanishing property
in \eqref{pro-r0} below for the rough $L^2$ case.
It allows 
to link the frequency of the approximate solution to its amplitude as in the usual case 
(see \eqref{la} below), 
to select the backward time 
in an explicit way as in \eqref{Tq-H3}, 
and thus to construct solutions with 
gained regularity on the whole time interval. 
More detailed proofs are contained in Remarks 
\ref{Rem-decay}, \ref{Rem-parameter} and 
\ref{Rem-regular} below.

\medskip 
$(ii)$
{\it Strong non-uniqueness for hyperdissipative NSE up to the Lions exponent.} 
Let us consider a more general model of 
incompressible hyperdissipative NSE
\begin{equation}  \label{equa-hyperNSE}
	  \partial_t v + \nu (-\Delta)^\alpha v+(v\cdot \nabla )v  + \nabla P = 0, \\
\end{equation}
where $\alpha\geq 1$, ${\text div}\, v =0$. 

A classical result by Lions \cite{lions69} states that, 
if the viscous exponent $\alpha \geq 5/4$, 
equation \eqref{equa-hyperNSE} is 
well-posed in the space $C([0,T]; L^2_\sigma) \cap L^2(0,T; H^1)$ 
for any $L^2_\sigma$ initial data. 
In contrast,
if $\alpha\in [1,5/4)$,
it was proved in \cite{lt20,bcv21} that there exist different weak solutions
in $C([0,T]; L^2_\sigma)$,
and thus, the Lions exponent $5/4$ serves as the threshold for the 
well-posedness in  $C([0,T]; L^2_\sigma)$.

Our scheme also applies to this model 
and reveals that
the non-uniqueness phenomenon indeed occurs in the strong sense
for {\it any} $L^2_x$ initial data
when $\alpha \in [1,5/4)$.
This is in stark contrast to 
the high viscous case $\alpha\geq 5/4$
where the well-posedness holds  
for any $L^2_x$ initial data.

In order to prove this, in \eqref{larsrp}
we modify the parameters for the velocity perturbations as follows
\begin{align*}
    \rp:= \lambda_{q+1}^{-1+2\ve},\ \rs := \lambda_{q+1}^{-1+4\ve},\
	 \lambda := \lambda_{q+1},\ \mu:=\lambda_{q+1}^{\frac32-6\ve},\ \sigma=\lambda_{q+1}^{2\ve},
\end{align*}
where $\ve\in \mathbb{Q}_+$ is chosen sufficiently small such that \begin{align}\label{b-beta-ve-3}
0<\ve<\frac{1}{10} \min\{1, \frac54-\a \},\ \ b> \frac{10^4}{\varepsilon}, \ \
	0<\beta<\frac{1}{100b^{3}}.
\end{align}
Then analogous arguments to the ones in Sections~\ref{Sec-Lambda-NSE}-\ref{Sec-prf-thm}
lead to similar results in Theorem~\ref{Thm-Non-SNS-Deter} for the hyper-dissipative NSE \eqref{equa-hyperNSE} 
up to the Lions exponent.

\medskip 
$(iii)$ {\it Sharpness of the Lions exponent for stochastic NSE.} 
It is also worthwhile to consider the stochastic hyper-dissipative NSE 
\begin{equation}  \label{equa-hyperNSE-stocha}
	 \d v + (\nu (-\Delta)^\alpha v  +(v\cdot \nabla )v   + \nabla P) \d t = \d W_t, 
\end{equation}
under the incompressibility condition. 
On the one hand, 
it was proved in \cite{BJLZ23} 
that equation \eqref{equa-hyperNSE-stocha} is well-posed 
in $L^2(\Omega; C([0,T]; L^2_\sigma))$ when $\alpha \geq 5/4$. 
On the other hand, 
our strategy  also yields 
the strong non-uniqueness of continuous energy solutions
to \eqref{equa-hyperNSE-stocha} 
when $\alpha \in [1,5/4)$.  
Thus, the Lions exponent $5/4$ still serves as the sharp threshold for the
well-posedness of probabilistically strong solutions in the continuous energy class.

Let us also mention that,
for probabilistically weak/martingale solutions in the regular Leray-Hopf class,
the sharpness of Lions exponent for the corresponding well-posedness has been recently proved in \cite{BJLZ23}
in the case of external forces, 
based on the self-similarity method in \cite{ABC21,ABC22}.

\medskip 
$(iv)$ {\it Other viscous models.} The new backward convex integration scheme employing the $\Lambda$-NSE approximation 
applies to other viscous models as well.
For instance,
this method also applies to the classical viscous and resistive magnetodrodynamic (MHD) equations,
and one can derive the strong non-uniqueness
for velocity and magnetic solutions in the continuous energy class. 
It is worth noting that 
MHD has a strong coupling between velocity and magnetic fields,
the nonlinearity is symmetric in the NS component, 
but is anti-symmetric in the Maxwell component. 
Thus, 
the geometry of MHD equations is quite different from that of NSE, 
and requires new constructions of convex integration scheme.
This will be done in a forthcoming work.

\bigskip
{\bf Notations.} For $p\in [1,\infty]$ and $s\in \R$, we set
\begin{align*}
	L^p_x:=L^p(\T^3),\quad W^{s,p}_x:=W^{s,p}(\T^3),\quad H^s_x:=H^s(\T^3).
\end{align*}
We denote by $C([0,T];\mathbb{X})$ the space of continuous functions from $[0,T]$ to $\mathbb{X}$, where $\mathbb{X}$ is a Banach space,
equipped with the norm $\|u\|_{C_T\mathbb{X}}:=\sup_{t\in [0,T]}\|u(t)\|_\mathbb{X}$.
More generally,
for any $I\subseteq [0,T]$,
we write
$\|u\|_{C_I\mathbb{X}} := \sup_{t\in I}
\|u(t)\|_{\mathbb{X}}$.
For $N\in \mathbb{N}_+$ we set
\begin{align*}
	\norm{u}_{C_{T,x}^N}:=\sum_{0\leq m+|\zeta|\leq N}
	\norm{\p_t^m \na^{\zeta} u}_{C([0,T]\times \mathbb{T}^3)},
\end{align*}
where $\zeta=(\zeta_1,\zeta_2,\zeta_3)$ denotes the multi-index
and $\na^\zeta:= \partial_{x_1}^{\zeta_1} \partial_{x_2}^{\zeta_2} \partial_{x_3}^{\zeta_3}$.
Similarly, $C^{\eta}_{[0,T]}:=C^{\eta}([0,T])$, $\eta>0$, denote the temporal H\"older continuous functions.
For $1\leq p<\9$, $1\leq q\leq \9$,  $\eta>0$, $s>0$ we set
\begin{align*}
	& \| u\|_{L^p_{\Omega}C_{T,x}}:= \E(\|u\|_{C_{T,x}}^p )^{\frac1p},\quad
 \| u \|_{L^p_{\Omega}C_T \mathbb{X} }:= \E(  \|u\|_{C_T \mathbb{X}}^p )^{\frac1p},\quad \| u\|_{L^p_{\Omega}C^\eta_{[0,T]} \mathbb{X}}:= \E(  \|u\|_{C^\eta([0,T]; \mathbb{X})}^p )^{\frac1p}.
\end{align*}

Throughout this paper the notation $a\lesssim b$ means that $a\leq C b$ for some absolute constant $C>0$.

\section{Strategy of proof}   \label{Sec-Strategy}

In this section we explain in detail the strategy of the
backward convex integration scheme.
We focus on the more general stochastic NSE \eqref{equa-NS},
as the arguments apply directly to the deterministic NSE \eqref{equa-NS-deter}
by removing the stochastic convolution
and probability moments in the absence of noise. The construction of weak solutions to \eqref{equa-NS}  
is mainly performed on the  finite interval $[0, T]$, 
where $T\in (0,\infty)$, 
the global solutions on $[0,\infty)$ can then be constructed  via gluing arguments.

Since  solutions to the NSE conserve the mean, without loss of generality, we present the proof of Theorems \ref{Thm-Non-SNS-Deter} and \ref{Thm-Non-SNS} in the case where the initial velocity field $v_0$ is of zero mean.

\subsection{$\Lambda$-Navier-Stokes equations} \label{Subsec-Lambda-NSE}

In order to maintain the initial data in the convex integration scheme,
Stokes type solutions were used in \cite{bms21,HZZ22.2}, and fractional Stokes solution were used in \cite{cl23} to generate the initial approximation. Then velocity perturbations were added away from the initial time.
In the stochastic setting, due to the presence of noise
in the relaxed system (see \eqref{equa-nsr} below), the above strategy
gives rise to large Reynolds stress errors near the initial time,
which accumulate to infinity through the iterations.
The resulting solutions thus do not lie in the bounded energy class.

In order to  circumvent this problem,
the key idea in this paper is to use, in the initial step,
solutions to a new approximation of the NSE, which we call the $\Lambda$-NSE, specifically designed to prevent the accumulation of large errors.
More precisely,
we introduce a new type of approximate NSE where only low modes below the frequency $\Lambda$ are advected by the velocity:
\begin{align}\label{equa-nse-2}
	\begin{cases}
		\p_t   u-\nu \Delta  u + \P_{<\Lambda(t)}  ( \P_{<\Lambda(t)}u\cdot \nabla \P_{<\Lambda(t)}u)+\nabla p=0,\\
		\div   u=0,\\
		u(0)=v_0 \in L^2_\sigma.
	\end{cases}
\end{align}
Here $\Lambda: (0,T]\rightarrow \R^+$ is a  strictly decreasing smooth function such that $\Lambda(t) \rightarrow +\infty$ as $t\rightarrow 0^+$, $\Lambda(t) \geq c$ for some constant $c>0$, and
\begin{align}\label{decay-lam}
    \Lambda(t)\leq t^{-\frac18} \quad \text{for} \quad t\in (0, T].
\end{align}
For any $t\in (0,T]$,
the frequency truncated operator $\P_{<\Lambda(t)}$ is defined by
\begin{align}
	\widehat{\P_{<\Lambda(t)}u}(\xi):= \hat u(\xi)\varphi_{\Lambda(t)}(\xi), \ \ \xi\in \mathbb{Z}^3,
\end{align}
where $\varphi_{\Lambda(t)}(\xi):= \varphi(\xi/\Lambda(t))$ for some smooth radial function $\varphi: \R^3 \rightarrow [0,1]$ such that
\begin{align*}
	\varphi(x)=\begin{cases}
		1,\quad |x|\in [0,1),\\
		0,\quad |x|\in [2,+\infty).\\
	\end{cases}
\end{align*}

We note that the $\Lambda$-NSE is different from the usual Galerkin-NSE
as the frequency truncation $\Lambda$ changes in time.
Solutions to the $\Lambda$-NSE
enjoy several useful properties including the energy balance,
smoothness for positive times, and the vanishing of nonlinearity
on high modes at the initial time. The latter one is due to the fact that the trancation frequency $\Lambda(t)$ blows up at the origin, making the approximation more and more precise.
These properties are summarized in Theorem~\ref{thm-fns} below.

\begin{theorem} [Global well-posedness of $\Lambda$-NSE]
\label{thm-fns}
Given any divergence-free and {mean-free} initial datum $v_0\in L^2_x$, $\P$-a.s. such that 
 \begin{align}
     \|v_0\|_{L_x^2}\leq M, 
 \end{align}
  for some deterministic constant $M\geq 1$, the following hold:
 \begin{itemize}
    \item[(i)] 
    $\mathbb{P}$-a.s. there exists a global solution $\wt u$ to \eqref{equa-nse-2} which is adapted and smooth for positive times, continuous at the initial time:
 \begin{align}\label{s-con-v0}
\wt u(t) \rightarrow v_0 \quad \text { strongly in}\quad  L^2_x\quad \text{as}\quad t\to 0^+,
\end{align}
and satisfies the energy balance
	\begin{align}\label{eq-e-2}
		\frac12 \|\wt u(t)\|_{L^2_x}^2 + \nu \int_{0}^{t}\|\nabla \wt u(s)\|_{L^2_x}^2\d s = \frac12 \|v_0\|_{L^2_x}^2, \qquad t\in [0,T].
	\end{align} 
Moreover, for any $1\leq \rho <\infty$, it holds
\begin{align}\label{urho-con}
  \|\wt u -v_0 \|_{ L^\rho({\Omega}; C([0,T_*];L^2_x))} \rightarrow 0\quad \text{as}\quad T_*\rightarrow 0^+.
\end{align}

\item[(ii)] Any 
probabilistically strong and analytically weak solution $v$ to \eqref{equa-nse-2} satisfying the energy inequality
\begin{align}\label{eq-ine-2}
		\frac12 \|v(t)\|_{L^2_x}^2 + \nu \int_{0}^{t}\|\nabla v(s)\|_{L^2_x}^2\d s \leq  \frac12 \|v_0\|_{L^2_x}^2, \quad t\in [0,T],\ \ \mathbb{P}-a.s., 
	\end{align}
coincides with $\wt u$. In particular, the solution $\wt u$ is unique in the Leray-Hopf class.
\item[(iii)] The nonlinearity on high modes  vanishes
at the initial time, i.e.,
for any $1\leq \rho <\infty$, it holds
\begin{align}\label{deacy-non}
\|\P_{\geq \Lambda}(\P_{< \Lambda}\wt u\mathring \otimes \P_{< \Lambda}\wt u)\|_{L^{\rho}({\Omega}; C([0,T_*]; L^1_x))}   \to 0,
\ \ \text{as}\ \ T_*\to 0^+.
\end{align}
 \end{itemize}
\end{theorem}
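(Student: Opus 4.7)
My plan for (i) is to solve \eqref{equa-nse-2} pathwise in $\omega$: the equation carries no stochastic forcing, so adaptedness will follow automatically since the solution is a measurable function of the $\mathcal{F}_0$-measurable datum $v_0$. The crucial structural observation is that the truncated nonlinearity $N(t,u):=\mathbb{P}_{<\Lambda(t)}(\mathbb{P}_{<\Lambda(t)}u\cdot\nabla \mathbb{P}_{<\Lambda(t)}u)$ is supported on Fourier modes below $2\Lambda(t)$, so Bernstein's inequality yields
\begin{equation*}
\|N(t,u)-N(t,v)\|_{L^2_x}\lesssim \Lambda(t)^{5/2}(\|u\|_{L^2_x}+\|v\|_{L^2_x})\|u-v\|_{L^2_x};
\end{equation*}
by \eqref{decay-lam}, $\Lambda(t)^{5/2}\leq t^{-5/16}$ is integrable on $[0,T]$. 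A Banach contraction on the Duhamel form
\begin{equation*}
\wt u(t)=e^{\nu t\Delta}v_0-\int_0^t e^{\nu(t-s)\Delta}\mathcal{P}_\sigma N(s,\wt u(s))\,\mathrm{d}s,
\end{equation*}
with $\mathcal{P}_\sigma$ absorbing the pressure, then produces a local $L^2_x$-valued solution, and testing the equation against $\wt u$ yields the energy balance \eqref{eq-e-2} because the trilinear term cancels by self-adjointness of $\mathbb{P}_{<\Lambda(t)}$ together with divergence-freeness of $\mathbb{P}_{<\Lambda(t)}\wt u$. The resulting a priori bound $\|\wt u\|_{L^\infty_t L^2_x}\leq \|v_0\|_{L^2_x}$ prevents blow-up and delivers a global solution, whose smoothness on $(0,T]$ follows by bootstrapping the Duhamel formula on subintervals $[t_0,T]$ where $\Lambda$ is bounded. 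Strong continuity at $t=0$ is obtained by combining the weak continuity $\wt u(t)\rightharpoonup v_0$ (from the equation and the uniform $L^2$ bound) with the norm convergence $\|\wt u(t)\|_{L^2_x}\to \|v_0\|_{L^2_x}$ delivered by \eqref{eq-e-2}; the pathwise bound $\|\wt u(t)-v_0\|_{L^2_x}\leq 2M$ then upgrades this to \eqref{urho-con} by dominated convergence in $\omega$.

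For (ii), given any $v$ satisfying \eqref{eq-ine-2}, I would set $w:=\wt u-v$ and, invoking the smoothness of $\wt u$ on $(0,T]$ to legitimize the test function through standard Lions–Magenes / Serrin-type weak-strong arguments, derive
\begin{equation*}
\tfrac{1}{2}\tfrac{\mathrm{d}}{\mathrm{d}t}\|w\|_{L^2_x}^2+\nu\|\nabla w\|_{L^2_x}^2\leq \|\nabla \mathbb{P}_{<\Lambda(t)}\wt u\|_{L^\infty_x}\|w\|_{L^2_x}^2,
\end{equation*}
where the divergence-free cancellation $\langle \mathbb{P}_{<\Lambda}v\cdot\nabla \mathbb{P}_{<\Lambda}w,\mathbb{P}_{<\Lambda}w\rangle=0$ is the essential input. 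Bernstein gives $\|\nabla \mathbb{P}_{<\Lambda(t)}\wt u\|_{L^\infty_x}\lesssim \Lambda(t)^{5/2}\|v_0\|_{L^2_x}\lesssim t^{-5/16}\|v_0\|_{L^2_x}$, which is integrable on $[0,T]$, so Gronwall's inequality forces $w\equiv 0$.

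For (iii), I would decompose
\begin{equation*}
\mathbb{P}_{<\Lambda}\wt u\,\mathring\otimes\,\mathbb{P}_{<\Lambda}\wt u=v_0\mathring\otimes v_0+v_0\mathring\otimes(\mathbb{P}_{<\Lambda}\wt u-v_0)+(\mathbb{P}_{<\Lambda}\wt u-v_0)\mathring\otimes v_0+(\mathbb{P}_{<\Lambda}\wt u-v_0)\mathring\otimes(\mathbb{P}_{<\Lambda}\wt u-v_0).
\end{equation*}
The projection $\mathbb{P}_{<\Lambda}$ is convolution with a rescaled kernel whose $L^1_x$ norm is bounded uniformly in $\Lambda$, hence $\mathbb{P}_{\geq\Lambda}=I-\mathbb{P}_{<\Lambda}$ is uniformly bounded on $L^1_x$. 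The three perturbative terms therefore contribute at most $C(\|v_0\|_{L^2_x}\|\mathbb{P}_{<\Lambda}\wt u-v_0\|_{L^2_x}+\|\mathbb{P}_{<\Lambda}\wt u-v_0\|_{L^2_x}^2)$ in $L^1_x$, which tends to $0$ as $t\to 0^+$ by \eqref{s-con-v0} together with $\|\mathbb{P}_{<\Lambda(t)}v_0-v_0\|_{L^2_x}\to 0$. The remaining term $\mathbb{P}_{\geq\Lambda(t)}(v_0\mathring\otimes v_0)$ vanishes in $L^1_x$ as $\Lambda(t)\to\infty$ because $\{\mathbb{P}_{<\Lambda}\}_{\Lambda}$ is a standard $L^1_x$ approximate identity on $\mathbb{T}^3$. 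The uniform pathwise bound $\lesssim M^2$ then lifts this pointwise vanishing to \eqref{deacy-non} via dominated convergence in $\omega$.

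The main technical obstacle is the $t\to 0^+$ limit in (iii): Fourier projections are generally unbounded on $L^1$, so the argument depends crucially on the \emph{specific} form of $\mathbb{P}_{<\Lambda}$ as a smooth rescaled mollifier, which supplies both a uniform $L^1$ bound and an approximate-identity property. A secondary delicate point is the tight matching between the Bernstein exponent $5/2$ and the exponent $1/8$ in \eqref{decay-lam}: this is exactly what renders $\Lambda(t)^{5/2}\leq t^{-5/16}$ integrable near $0$, and that integrability is the backbone of both the global well-posedness in (i) and the uniqueness in (ii).
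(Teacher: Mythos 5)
Your proposal is correct, but it reaches Theorem \ref{thm-fns} by a genuinely different route in parts (i) and (iii). For (i) the paper runs a Galerkin scheme, proves the energy balance for the approximations, passes to the limit via the Aubin--Lions lemma, and gets smoothness for positive times from the Fourier-side observation that modes $|\xi|>2\Lambda(t)$ evolve by the pure heat flow after time $t$; you instead solve the Duhamel equation by a Banach fixed point in $C([0,\tau];L^2_\sigma)$, using that the truncated nonlinearity is Lipschitz on $L^2_x$ with constant $\lesssim\Lambda(t)^{5/2}\le t^{-5/16}$, integrable by \eqref{decay-lam}. This avoids compactness, gives uniqueness of the mild solution for free, and in fact already yields $\wt u\in C([0,\tau];L^2_x)$ with $\wt u(0)=v_0$, so your later derivation of \eqref{s-con-v0} from \eqref{eq-e-2} plus weak continuity is redundant rather than circular; the bootstrap on $[t_0,T]$, where the nonlinearity is frequency-localized below $2\Lambda(t_0)$, recovers the same smoothness as the paper's high-mode heat-flow argument, and \eqref{urho-con} follows by dominated convergence exactly as in the paper. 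For (ii) both arguments are the standard weak--strong/Gronwall scheme built on the cancellation of the truncated trilinear term; the paper closes with $\|\P_{<\Lambda(s)}u\|_{H^2_x}^2\in L^1_t$ (see \eqref{Lambda-uH2-esti}) after absorbing $\tfrac{\nu}{2}\|\nabla w\|_{L^2_x}^2$, while you put the derivative on $\wt u$ and use $\|\nabla\P_{<\Lambda(t)}\wt u\|_{L^\infty_x}\lesssim t^{-5/16}M\in L^1_t$, which dispenses with the absorption — equivalent in substance, with the one cosmetic caveat that, since $v$ only satisfies the energy \emph{inequality} \eqref{eq-ine-2}, the estimate should be kept in integrated form (as in the paper) rather than as a pointwise $\tfrac{\d}{\d t}\|w\|_{L^2_x}^2$ identity. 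For (iii) the paper splits $\P_{<\Lambda}\wt u=\P_{<\frac{\Lambda}{6}}\wt u+\P_{[\frac{\Lambda}{6},2\Lambda]}\wt u$ and exploits the support fact $\P_{\geq\Lambda}(\P_{<\frac{\Lambda}{6}}\wt u\mathring\otimes\P_{<\frac{\Lambda}{6}}\wt u)=0$, reducing \eqref{deacy-non} to \eqref{u-l2-decay}; you instead linearize around $v_0$ and use that $\P_{<\Lambda}$ is a uniformly $L^1_x$-bounded approximate identity (valid because $\varphi$ is smooth and $\Lambda(t)\ge c>0$), so the fixed function $v_0\mathring\otimes v_0\in L^1_x$ loses its high modes as $\Lambda(t)\to\infty$ while the error terms are controlled by $\|\P_{<\Lambda}\wt u-v_0\|_{L^2_x}$. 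Both versions rest on the same two inputs — uniform $L^1_x$-boundedness of the projections and strong continuity at $t=0$, with the monotonicity of $\Lambda$ making the convergence uniform over $t\in[0,T_*]$ — and both lift the pathwise statement to $L^\rho_\Omega$ by dominated convergence, so your approximate-identity variant is a valid substitute for the paper's frequency-support argument.
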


The proof of Theorem \ref{thm-fns} will be given in Section \ref{Sec-Lambda-NSE} later.

Let us mention that
the decay property \eqref{deacy-non}
is essential to reduce the initial Reynolds stress at the initial time.
More precisely,
if we choose the $\Lambda$-NSE
to initialize the convex integration construction, then
the corresponding Reynolds stress $\rr_0$ mainly has two components:
the high modes of the nonlinearity, and the noise part.
Since the Wiener process starts from zero, the noise part vanishes at the initial time. Then the decay property \eqref{deacy-non} ensures that the rest of the Reynolds stress vanishes at the initial time as well
(see \eqref{pro-r0} below).
Then it  is crucial to select suitable backward time sequence later in 
the convex integration scheme, 
in order to achieve the desired main iteration and, simultaneously,
preserve the initial data in the iterative construction.

\subsection{Relaxed random Navier-Stokes-Reynolds system}
Solutions to stochastic NSE
are taken in the probabilistically strong
and analytically weak sense below.

\begin{definition} \label{weaksolu}
Given any weakly divergence-free and
{mean-free} initial datum $v_0 \in L^2_\sigma$, {$\P$-a.s.},
we say that $v$ is a probabilistically strong and analytically weak solution
to the stochastic NSE \eqref{equa-NS},
if $\P$-a.s.
\begin{itemize}	
	\item For all $t\in[0,T]$, $ v(t,\cdot) $ is divergence free in the sense of distributions and has zero spatial mean.
	\item $v \in C([0,T];  L^2(\T^3))$
	is a continuous $\{\mathcal{F}_t\}_{t\geq 0}$-adapted process in $L^2(\T^3)$.
	\item $v$ solves \eqref{equa-NS} in the sense of distributions, i.e., for any divergence-free function $\varphi \in C_0^{\infty}(\mathbb{T}^3)$, it holds
		\begin{align*}
		\int_{\mathbb{T}^3} v(t)\cdot\varphi\, \d x= \int_{\mathbb{T}^3} v_0\cdot\varphi \,\d x+\int_0^t\int_{\mathbb{T}^3} v\cdot ( \nu \Delta \varphi+ (v\cdot \nabla )\varphi ) \, \d x\d s  +  \int_{\mathbb{T}^3}  W_t \cdot\varphi\, \d x
		\end{align*}
     for each $0\leq t\leq T$.
	\end{itemize}
\end{definition}

We use the random shift
\begin{align}  \label{u-vz-shift}
    u:=v-z
\end{align}
to reformulate equation \eqref{equa-NS}
in the new form
\begin{equation}   \label{equ-sns}
	\left\{\aligned
	&\p_t u-\nu \Delta u + \div((u+z)\otimes(u+z))+\nabla P=0, \\
	&\div\, u = 0,\\
	& u(0)=v_0.
	\endaligned
	\right.
\end{equation}
Here $z$ is the stochastic convolution
$z(t):=\int_{0}^{t}e^{\nu(t-s)\Delta } \d W_s$,
that is, the solution to the linear stochastic equation
with zero initial condition
\begin{equation}\label{spde2}
	\left\{\aligned
	&\d z-\nu\Delta z\d t=\d W_t,\\
	&\div\, z=0,\\
	&z(0)=0.
	\endaligned
	\right.
\end{equation}

Note that in the deterministic case,
the stochastic convolution $z$ can be removed,
and thus the shifted equation \eqref{equ-sns}
is exactly NSE \eqref{equa-NS-deter}.

Let us recall the following regularity result for the stochastic convolution term.

\begin{proposition} (\cite[Proposition 3.1]{HZZ22.2})\label{Prop-noise}
	Suppose that $tr(GG^*)<\infty$. Then, for any $\delta\in (0,1/2)$ and $p\geq 2$, we have
	\begin{align}\label{bdd-ew}
	 \mathbf{E} (\|z(t)\|_{C_TH_x^{1-\delta}}^p
          +\|z\|_{C_T^{\frac 12-\delta} L_x^2}^p )\leq (p-1)^{\frac p2}L^p,
	\end{align}
	where $L(\geq 1)$ depends only on $tr(GG^*)$, $T$ and $\delta$. Moreover,
    we have
		\begin{align}\label{bdd-z-l2}
		\mathbf{E}
   \|z(t)\|_{C_tL^2_x}^p
      \leq C_*T^{\frac {(1-\delta)p}{2}},
	\end{align}
where $C_*$ is a universal constant depending only on $\delta$, $p$ and the semigroup $\{e^{\nu t\Delta}\}$.
\end{proposition}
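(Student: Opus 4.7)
This is a classical regularity result for mild solutions of linear stochastic heat equations driven by a trace-class Wiener process. The plan is to combine the It\^o isometry with the smoothing properties of the heat semigroup $\{e^{\nu t \Delta}\}$, a Gaussian moment comparison, and the Da Prato--Kwapie\'n--Zabczyk factorization method, so as to upgrade pointwise-in-$t$ $L^2(\Omega)$-estimates to supremum-in-$t$ $L^p(\Omega)$-estimates.

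First, for fixed $t\in [0,T]$ the variable $z(t)$ is centered Gaussian in $L^2_\sigma$, and the It\^o isometry gives
\begin{align*}
    \E\|z(t)\|_{H^{1-\delta}_x}^2
    = \int_0^t \|(-\Delta)^{(1-\delta)/2} e^{\nu(t-s)\Delta} G\|_{\mathrm{HS}(U, L^2_x)}^2 \d s.
\end{align*}
The standard parabolic smoothing bound $\|(-\Delta)^{(1-\delta)/2} e^{\nu r \Delta}\|_{L^2_x \to L^2_x} \lesssim r^{-(1-\delta)/2}$ combined with $\mathrm{tr}(GG^*)<\infty$, and the integrability of $r\mapsto r^{-(1-\delta)}$ on $(0,T]$ (since $\delta\in(0,1/2)$), yields a pointwise-in-$t$ control of the correct order. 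To pass to the supremum over $[0,T]$ and from $L^2(\Omega)$ to $L^p(\Omega)$, I would invoke the factorization identity
\begin{align*}
    z(t) = \frac{\sin(\pi \alpha)}{\pi} \int_0^t (t-s)^{\alpha -1} e^{\nu(t-s)\Delta} Y(s) \d s, \qquad Y(s) := \int_0^s (s-r)^{-\alpha} e^{\nu(s-r)\Delta}\d W_r,
\end{align*}
for a suitable $\alpha\in(0,1/2)$. Young's convolution inequality then yields the pathwise bound $\|z\|_{C_T H^{1-\delta}_x} \lesssim \|Y\|_{L^p(0,T;H^{1-\delta}_x)}$ for sufficiently large $p$, and Fubini together with the Gaussianity of $Y$ delivers the first term in \eqref{bdd-ew}; the prefactor $(p-1)^{p/2}$ comes from the standard comparison $\E\|X\|^p \lesssim (p-1)^{p/2} (\E\|X\|^2)^{p/2}$ valid for centered Gaussian vectors $X$.

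For the temporal H\"older regularity and the decay bound \eqref{bdd-z-l2}, I would estimate increments through the splitting
\begin{align*}
    z(t) - z(s) = (e^{\nu(t-s)\Delta} - I) z(s) + \int_s^t e^{\nu(t-r)\Delta}\d W_r,
\end{align*}
controlling the first term via $\|(e^{\nu\tau\Delta}-I)f\|_{L^2_x} \lesssim \tau^{(1-\delta)/2}\|f\|_{H^{1-\delta}_x}$ combined with the spatial bound above, and the second via It\^o's isometry. After the Gaussian upgrade this produces $\E\|z(t)-z(s)\|_{L^2_x}^p \lesssim |t-s|^{(1-\delta)p/2}$ for any $p\ge 2$, so the Kolmogorov continuity criterion yields the $C^{1/2-\delta}_{[0,T]}L^2_x$ piece of \eqref{bdd-ew}. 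Using $z(0)=0$ together with the H\"older norm,
\begin{align*}
    \|z\|_{C_t L^2_x} \le t^{(1-\delta)/2} \|z\|_{C^{(1-\delta)/2}_{[0,t]} L^2_x},
\end{align*}
and taking $p$-th moments (after a harmless re-parametrization of $\delta$) gives the decay \eqref{bdd-z-l2}.

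The main technical subtlety is balancing the exponents in the factorization step: the parameter $\alpha$ must be large enough that the convolution kernel $(t-s)^{\alpha-1}$ lies in $L^{p/(p-1)}(0,T)$, yet small enough that $\E\|Y(s)\|_{H^{1-\delta}_x}^2$ stays integrable in $s$. Once $\alpha$ is fixed, the remaining steps reduce to standard parabolic smoothing bounds, the Hilbert--Schmidt hypothesis on $G$, and Gaussian moment computations.
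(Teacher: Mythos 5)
The paper does not prove this proposition at all: it is quoted verbatim from \cite[Proposition 3.1]{HZZ22.2} and used as a black box, so there is no internal proof to compare against. Your sketch is the standard argument for such stochastic-convolution bounds and is essentially correct: It\^o isometry plus the smoothing bound $\|(-\Delta)^{(1-\delta)/2}e^{\nu r\Delta}\|_{L^2_x\to L^2_x}\lesssim r^{-(1-\delta)/2}$ and $\mathrm{tr}(GG^*)<\infty$ give the pointwise second moments; the Da Prato--Zabczyk factorization with $1/p<\alpha<\delta/2$ upgrades to $C_TH^{1-\delta}_x$; the increment splitting $z(t)-z(s)=(e^{\nu(t-s)\Delta}-I)z(s)+\int_s^te^{\nu(t-r)\Delta}\d W_r$ plus Kolmogorov gives the $C^{1/2-\delta}_TL^2_x$ piece; and $z(0)=0$ together with the H\"older seminorm of order $\frac{1-\delta}{2}$ (i.e.\ \eqref{bdd-ew} applied with $\delta/2$ in place of $\delta$, as you note) gives \eqref{bdd-z-l2}.

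Two small points are worth making explicit. First, the factorization/Kolmogorov constraints only cover $p>2/\delta$ directly; the passage to all $p\geq 2$ with the stated $(p-1)^{p/2}$ growth should be phrased via Gaussian moment equivalence for $z$ viewed as a Gaussian random variable in the Banach spaces $C_TH^{1-\delta}_x$ and $C^{1/2-\delta}_TL^2_x$ (you gesture at this, but it is the cleanest way to get the uniform-in-$p$ constant $L$). Second, your derivation of \eqref{bdd-z-l2} produces a prefactor depending on $\mathrm{tr}(GG^*)$ and on the fixed global horizon through $L_{\delta/2}$, whereas the quoted statement claims $C_*$ depends only on $\delta$, $p$ and the semigroup; this literal discrepancy is inherited from the citation rather than a flaw in your argument, and it is harmless for the way \eqref{bdd-z-l2} is used in the paper (e.g.\ in \eqref{r0-est-1}), where only uniformity of the constant as the small time horizon $T_*\to 0^+$ matters, which your bound does provide.
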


Let $M\geq 1$ be the given deterministic constant in Theorem \ref{Thm-Non-SNS} such that 
\begin{align}\label{ini-bdd}
    \|v_0\|_{L^2_x}\leq M,\ \ \mathbb{P}-a.s. 
\end{align}

Consider the following relaxed random Navier-Stokes-Reynolds system
for each integer $q\in \mathbb{N}$,
\begin{equation}\label{equa-nsr}
	\left\{\aligned
	&\p_t \u-\nu \Delta \u+ \div((\u+\zq)\otimes(\u+\zq))+\nabla P_q=\div \ru,  \\
	&\div \u = 0,\\
	& \u(0)=v_0.
	\endaligned
	\right.
\end{equation}
Here, $\ru$ represents the Reynolds stress which is a symmetric traceless $3\times 3$ matrix,
and $z_q$ is the frequency truncated version of noise
defined by
\begin{align}\label{def-ziq}
 z_{q}:=\mathbb{P}_{\leq \lambda_{q+1}^{\frac{\ve}{8}}}z.
\end{align}
where $\lambda_{q+1}$ is the frequency parameter given by \eqref{def-la0}-\eqref{def-laq} and $\ve$ is a small parameter given by \eqref{b-beta-ve}.

Note that, by the Sobolev embedding, one has
\begin{align}\label{est-zq}
 \| z_{ q} \|_{L^{p}_{\Omega} C_TL^{\infty}_x} \leq(p-1)^{\frac 12} L\lambda_{q+1}^{\frac{\ve}{8}},
\quad\| z_{ q} \|_{L^{p}_{\Omega} C_T^{\frac12-\delta}  L^{\infty}_x}
 \leq (p-1)^{\frac 12} L \lambda_{q+1}^{\frac{\ve}{4}} .
\end{align}
For every $q\geq  3$, define the amplitude parameter $\delta_{q}$ by 
\begin{equation}\label{delta}
	\delta_{q} :=10^{-3q}.
\end{equation}
We also will choose $\delta_0$, $\delta_1$ and $\delta_2$ later.

\subsection{Backward convex integration scheme}   \label{Subsec-back-CI}
The backward convex integration scheme
includes careful selections of initial relaxed solution,
backward time intervals and energy profiles.

\subsubsection{\bf Selection of initial relaxed solutions} 
As mentioned above,  
we choose the solution $\wt u$ to $\Lambda$-NSE \eqref{equa-nse-2}
as the initial solution to the relaxed random Navier-Stokes-Reynolds system \eqref{equa-nsr}.
Namely, we let 
\begin{align}\label{def-inistep}
    u_0 := \wt u
\end{align}
and derive the corresponding Reynolds stress
\begin{align}
	&\mathring{R}_0 := \P_{\geq \Lambda }(\P_{< \Lambda }\wt u\mathring \otimes \P_{< \Lambda }\wt u)+ (\P_{< \Lambda }\wt u\mathring \otimes \P_{\geq \Lambda}\wt u )+ (\P_{\geq\Lambda }\wt u\mathring \otimes \wt u)+\wt u\mathring \otimes z_0+ z_0\mathring \otimes \wt u+  z_0\mathring\otimes z_0 ,   \label{r0b}
\end{align}
and the pressure term
\begin{align}
    P_0 = p-\frac{1}{3} ( \P_{\geq \Lambda }(|\P_{<\Lambda }\wt u|^2)-|\wt u|^2 +2\wt u\cdot z_0 +| z_0|^2),
\end{align}
where $z_0$ is the frequency truncated stochastic convolution given by \eqref{def-ziq}
with $q=0$.

The important fact is that the corresponding Reynolds stress $\mathring{R}_0$ tends to zero as $t\rightarrow 0^+$.
To be precise, 
by the structure of the initial Reynolds stress in \eqref{r0b},
for any $T_*\in (0,T)$ and $\delta\in (0,1/6)$,
\begin{align}\label{r0-est-1}
	\|\mathring{R}_{0}\|_{L^r_{\Omega}C_{[0,T_*]} L^{1}_x} &\leq \|\P_{\geq \Lambda}(\P_{< \Lambda}\wt u\mathring \otimes \P_{< \Lambda}\wt u) \|_{L^r_{\Omega}C_{[0,T_*]}L^{1}_x} +\|(\P_{< \Lambda}\wt u\mathring \otimes \P_{\geq \Lambda}\wt u )\|_{L^r_{\Omega}C_{[0,T_*]}L^{1}_x} \notag\\
	&\quad + \|(\P_{\geq\Lambda}\wt u\mathring \otimes \wt u) \|_{L^r_{\Omega}C_{[0,T_*]}L^{1}_x} +  \|\wt u \|_{L^{2r}_{\Omega}C_{[0,T_*]}L^{2}_x} \| z \|_{L^{2r}_{\Omega} C_{[0,T_*]}L^{2}_x}+ \| z\|_{L^{2r}_{\Omega}C_{[0,T_*]}L^{2}_x}^{2}   \\
	&\lesssim \|\P_{\geq \Lambda}(\P_{< \Lambda}\wt u\mathring \otimes \P_{< \Lambda}\wt u) \|_{L^r_{\Omega}C_{[0,T_*]}L^{1}_x}+\|\wt u \|_{L^{2r}_{\Omega}C_{[0,T_*]}L^{2}_x}\|\P_{ \geq \Lambda} \wt u \|_{L^{2r}_{\Omega}C_{[0,T_*]}L^2_x}\notag\\
 &\quad+ T_*^\frac{1-\delta}{2} 
 \|\wt u\|_{L_\Omega^{2r}C_{[0,T_*]}L^{2}_x} + T_*^{ 1-\delta},  \notag
\end{align}
where we used the moment bound \eqref{bdd-z-l2} 
in the last step.
Note that, for any $t\in (0,T_*]$,
\begin{align*}
    \|\P_{ \geq \Lambda(t)} \wt u(t)\|_{L^2_x}
    \leq& \|\wt u(t) - v_0 \|_{L^2_x} +  \|\P_{ \geq \Lambda(t)} v_0\|_{L^2_x} \notag  \\
     =&  \|\wt u(t) - v_0 \|_{L^2_x}
           +  \|(1 - \varphi_{\Lambda(t)}) \widehat{v_0} \|_{L^2_\xi} \notag \\
     \leq& \|\wt u(t) - v_0 \|_{L^2_x}
           +  \|(1 - \varphi_{\Lambda(T_*)}) \widehat{v_0} \|_{L^2_\xi},
\end{align*}
where the last step is due to the fact that
$\Lambda$ and $\varphi$ decrease on $(0,T_*]$,
and thus $0\leq 1 - \varphi_{\Lambda(t)} \leq 1 - \varphi_{\Lambda(T_*)}$
for $t\in (0,T_*]$.
Hence, using \eqref{urho-con}
and the fact $1 - \varphi_{\Lambda(T_*)} \to 0$ as $T_*\to 0^+$
we obtain for any $\rho\in[1,\infty)$ fixed, 
\begin{align}\label{u-l2-decay}
    \|\P_{ \geq \Lambda} \wt u\|_{L^{\rho}_{\Omega}C_{[0,T_*]}L^2_x}
    \leq  \|\wt u - v_0 \|_{L^{\rho}_{\Omega}C([0,T_*];L^2_x)}
           +  \|(1 - \varphi_{\Lambda(T_*)}) \widehat{v_0} \|_{L^{\rho}_{\Omega}L^2_\xi}
    \to 0, \ \ \text{as}\ \ T_* \to 0^+.
\end{align} 
Thus, in view of the vanishing of nonlinearity on high modes \eqref{deacy-non} 
and 
the boundedness 
$ \|\wt u\|_{L_\Omega^{\infty}C_{[0,T]}L^{2}_x} 
\leq M$ 
due to the energy balance \eqref{eq-e-2}, 
we derive
the key decay property of Reynolds stress at the initial time
\begin{align}\label{pro-r0}
 \|\mathring{R}_{0}\|_{L^r_{\Omega}C_{[0,T_*]}L^{1}_x} \rightarrow 0\ \ \text{as}\ \ T_*\rightarrow 0^+.
\end{align}

The decay property \eqref{pro-r0} enables us to control the Reynolds stress 
near the initial time in the first iterative step, 
which is different from the previous works \cite{bcv21,bms21, HZZ22.2} 
where the Reynolds stress remains bounded in \cite{bcv21,bms21} 
and explodes at the initial time in \cite{HZZ22.2}.
It allows to select a suitable backward time sequence as well 
so that convex integration constructions can be performed  in a backward manner.

\begin{remark} 
(Regular initial data: improved decay rate of Reynolds stress) 
\label{Rem-decay}
For regular initial data, say, $v_0\in H^3_x$,  $\P$-a.s., 
one has $\wt u\in C_TH^3_x$, $\P$-a.s.
Slightly modifying $\Lambda(t)$ such that 
$\Lambda(t)\geq t^{-\frac{1}{10}}$ in \eqref{decay-lam} and assuming $\|v_0\|_{H^3_x}\leq M$ in \eqref{ini-bdd} for simplicity, 
one can obtain the improved upper bound of the decay estimate
\begin{align} \label{decay-R0-refined}
\|\mathring{R}_{0}\|_{L^r_{\Omega}C_{[0,T_*]}L^{1}_x} \lesssim T_*^{\frac{3}{10}}.
\end{align} 
This fact indeed reveals the deep 
relationship 
between the initial regularity 
and the decay rate of Reynolds stress 
near the initial time. 

To this end, let us assume  $T_*<1$ 
without loss of generality. 
Similarly to \eqref{r0-est-1}, since $\delta\in (0,1/6)$, 
\begin{align} \label{r0-est-1-2}
\|\mathring{R}_{0}\|_{L^r_{\Omega}C_{[0,T_*]} L^{1}_x} 
	&\lesssim \|\P_{\geq \Lambda}(\P_{< \Lambda}\wt u\mathring \otimes \P_{< \Lambda}\wt u) \|_{L^{r}_{\Omega}C_{[0,T_*]}L^{1}_x}+\|\wt u \|_{L^{2r}_{\Omega}C_{[0,T_*]}L^{2}_x}\|\P_{ \geq \Lambda} \wt u \|_{L^{2r}_{\Omega}C_{[0,T_*]}L^2_x}\notag\\
 &\quad+ T_*^\frac{5}{12}\|\wt u\|_{L^{2r}_{\Omega}C_{[0,T_*]}L^{2}_x} + T_*^{\frac56}.  
\end{align} 
Note that, since $\Lambda(t)\geq t^{-\frac{1}{10}}$, 
\begin{align}  \label{r0-est-1-2.1}
    \|\P_{ \geq \Lambda} \wt u \|_{L^{2r}_{\Omega}C_{[0,T_*]}L^2_x}\lesssim \| |\xi|^{-3}(1-\varphi_\Lambda)\|_{C_{[0,T_*]}L^\infty_\xi} \| |\xi|^3 \widehat{\wt u} \|_{L^{2r}_{\Omega}C_{[0,T_*]}L^2_\xi}\lesssim  T_*^{\frac{3}{10}} \|\wt u\|_{L^{2r}_{\Omega}C_{[0,T_*]}H^3_x }.
\end{align}
Concerning the first term on the right-hand side of \eqref{r0-est-1-2}, arguing as in the proof of \eqref{est-non-decay-2} below, 
we derive 
\begin{align}   \label{r0-est-1-2.2}
\|\P_{\geq \Lambda}(\P_{< \Lambda}\wt u\mathring \otimes \P_{< \Lambda}\wt u)\|_{L^{r}_{\Omega}C_{[0,T_*]}L^1_x}
&\lesssim  ( \|\wt u \|_{L^{2r}_{\Omega}C_{[0,T_*]}L^2_x}  \| \P_{[\frac{\Lambda}{6},2\Lambda ]}\wt u\|_{L^{2r}_{\Omega}C_{[0,T_*]}L^2_x}  
+ \| \P_{[\frac{\Lambda}{6},2\Lambda]}\wt u\|_{L^{2r}_{\Omega}C_{[0,T_*]}L^2_x}^2 )\notag\\
&\lesssim T_*^{\frac{3}{10}} \|\wt u \|_{L^{2r}_{\Omega}C_{[0,T_*]}L^2_x} \|\wt u \|_{L^{2r}_{\Omega}C_{[0,T_*]} H^3_x}+ T_*^{\frac{6}{10}} \|\wt u \|_{L^{2r}_{\Omega}C_{[0,T_*]} H^3_x}^2.
\end{align}  
Moreover, taking $\dot H^3$ inner product of \eqref{equa-nse-2} with $\wt u$ and using Kato’s inequality and Poincar\'e's inequality 
we get
\begin{align*}
    \frac12\frac{\d}{\d t}\|\wt u(t)\|_{\dot H^3_x}^2+\nu\| \wt u(t)\|^2_{\dot H^4_x}&\leq \|\nabla \P_{< \Lambda(t)} \wt u(t)\|_{L^\infty_x}\|\P_{< \Lambda(t)} \wt u(t)\|_{H^3_x}^2\lesssim \| \P_{< \Lambda(t)} \wt u(t)\|_{ \dot H^3_x}^3,
\end{align*}
which, via Gronwall’s inequality, yields
\begin{align*}
    \|\wt u(t)\|_{C_T \dot H^3_x}^2\lesssim \|v_0 \|_{\dot H^3_x}^2 \exp\left\{ \int_{0}^T \|\P_{< \Lambda(t)} \wt u(t)\|_{\dot H^3_x} \d t  \right\}.
\end{align*}
Since $\Lambda(t)\leq t^{-\frac18}$ 
for $t\in (0,T]$,  
\begin{align*}  
   \int_0^T \|\P_{<\Lambda(t)}\wt u(t)\|_{\dot H^3_x}  \d t= \int_0^T \| |\xi|^3 \widehat  \varphi_{\Lambda(t)}(\xi) \widehat {\wt u}(t)  \|_{L^2_\xi}  \d t \leq \int_0^T 8\Lambda^3(t) \d t \|u\|_{C_T L^2_x} \lesssim M.
\end{align*}
It follows that 
\begin{align}  \label{r0-est-1-2.3}
\|\wt u(t)\|_{C_T\dot  H^3_x}^2\lesssim e^M \|v_0 \|_{\dot H^3_x}^2\lesssim M^2 e^M,
\quad \P\text{-a.s.},
\end{align}
where the implicit constants are deterministic. 
Thus, plugging \eqref{r0-est-1-2.1}-\eqref{r0-est-1-2.3} into \eqref{r0-est-1-2} we obtain 
the improved estimate \eqref{decay-R0-refined}. 
\end{remark}

\subsubsection{\bf Selection of backward time intervals}\label{subsubsec-tq}

Our next step is to divide the time interval $[0,T]$ into infinitely many sub-intervals
$\{[T_{q+1}, T_q]\}_q$,
where $\{T_q\}$ is a strictly decreasing sequence
satisfying $T_0=T$
and $T_q \to 0^+$ as $q\to \infty$. 
The purpose is to control the Reynolds stress on the whole time interval
for all iterative steps and, simultaneously,
maintain the prescribed initial data. 

In the first step, we add the velocity perturbation on $[T_1,T]$
and use convex integration constructions to decrease the Reynolds error on $[T_1,T]$.
Then, we move to a larger time interval $[T_2,T]$
and decrease the associated Reynolds error.
Continuing this iterative procedure,
we progressively control the
velocity fields and Reynolds errors on $[T_q, T]$, $q\geq 1$, to reach the desired iterative aim.
We note that the initial datum is unchanged in each step,
and thus, $u_q(0)=v_0$.

More delicately,
the backward time sequence $\{T_q\}$ is chosen 
in an iterative way as follows.  

Recall the choice of $\{\delta_q\}_{q\geq 3}$ 
in \eqref{delta}. 
Let $\delta_2$ be a large parameter such that 
\begin{align}  \label{R0-CtL1}
	\|\mathring{R}_{0}\|_{ L^r_{\Omega} C_{[0,T]} L^{1}_x}\leq \frac18c_*\delta_2,
\end{align}
where $c_*>0$ be a small parameter such that 
\begin{align}\label{def-c8}
	0< c_*< \min\left\{1,\frac{\ve_u}{500}\right\},
\end{align}
with $\ve_u$ being the geometric constant given by Lemma \ref{geometric lem 2}. 

Taking advantage of the decay property \eqref{pro-r0} of $\mathring{R}_{0}$
near the initial time, we choose $0<T'_1<T$ and $T_2'\in (0,T'_1/2)$ such that
\begin{align*}
	\|\mathring{R}_{0}\|_{ L^r_{\Omega} C_{[0,T'_1]} L^{1}_x}\leq \frac18c_*\delta_3\quad 
	\|\mathring{R}_{0}\|_{ L^r_{\Omega} C_{[0,T'_2]} L^{1}_x}\leq \frac18c_*\delta_4.
\end{align*}
Then, take $T_1$ and $\ell'_0>0$ such that 
\begin{align*}
	T'_2< T_1:=T'_1-\ell'_0<T'_1,\quad  \ell'_0\leq \frac{T'_1-\ell'_0-T'_2}{20}. 
\end{align*}
We have 
\begin{align} 
	\|\mathring{R}_{0}\|_{ L^r_{\Omega} C_{[0,T_1+\ell'_0]} L^{1}_x}\leq \frac18c_*\delta_3.
\end{align}
Next, we choose $T'_3\in (0,T'_2/2)$ such that 
\begin{align*}
	\|\mathring{R}_{0}\|_{ L^r_{\Omega} C_{[0,T'_3]} L^{1}_x}\leq \frac18c_*\delta_5,
\end{align*}
and $T_2$ and $\ell'_1>0$ such that
\begin{align*}
	T'_3< T_2:=T'_2-\ell'_1<T'_2,\quad  \ell'_1\leq \frac{T'_2-\ell'_1-T'_3}{20}.
\end{align*}
Note that
\begin{align*}
	\ell'_0\leq \frac{T_1-T_2}{20} 
\end{align*}
and
\begin{align}
	\|\mathring{R}_{0}\|_{ L^r_{\Omega} C_{[0,T_2+\ell'_1]} L^{1}_x}\leq \frac18c_*\delta_4.
\end{align}

Proceeding iteratively, for any $q\geq 3$, we choose $T'_{q+1} \in (0,T'_{q}/2)$ such that
\begin{align*}
	\|\mathring{R}_{0}\|_{ L^r_{\Omega} C_{[0,T'_{q+1}]} L^{1}_x}\leq \frac18c_*\delta_{q+3},
\end{align*}
and $T_q$ and $\ell'_{q-1}>0$ such that
\begin{align*}
	T'_{q+1}< T_q:=T'_q-\ell'_{q-1}<T'_q,\quad  \ell'_{q-1}\leq \frac{T'_q-\ell'_{q-1}-T'_{q+1}}{20}.
\end{align*}
We have 
\begin{align}  \label{ell'}
	\ell'_{q-2}\leq \frac{T_{q-1}-T_{q}}{20}, 
\end{align}
and
\begin{align}
	\|\mathring{R}_{0}\|_{ L^r_{\Omega} C_{[0,T_{q}+\ell'_{q-1}]} L^{1}_x}\leq \frac18c_*\delta_{q+2}.
\end{align}
As a consequence, 
we obtain a backward time sequence $\{T_q\}$ and a sequence of small parameters $\{\ell'_q\}$.

Now, let $a$ be a large integer depending on $r$,
$b\in 14\mathbb{N}$ a large integer multiple of $14$,
and $\varepsilon\in \mathbb{Q}_+$ a small parameter,
such that 
\begin{align}\label{b-beta-ve} 
	\varepsilon<10^{-3}\ \text{and}\ b> 100 \varepsilon^{-1}.
\end{align}
Then, for each $q\in \mathbb{N}$, we choose the frequency parameter $\lbb_q$ and mollification parameter $\ell_q$ as follows \begin{align}
	&\lambda_0\geq\max \left\{ a,\, T_2^{-2},\,  (\ell'_0)^{-\frac{1}{30}}  \right\}, \label{def-la0}\\
	&\lambda_q\geq \max \left\{ \lambda_{q-1}^b,\, T_{q+2}^{-2}, \, (\ell'_q)^{-\frac{1}{30}} \right\},\label{def-laq}\\
	&\lambda_q^{\frac17}\in \mathbb{N},\  \ell_q:= \lambda_q^{-30}. \label{def-ell}
\end{align}  
Note that, by the above choice, 
\begin{align}\label{a-big-lambda}
	\ell_{q}\leq \frac{T_{q+1}-T_{q+2}}{20}\ \ \text{and}\ \ 1+T_{q+2}^{-2}\leq \lambda_q,\ \ \forall q\geq 0, 
\end{align}

Let us mention that 
the requirement $\lambda_q \geq T_{q+2}^{-2}$ is to control the growth of the $C_{T,x}^1$-norm of the approximate solutions $u_q$,  
which will be used, 
for instance, 
to prove the iterative estimate \eqref{uc}.  
Moreover, 
the small parameter $\ell'_q$ is incorporated into the definition of $\lambda_q$ to ensure that the mollification and cut-off procedures do not impact the main iteration estimates. We also choose $a$ to be  sufficiently large such that $\lambda_{q}^{-1}\leq \delta_{q+3}$ for every $q\in \mathbb{N}$.
 
Note that by the above construction,  
$\ell_q< \ell'_q$ for all $q\in \mathbb{N}$, 
and so 
\begin{align}
	&\|\mathring{R}_{0}\|_{ L^r_{\Omega} C_{[0,T]} L^{1}_x}\leq \frac18c_*\delta_2, \label{asp-0} \\
	&\|\mathring{R}_{0}\|_{ L^r_{\Omega}C_{[0,T_q+\ell_{q-1}]} L^{1}_x}\leq \frac18c_*\delta_{q+2}. \label{asp-q}
\end{align}

\begin{remark}
(Regular initial data: standard parameters)
\label{Rem-parameter}
	For more regular $H^3_x$ initial data, 
	in view of the improved decay estimate \eqref{decay-R0-refined},
	one can choose the parameters $\lbb_q$,  $\delta_{q}$ and $\ell_q$ as in the usual convex integrations: 
	\begin{equation}\label{la}
		\la=a ^{(b^q)}, \ \
		\delta_{q+3}=\frac12 \lambda_{q+3}^{-2\beta},\ \ \ell_q:=\la^{-30},
	\end{equation}
	where $a$ is a large integer,
	$b\in 2\mathbb{N}$ is a large integer of multiple $2$, $\beta>0$ 
	and $\varepsilon \in \mathbb{Q}_+$ are small parameters
	such that $ \lambda_q^{\frac17}\in \mathbb{N}$
	and
	\begin{align}\label{b-beta-ve-2} 
		b> 10^4 \varepsilon^{-1}, \ \
		0<\beta<\frac{3}{1000b^4}.
	\end{align}
	Then, the backward time can be chosen by 
\begin{align}  \label{Tq-H3}
T_q =\frac12 (\frac18 c_*\delta_{q+ 2})^\frac{10}{3}.
\end{align}

Thanks to the above  explicit choice of parameters, 
we see that for $a$ sufficiently large, but independent of $\{T_q\}$, 
\eqref{a-big-lambda}  
and the inequalities  
$\lbb_0^{-1} \leq (\frac{1}{8}c_*)^{\frac{10}{3}}/ 80$, $ (\frac18 c_*\delta_{q+ 2})^{\frac{10}{3}} /2> \ell_{q-1} $ 
hold. 
Moreover,  by the improved decay estimate \eqref{decay-R0-refined},
	\begin{align*}
		\|\mathring{R}_{0}\|_{L^r_{\Omega}C_{[0,T_q+\ell_{q-1}]}L^{1}_x} \leq  (T_q+\ell_{q-1})^{\frac{3}{10}} \leq \frac18 c_*\delta_{q+ 2}, 
	\end{align*} 
 which verifies \eqref{asp-q}. 
\end{remark}

\subsubsection{\bf Selection of energy profiles}
Regarding the energy of approximate solutions,
we choose a smooth energy profile $e$ and a large enough constant $\delta_1$
such that $e(0)=\E\|v_0\|_{L^2_x}^2$ and
\begin{align}
	\frac34\delta_{2}\leq e(t)-\mathbf{E} \| u_0(t)+z (t)\|_{L^2_x}^2 \leq \delta_{1},\ \
	 & \text{for }\ t\in [T_1-\ell_0,T], \label{asp-0-energy} \\
	 \frac34\delta_{q+2}\leq e(t)-\E\| u_0(t)+z (t)\|_{L^2_x}^2 \leq \delta_{q+1},\ \
      &\text{for }\ t\in [T_{q+1}-\ell_{q},T_q+\ell_{q-1}].   \label{asp-e-q}
\end{align}

The delicate point here is to choose the order $\mathcal{O}(\delta_{q+2})$,
different from the upper bound order $\mathcal{O}(\delta_{q+1})$,
for the lower bound of the energy difference in \eqref{asp-0-energy}-\eqref{asp-e-q}.

As a matter of fact, on the one hand,
the order $\mathcal{O}(\delta_{q+1})$ seems to be the first candidate 
for the lower order.
But this forces the energy difference to be of order
$\mathcal{O}(\delta_{q+1})$ on $[T_{q+1}, T_q]$,
while of order $\mathcal{O}(\delta_{q+2})$ on $[T_{q+2}, T_{q+1}]$.
Since $\delta_{q+2} \ll \delta_{q+1}$,
the resulting energy profile losses continuity and
behaves like step functions on each subintervals
$[T_{q+1}, T_q]$, $q\geq 1$.

On the other hand, in order to apply the Geometric Lemma to derive \eqref{velcancel},
one needs
\begin{align}\label{f-est-e-w}
e(t) -\E(\|u_{q}(t)+z_{q}(t)\|_{L_x^2}^2+\|w_{q+1}(t)\|_{L_x^2}^2) \geq 0,\quad  \forall\ t\in [0,T],\ q\geq0,
\end{align}
where $w_{q+1}$ is the perturbation added on the approximate solution $u_q$ at level $q$.
By the main iterative estimates, the old Reynolds stress
is of order $\mathcal{O}(\delta_{q+2})$ on $[T_{q+1}, T_q]$ and so the $L^2_x$-norm of the perturbation $w_{q+1}$ is at least of order $\mathcal{O}(\delta_{q+2})$ on $[T_{q+1}, T_q]$.
See, e.g., \eqref{asp-q} and \eqref{prin-end}. 
This suggests unreasonable to choose orders strictly lower than $\delta_{q+2}$ for the energy difference on this subinterval. 

It is also worth noting that
the open room between the lower order $\delta_{q+2}$ and
the upper order $\delta_{q+1}$
allows the existence of infinitely many smooth energy profiles
on the whole time interval.

\subsubsection{\bf Main iterative estimates} 
We are now ready to state the main iterative estimates 
in the backward convex integration scheme. 

Fix any $\delta \in (0,1/6)$. 
Assume the following inductive estimates at level $q\in \mathbb{N}$ for every $m\geq 1$: \
\begin{align}
	&  \| u_q \|_{L^{2r}_{\Omega} C_{T}L^2_x}\lesssim  \sum\limits_{n=0}^q \delta_{n}^{\frac12} ,\label{ul2}\\
	&  \| u_q  \|_{ L^{m}_{\Omega}C_{[T_{q+2},T],x}^{1}} \lesssim \lambda_{q}^{4}(12^{q-1}\cdot 8\cdot 6mL^2)^{6\cdot 12^{q-1}},\label{uc} \\
	&  \| \mathring{R}_{q}\|_{L^{r}_{\Omega}C_{T} L^1_x}\leq  c_*\delta_{q+ 2} ,  \label{rl1s}\\
	&  \| \mathring{R}_{q}  \|_{L^{m}_{\Omega} C_{T} L^{1}_x} \lesssim (12^q\cdot 8mL^2)^{12^q},  \label{rl1b}\\
	&  \| \mathring{R}_{q} \|_{L^{m}_{\Omega}C^{\frac12-\delta}_{[T_{q+2},T]}L^1_x}+ \| \mathring{R}_{q}  \|_{L^{m}_{\Omega}C_{[T_{q+2},T]}W^{1-\delta,1}_x} \lesssim \lambda_{q}^5 (12^q\cdot 8mL^2)^{12^q},  \label{rl1b-s}\\
	& \frac12\delta_{q+ 2}\leq e(t)-\E\|(u_q+z_{q})(t)\|_{L^2_x}^2 \leq 2\delta_{q+1},\ \ t\in [T_{q+1},T].  \label{energy-est}
 \end{align}
The above implicit constants are independent of $q$, $m$, $r$ and $L$.

Proposition \ref{Prop-Iterat} below contains the main iterative estimates of
relaxed solutions $(u_q, \mathring{R}_{q})$
which is the heart of the proof of Theorem \ref{Thm-Non-SNS}.

\begin{proposition} [Main iterative estimates]\label{Prop-Iterat}
	There exists a sequence of integers $\{a_q\}$, such that the following holds:
	
	Suppose that for some $q\in \mathbb{N}$, $(\u, \ru)$ solves  \eqref{equa-nsr}
	and satisfies  \eqref{ul2}-\eqref{energy-est}.
	Then, there exists $(u_{q+1}, \mathring{R}_{q+1})$
	which solves \eqref{equa-nsr} and satisfies \eqref{ul2}-\eqref{energy-est} with $q+1$ replacing $q$.
	In addition,  we have
 
	\begin{align}\label{u-B-L2tx-conv}
		\|u_{q+1}-u_{q}\|_{L^{2r}_{\Omega}C_TL^{2}_{x}}\lesssim	 \delta_{q+ 1}^{\frac12}.
	\end{align}
	The implicit constants are independent of $q$, $m$, $r$ and $L$.
\end{proposition}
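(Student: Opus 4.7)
The plan is to implement one intermittent convex integration step adapted to the backward time structure. First I mollify $(\u,\ru)$ in space and one-sidedly in time at scale $\ell_q=\la^{-30}$ to obtain smooth $(u_\ell,\mathring{R}_\ell)$, with mollification errors of order $\ell_q$ that are negligible by \eqref{def-ell} and the separation $\ell_q\le (T_{q+1}-T_{q+2})/20$ from \eqref{a-big-lambda}. The noise $\zq$ is correspondingly replaced by $\zl$. The one-sided time mollification (or direct gluing with $\u$ on $[0,\ell_q]$) preserves the datum $\u(0)=v_0$.

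Next I construct the perturbation $w_{q+1}=u_{q+1}-\u$ as a sum of intermittent jet-type flows concentrated at frequency $\laq$ with parameters $\rp,\rs,\lambda,\mu,\sigma$ as in Section~\ref{subsec-rem}. The amplitudes are furnished by the Geometric Lemma~\ref{geometric lem 2} applied to the defect
\[
\rho(t,x):=\tfrac{1}{3}\bigl(e(t)-\mathbf{E}\|\u+\zq\|_{L^2_x}^2-\gamma(t)\bigr)\cq(t)-(\mathring{R}_\ell+\text{noise cross terms})(t,x),
\]
where $\gamma(t)$ absorbs the expected self-energy of $w_{q+1}$, and $\cq$ is a smooth temporal cutoff supported in $[T_{q+1}-\tfrac12\ell_q,T]$ that equals $1$ on $[T_{q+1},T]$. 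This cutoff guarantees $u_{q+1}(0)=v_0$ and localizes the perturbation to the current iterative window. To invoke Lemma~\ref{geometric lem 2} I need the uniform lower bound $\rho\gtrsim\delta_{q+2}\,\Id$ on the support of $\cq$; this follows by combining the lower bound $\tfrac34\delta_{q+2}$ of \eqref{asp-e-q} with \eqref{rl1s}, the smallness \eqref{def-c8} of $c_*$, and a moment bound absorbing the fluctuation of $\|\u+\zq\|^2_{L^2_x}$ around its expectation.

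With the amplitudes fixed, the principal, incompressibility, and temporal correctors are assembled in the standard way, and the new Reynolds stress $\mathring{R}_{q+1}=\mathcal{R}\mathcal{L}$ is recovered via the inverse divergence $\mathcal{R}$ applied to the error equation for $(u_{q+1},\mathring{R}_{q+1})$. One decomposes $\mathcal{L}$ into the linear error (from $\nu\Delta w_{q+1}$ and $\partial_t$), the oscillation/Nash error (where the principal self-interaction cancels $\mathring{R}_\ell$ up to high-frequency and $\laq^{-1}$-small remainders), corrector and compressibility errors, a noise-interaction error involving $z_{q+1}-\zq$, and an energy-pumping error. Each piece is bounded in $L^r_\Omega C_T L^1_x$ by Calder\'on--Zygmund theory, the intermittent $L^p_x$ estimates for the building blocks, and the moment bounds \eqref{est-zq}, \eqref{bdd-ew}, \eqref{bdd-z-l2}. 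The sharp estimate \eqref{rl1s} at level $q+1$ closes once each summand is absorbed into $c_*\delta_{q+3}/8$ by the parameter choice; the big estimates \eqref{uc}, \eqref{rl1b}, \eqref{rl1b-s} and the energy estimate \eqref{energy-est} are propagated by direct bookkeeping, and \eqref{u-B-L2tx-conv} follows from the $L^2_x$-size $\delta_{q+1}^{1/2}$ of the principal perturbation.

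The main obstacle I anticipate is two-fold. First, the sharp stress bound \eqref{rl1s} tolerates no $q$-dependent factor, yet the noise contributes random factors like $L\laq^{\va/8}$ (see \eqref{est-zq}); these must be swallowed by the gain $\la^{-1}\le\delta_{q+3}$ and by splitting off a small-probability event on which the Wiener process is atypically large, controlled via \eqref{bdd-ew}--\eqref{bdd-z-l2}, which is exactly why the iterative constants in \eqref{uc}, \eqref{rl1b}, \eqref{rl1b-s} are allowed to blow up in $q,m,L$ while the sharp bound is not. Second and more delicate, the temporal cutoff $\cq$ produces an error of size $\|\partial_t\cq\|_\infty\sim\ell_q^{-1}=\la^{30}$ supported in the thin strip $[T_{q+1}-\tfrac12\ell_q,T_{q+1}]$; its $L^1_x$ norm on this strip must be absorbed using the decay \eqref{asp-q} of $\mathring{R}_0$ evaluated at $T_{q+1}+\ell_{q-1}$ together with the iterated bound on $\ru$. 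This is precisely the step where the careful construction of $\{T_q\}$ and $\ell_q$ in Section~\ref{subsubsec-tq} is used, and it dictates the auxiliary shift by $\ell'_q$ appearing in \eqref{asp-q}.
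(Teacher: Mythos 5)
Your overall architecture (one-sided mollification, intermittent jets at frequency $\laq$, temporal cutoff near $T_{q+1}$ to preserve the datum, decomposition of the new stress into linear/oscillation/corrector/commutator/cut-off errors, and absorption of the cut-off error via \eqref{asp-q}) matches the paper. However, there is a genuine gap in your amplitude construction. You define the defect matrix as an energy-gap scalar times the identity minus $\mathring{R}_{\ell_q}$ (plus noise cross terms), and you claim the admissibility condition for the Geometric Lemma~\ref{geometric lem 2} — your ``$\rho\gtrsim\delta_{q+2}\,\Id$ on $\supp\cq$'' — follows from the lower bound in \eqref{asp-e-q}/\eqref{energy-est} together with \eqref{rl1s} and \eqref{def-c8}. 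This inference fails: the Geometric Lemma requires the \emph{pointwise in $(\omega,t,x)$} bound $|\mathring{R}_{\ell_q}(t,x)|\le \ve_u\cdot(\text{scalar part})$, whereas \eqref{rl1s} only controls $\|\mathring{R}_q\|_{L^r_\Omega C_TL^1_x}$. After mollification the available uniform bound is \eqref{e3.92}, i.e.\ $\|\mathring{R}_{\ell_q}\|_{C_{T,x}}\lesssim \ell_q^{-4}\|\mathring{R}_q\|_{C_TL^1_x}\sim \lambda_q^{120}\,\delta_{q+2}$, which is enormously larger than $\delta_{q+2}$; and the random fluctuation of $\|\u+\zq\|_{L^2_x}^2$ about its expectation is likewise not of order $\delta_{q+2}$, so no moment argument can restore a pointwise domination of the random matrix $\mathring{R}_{\ell_q}$ by a deterministic scalar of that size. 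Including the noise cross terms inside the matrix to be cancelled only worsens this, since they too are not pointwise small.

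The paper avoids exactly this problem by splitting the scalar coefficient into two pieces: a pointwise part $\rho(t,x)=2\ve_u^{-1}(\ell_q^2+|\mathring{R}_{\ell_q}(t,x)|^2)^{1/2}$ as in \eqref{rhob}, which makes the admissibility condition \eqref{rhor} hold identically with no energy input, and a nonnegative, deterministic temporal function $\gamma_q(t)$ in \eqref{def-gamma} (built from the \emph{expected} energy gap minus the step function $f_q$ of \eqref{def-fq}) through which the energy profile is matched. The $L^1_x$-in-expectation smallness \eqref{rl1s} is then used only where an integrated quantity is needed (e.g.\ \eqref{prin-end} and the new-stress estimates), and the energy lower bound is used only to guarantee $\gamma_q\ge 0$ and to close \eqref{energy-est} across the three regimes $J_1,J_2,J_3$; the two-scale structure $\delta_{q+2}$ versus $\delta_{q+3}$ encoded in $f_q$ is what makes that multi-regime matching possible, a point your single function $\gamma(t)$ does not address. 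To repair your proposal you would have to replace your defect matrix by the paper's $(\rho+\gamma_q)\Id-\mathring{R}_{\ell_q}$ normalization (or prove a pointwise bound on $\mathring{R}_{\ell_q}$ that is simply not available from the inductive assumptions), so as written the key step invoking Lemma~\ref{geometric lem 2} does not go through.
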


The proof of the main iterative estimates in  Proposition \ref{Prop-Iterat} occupies Sections 
\ref{Sec-Interm-Flow}-\ref{Sec-prf-thm} below.

\section{Well-posedness of the $\Lambda$-Navier-Stokes equations}  \label{Sec-Lambda-NSE}

This section is devoted to the proof of Theorem~\ref{thm-fns}.

\medskip
\paragraph{\bf $(i)$ Global existence.} The proof mainly proceeds in three steps.
For simplicity, the dependence on the probability argument $\omega$ is omitted.

\medskip
\noindent{\bf $\bullet$ Galerkin approximation.}
Let us first apply the classical Galerkin method to prove the existence of a global solution $u$ to \eqref{equa-nse-2}.
Let $\{e_j\}_{j=1}^\infty$ be an orthogonal basis in $L^2_\sigma$ satisfying $\Delta e_j =\lambda_j e_j$, $j \geq 1$, and $\mathbb{P}_n: L^2_x \rightarrow L^2_\sigma$ be the projection operator defined by
\begin{align}
\P_n u = \sum_{i=1}^{n} (u, e_j)e_j,
\end{align}
where $(\cdot, \cdot)$ denotes the inner product in $L^2_x$.

Consider the approximate system
\begin{align}\label{equa-nse-g}
\begin{cases}
\p_t u_n-\nu  \Delta  u_n + \P_n\P_{<\Lambda(t)}  (\P_{<\Lambda(t)} u_n\cdot\nabla \P_{<\Lambda(t)}u_n)=0,\\
\div   u_n=0,\\
u_n(0)=\P_n v_0,
\end{cases}
\end{align}
where $\Lambda$ is the smooth decreasing temporal function as in
\eqref{equa-nse-2}.

We first show that there exists a global solution $u_n$
to \eqref{equa-nse-g} of the form
\begin{align}\label{def-un}
u_n(t,x)=\sum_{j=1}^{n} \theta_j^n(t) e_j(x),
\end{align}
where $\{\theta_j^n \}$ are scalar functions.

To this end, taking $L^2_x$-inner product of \eqref{equa-nse-g} with $e_k$
we get
\begin{align*}
\sum_{j=1}^n( \frac{\d\theta_j^n }{\d t}e_j, e_k)-\sum_{j=1}^n ( \nu \Delta e_j, e_k)\theta_j^n +\sum_{i, j=1}^n ( \P_{<\Lambda(t)} (\P_{<\Lambda(t)}e_i \cdot \nabla  \P_{<\Lambda(t)}e_j) , e_k) \theta_i^n\theta_j^n=0,
\end{align*}
which leads to a system of ODEs,
\begin{align}\label{ode-1}
\frac{\d\theta_k^n }{\d t}=-\lambda_k \theta_k^n - \sum_{i, j=1}^n ( \P_{<\Lambda(t)} (\P_{<\Lambda(t)}e_i \cdot \nabla  \P_{<\Lambda(t)}e_j) , e_k) \theta_i^n\theta_j^n.
\end{align}
Note that the right-hand side of \eqref{ode-1} is local Lipschitz with respect to $\{\theta_j^n\}$. Hence, from the classical ODE theory
we know that there exists a unique local solution $\{\theta_j^n\}$.

To show that $\{\theta_j^n\}_{j=1}^n$ do not blow up at time $T$, we take the $L^2_x$ inner product of equation \eqref{equa-nse-g} with $u_n$ to get
\begin{align}  \label{dun-un}
	(\frac{\d}{\d t} u_n, u_n) + (-\nu \Delta u_n, u_n) +(\P_n\P_{<\Lambda(t)}  ( \P_{<\Lambda(t)} u_n\cdot\nabla \P_{<\Lambda(t)}u_n),u_n) =0.
\end{align}
Note that by \eqref{def-un},
$\P_n u_n = u_n$,
we have
\begin{align*}
	(\P_n\P_{<\Lambda(t)}  (\P_{<\Lambda(t)}  u_n\cdot\nabla \P_{<\Lambda(t)}u_n),u_n)= (  (\P_{<\Lambda(t)}  u_n\cdot\nabla \P_{<\Lambda(t)}u_n),\P_{<\Lambda(t)} u_n)=0.
\end{align*}
Hence, integrating \eqref{dun-un} in time we obtain the energy balance
\begin{align}\label{eq-e}
	\frac12 \|u_n(t)\|_{L^2_x}^2 + \nu \int_{0}^{t}\|\nabla u_n(s)\|_{L^2_x}^2\d s = \frac12 \|v_0\|_{L^2_x}^2.
\end{align}
In particular,
$u_n$ is uniformly bounded in $L^2_x$.
Thus, taking into account
\begin{align*}
\sum_{j=1}^n\left|\theta_j^n(t)\right|^2=\left\|u_n(t)\right\|_{L^2_x}^2,
\end{align*}
we infer that $\{\theta_j^n\}$ are bounded on the existing time,
which yields the global existence of smooth solution $u_n$ to \eqref{equa-nse-g} on $[0,T]$. 
Moreover, 
$u_n$ is $\mathcal{F}_0$ measurable 
and thus is $\{\mathcal{F}_t\}$-adapted. 

\medskip
\noindent{\bf $\bullet$ Passing to the limit.}
Now, in order to pass to the limit as $n \to \infty$, we let
$V:= L^2_\sigma \cap H^1_x$
and take the $L^2_x$ inner product of \eqref{equa-nse-g} with $\varphi\in V$ to get
\begin{align}
(\partial_t u_n, \varphi)= (\nu  \Delta u_n, \varphi)-(\P_n\P_{<\Lambda(t)}  ( \P_{<\Lambda(t)} u_n\cdot\nabla \P_{<\Lambda(t)}u_n),\varphi).
\end{align}
By the H\"{o}lder inequality, interpolation and the Sobolev embedding $H^1_x\hookrightarrow L^6_x$, we derive
\begin{align*}
 |(\nu  \Delta u_n, \varphi)|
 \leq \nu|(\nabla u_n, \nabla\varphi)|
 \leq \nu \| \nabla u_n\|_{L^2_x} \|\varphi\|_{V},
\end{align*}
and
\begin{align*}
 |(\P_n\P_{<\Lambda}  (\P_{<\Lambda}  u_n\cdot\nabla \P_{<\Lambda}u_n),\varphi)|
&\leq \|\P_{<\Lambda} u_n\|_{L^3_x}\|\nabla \P_{<\Lambda} u_n\|_{L^2_x} \|\P_{<\Lambda}\P_n\varphi\|_{L^6_x}\notag\\
&\lesssim  \left\|\P_{<\Lambda} u_n\right\|_{L^2_x}^{\frac12}\left\|u_n\right\|_{L^6_x}^{\frac12}\left\|\nabla u_n\right\|_{L^2_x} \left\| \varphi\right\|_{V} \\
& \lesssim \left\|u_n\right\|_{L^2_x}^{\frac12}\left\|\nabla u_n\right\|^{\frac32}_{L^2_x }\|\varphi\|_{V} .
\end{align*}
It follows that
\begin{align}
\int_{0}^{T}	\|\p_t u_n\|_{V^*}^{\frac43}\d t &\lesssim \int_{0}^{T}\| \nabla u_n\|_{L^2_x}^{\frac43}\d t+  \int_{0}^{T}\|u_n \|_{L^2_x}^{\frac23} \|\nabla u_n\|^{2}_{L^2_x }\d t\notag\\
& \lesssim T^{\frac13}\left\|u_n\right\|_{L^2(0, T; V)}^{\frac43}+ \|u_n \|_{L^{\infty}(0, T ; L^2_x)}^{\frac23} \|u_n \|_{L^2(0, T ; V)}^2\notag \\
& \lesssim T^{\frac13} \|v_0 \|_{L^2_x}^{\frac43}+  \|v_0 \|_{L^2_x}^{\frac83},
\end{align}
where the last step was due to the energy balance \eqref{eq-e},
and the implicit constants are independent of $n$ and $T$.

Thus, by the Aubin-Lions Lemma~\ref{ab-lem} in the Appendix and a standard diagonal argument, 
we can choose a subsequence (still denoted by $\{n\}$), 
which may depend on $\omega$, such that
\begin{align}
&u_n \rightarrow u \quad \text { strongly in}\quad L^2(0, T ; L^2_\sigma), \notag\\
&u_n \stackrel{w^*}{\rightharpoonup} u \quad\text { weakly-}* \text {in}\quad L^{\infty}(0, T ; L^2_\sigma), \label{conv} \\
&\nabla u_n \rightharpoonup \nabla u \quad\text { weakly in }\quad L^2\left(0, T ; L^2_x \right),\notag
\end{align}
for some weakly continuous $L^2_\sigma$-valued function $u$. 
We note that the weak continuity of $u$ can be obtained via a standard embedding argument as in \cite{bv2022}.

We claim the following limit holds
\begin{align}\label{con-non-un}
    \int_0^T (\P_{<\Lambda(t)}  (\P_{<\Lambda(t)}  u_n\cdot\nabla \P_{<\Lambda(t)}u_n),\varphi) \d t \rightarrow  \int_0^T (\P_{<\Lambda(t)}  (\P_{<\Lambda(t)}  u \cdot\nabla \P_{<\Lambda(t)}u ),\varphi) \d t
\end{align}
for any $\varphi\in C_0^\infty([0,T]\times \T^3 )$.

To this end, we note that
\begin{align*}
    &\P_{<\Lambda}  (\P_{<\Lambda} u_n\cdot\nabla \P_{<\Lambda}u_n)-\P_{<\Lambda}  ( \P_{<\Lambda} u \cdot\nabla \P_{<\Lambda}u )\\
    = &\, \P_{<\Lambda}(\P_{<\Lambda}  (u_n-u)\cdot\nabla \P_{<\Lambda}u_n)+ \P_{<\Lambda}  (\P_{<\Lambda}  u \cdot\nabla \P_{<\Lambda}(u_n-u )).
\end{align*}
From \eqref{eq-e} and \eqref{conv} it follows that
\begin{align*}
\bigg|\int_0^T (\P_{<\Lambda(t)}(\P_{<\Lambda(t)}  (u_n-u)\cdot\nabla \P_{<\Lambda(t)}u_n),\varphi )\d t \bigg|
& \lesssim  \int_0^T \|\P_{<\Lambda(t)} ( u_n-u)\|_{L^2_x} \| \nabla \P_{<\Lambda(t)}u_n \|_{L^2_x} \d t\notag\\
& \lesssim (\int_0^T \| u_n-u\|_{L^2_x}^2 \d t)^{\frac12}  (\int_0^T \| \nabla u_n \|_{L^2_x}^2 \d t)^{\frac12}  \rightarrow 0.
\end{align*}
Moreover, due to the fact that $\P_{<\Lambda(t)} (\P_{<\Lambda(t)} u_i\P_{<\Lambda(t)}\varphi_j) \in L^2(0,T;L^2_x)$, $1\leq i,j\leq 3$,
using \eqref{conv} again we have
\begin{align*}
\int_0^T (\P_{<\Lambda(t)}  ( \P_{<\Lambda(t)} u_i \p_i \P_{<\Lambda(t)}(u_n-u )_j,\varphi_j )\d t
& =  \int_0^T ( \p_i (u_n-u )_j,  \P_{<\Lambda(t)} (\P_{<\Lambda(t)} u_i\P_{<\Lambda(t)}\varphi_j) )\d t  \rightarrow 0
\end{align*}
as $n\to \infty$,
which yields \eqref{con-non-un}, as claimed.

Thus, we can use \eqref{conv} and \eqref{con-non-un}
to pass to the limit $n\to \infty$ in \eqref{equa-nse-g}
and obtain that $u$ satisfies $\Lambda$-NSE \eqref{equa-nse-2} in the sense of distributions.

\medskip
\noindent{\bf $\bullet$ Spatial regularity.}
Concerning the spatial regularity of $u$, for every $\xi\in \mathbb{Z}^3$, we denote by $\widehat{u}(\cdot,\xi)$ the Fourier coefficient of $u$, i.e.
\begin{align*}
  \widehat{u}(\cdot,\xi):=\int_{\T^3} u(\cdot,x) e^{ix\cdot \xi} \d x.
\end{align*}
We note that the Fourier coefficient of $u$ satisfies
for every $\xi\in \mathbb{Z}^3$,
\begin{align}\label{u-xi}
\widehat{u}(t,\xi)- \widehat{u}(0,\xi)=\int_{0}^t \sum_{ l+h=\xi}
 i \( \frac{(\xi\cdot\widehat{u}(l))\widehat{u}(h)\cdot \xi}{|\xi|^2} \xi -(\xi\cdot\widehat{u}(l))\widehat{u}(h) \)\varphi_{\Lambda}(\xi)\varphi_{\Lambda}(l)\varphi_{\Lambda}(h)-\nu |\xi|^2 \widehat{u}(\xi) \d r.
\end{align}
For any given $t\in (0,T]$, let $k_0= \Lambda(t)$. Then, for any $|\xi|>2k_0$ and $\tau\geq t$ we have
\begin{align*}
\widehat{u}(\tau,\xi)- \widehat{u}(t,\xi)=-\int_{t}^{\tau} \nu |\xi|^2 \widehat{u}(r,\xi) \d r.
\end{align*}
Hence,
\begin{align}\label{u-xi-k0}
\widehat{ u}(\tau,\xi) =\widehat{ u}(t,\xi)e^{-\nu |\xi|^2 (\tau-t)}, \ \ \forall\ |\xi|>2k_0,\ \tau\geq t.
\end{align}
For any $s\geq 0$ and $\tau\geq \frac{3}{2}t$, by \eqref{decay-lam} and \eqref{u-xi-k0} we derive
\begin{align}\label{est-u-hs}
\|u(\tau)\|_{\dot{H}^s_x}^2
&\leq  \sum_{|\xi|\leq  2k_0} \left||\xi|^{s} \widehat{u} (\tau,\xi)\right|^2+ \sum_{|\xi|>2k_0 } \left| |\xi|^{s} \widehat{u} (\tau,\xi)\right|^2\notag \\
&\leq \sum_{|\xi|\leq  2k_0} \left||\xi|^{s} \widehat{u} (\tau,\xi)\right|^2+ \sum_{|\xi|>2k_0 } | |\xi|^{s} e^{-\nu |\xi|^2 (\tau-t)} \widehat{ u} (t,\xi)|^2\notag \\
&\leq \sum_{|\xi|\leq  2k_0} | |\xi|^{s} \widehat{u} (\tau,\xi)|^2 + \|   e^{- \nu \Delta (\tau-t)} u(t) \|_{\dot{H}^s_x}^2\notag\\
&\lesssim ( \Lambda^{2s}(t)+ t^{-s})\|v_0\|_{L^2_x}^2,
\end{align}
where we also use the heat semigroup estimate
\begin{align*}
    \| e^{-\nu \Delta (\tau-t)} u(t) \|_{\dot{H}^s_x}^2\lesssim (\tau-t)^{-s} \|u\|_{C_TL^2_x}^2\lesssim t^{-s} \|u\|_{C_TL^2_x}^2\ \text{for any}\ \tau\geq \frac32 t.
\end{align*}
In particular, by \eqref{decay-lam}
\begin{align}\label{spa-regu-pointwise}
    \|u(\tau)\|_{\dot{H}^s_x}^2 \lesssim (\Lambda^{2s}(\frac23 \tau)+(\frac23 \tau)^{-s})\|v_0\|_{L^2}^2\lesssim (1+(\frac23 \tau)^{-\frac{s}{4}}+(\frac23\tau)^{-s}) \|v_0\|_{L^2}^2\lesssim ( 1+ \tau^{-s}) \|v_0\|_{L^2}^2\ \ \forall \tau> 0.
\end{align}
This yields that
\begin{equation} \label{spa-regu}
\|u\|_{C([t,T];\dot{H}^s_x)) } \lesssim ( 1+ t^{-\frac{s}{2}})\|v_0\|_{L^2_x}\lesssim ( 1+ t^{-\frac{s}{2}})M  < \infty,
\end{equation}
where the implicit constants are deterministic and universal.

\medskip
\noindent{\bf $\bullet$ Temporal regularity.}
In order to obtain the temporal regularity of $u$, we use the inductive
arguments. By equation \eqref{equa-nse-2}, \eqref{decay-lam},
\eqref{spa-regu-pointwise} and the embedding $W^{s+3,1}_x\hookrightarrow{H}^{s+1}_x$, we get
\begin{align}\label{tem-regu-1}
\|\p_t u\|_{C([t,T];\dot{H}^s_x)}
& \leq \nu\|\Delta u \|_{C([t,T];\dot{H}^s_x)}
+ \|\P_H \P_{<\Lambda}\div( (\P_{<\Lambda}u )\otimes (\P_{<\Lambda}u ))\|_{C([t,T];\dot{H}^s_x)}\notag\\
&\lesssim \| u \|_{C([t,T];\dot{H}^{s+2}_x)}+ \| (\P_{<\Lambda}u )\otimes (\P_{<\Lambda}u )\|_{C([t,T];{H}^{s+1}_x)}\notag\\
&\lesssim \| u \|_{C([t,T];\dot{H}^{s+2}_x)}+ \| \P_{<\Lambda} u \|_{C([t,T];{H}^{s+3}_x)}^2\notag\\
&\lesssim (1+t^{-\frac{s+2}{2}})\|v_0\|_{L^2_x}+(1+\Lambda^{2s+6}(t))\|v_0\|_{L^2_x}^2  \notag \\
&\lesssim (1+ t^{-\frac{s+2}{2}}) (\|v_0\|_{L^2_x}+\|v_0\|_{L^2_x}^2),
\end{align}
where the implicit constants are deterministic and universal.

Assume that $\|\p_t^{m} u\|_{C([t,T];\dot{H}^s)}$ is finite for all $s>0$ and $1\leq m\leq N-1$. By \eqref{u-xi}, we derive
\begin{align}\label{tem-regu-2}
\|\p_t^N u\|_{C([t,T];\dot{H}^s_x)}&=\|\sum_{\xi \in \mathbb{Z}^3}|\xi|^s \widehat{\p_t^N u}(\xi)\|_{C([t,T];L^2_\xi)}\notag\\
& \leq  \| \sum_{\xi \in \mathbb{Z}^3}\sum_{ l+h=\xi} \sum_{N_1+N_2+N_3=N-1} |\xi|^s (\xi\cdot\widehat{\p_t^{N_1}u}(l))\widehat{\p_t^{N_2}u}(h) \p_t^{N_3}(\varphi_{\Lambda}(\xi)\varphi_{\Lambda}(l)\varphi_{\Lambda}(h))\|_{C([t,T];L^2_\xi)}\notag\\
&\quad + \|\sum_{\xi \in \mathbb{Z}^3}|\xi|^{s+2 }\widehat{\p_t^{N-1} u}(\xi) \|_{C([t,T];L^2_\xi)}\notag\\
&:= J_1+J_2.
\end{align}
By the inductive assumption, we have $J_2<+ \infty$.
Moreover, the first term on the right-hand side above can be estimated by
\begin{align}
J_1& \leq \sum_{N_1+N_2+N_3=N-1} \left\| \sum_{\xi \in \mathbb{Z}^3}\sum_{ l+h=\xi}  |\xi|^s (\xi\cdot\widehat{\p_t^{N_1}u}(l))\widehat{\p_t^{N_2}u}(h) \right\|_{C([t,T];L^2_\xi)}\sup_{l+h=\xi,\,\xi \in \mathbb{Z}^3}\|\p_t^{N_3}(\varphi_{\Lambda}(\xi)\varphi_{\Lambda}(l)\varphi_
{\Lambda}(h))\|_{C([t,T])}\notag\\
&\leq  \sum_{N_1+N_2+N_3=N-1} \left\| (\p_t^{N_1}u\cdot \nabla )\p_t^{N_2}u  \right\|_{C([t,T];\dot{H}^s_x)}\sup_{l+h=\xi,\, \xi \in \mathbb{Z}^3}\|\p_t^{N_3}(\varphi_{\Lambda}(\xi)\varphi_{\Lambda}(l)\varphi_{\Lambda}(h))\|_{C([t,T])}\notag\\
&\leq  \sum_{N_1+N_2+N_3=N-1}  \|  \p_t^{N_1}u \|_{C([t,T];H^{s+2}_x)} \|  \p_t^{N_2}u \|_{C([t,T];H ^{s+3}_x)}\sup_{l+h=\xi,\,\xi \in \mathbb{Z}^3}\|\p_t^{N_3}(\varphi_{\Lambda}(\xi)\varphi_{\Lambda}(l)\varphi_{\Lambda}(h))\|_{C([t,T])}\notag\\
&<+\infty.
\end{align}

Thus, using inductive arguments we obtain that
$\|\p_t^{m} u\|_{C([t,T];\dot{H}^s)}<\infty $ for all $s>0$ and $m\in \mathbb{N}$.
In particular,
$u$ is smooth for positive times.
Moreover, due to the skew-symmetry of the nonlinear term, the energy balance \eqref{eq-e-2} holds.

\medskip
\paragraph{\bf Continuity at initial time} Fix $\omega\in \Omega$, from the energy balance \eqref{eq-e-2}, we get
\begin{align*}
\limsup_{t\to 0^+} \|u(t)\|_{L^2_x}\leq \|v_0\|_{L^2_x}.
\end{align*}
Moreover, since $u(t)\rightharpoonup v_0$ in $L^2_x$ as $t\to 0^+$, 
\begin{align*}
 \|v_0\|_{L^2_x}\leq \liminf_{t\to 0^+}\|u(t)\|_{L^2_x}.
\end{align*}
Hence,
\begin{align*}
\|u(t)\|_{L^2_x}\to \|v_0\|_{L^2_x}\quad \text{as}\quad t\to 0^+,
\end{align*}
which along with the fact that $v_0$ is the weak limit of $u(t)$ in $L^2_x$ shows that
\begin{align*}
u(t)\to v_0  \quad \text { strongly in}\quad  L^2_x\quad \text{as}\quad t\to 0^+.
\end{align*}
Hence, we derive
\begin{align}
    \|u(\cdot)-v_0 \|_{ C([0,T_*];L^2_x)}\rightarrow 0\quad \text{as}\quad T_*\rightarrow 0^+.
\end{align}
Moreover, by \eqref{spa-regu}, we get
\begin{align}
    \|u -v_0 \|_{ L^\infty_{\Omega}C([0,T_*];L^2_x)} \leq  \|u \|_{ L^\infty_{\Omega}C([0,T_*];L^2_x)}+ \|v_0 \|_{ L^\infty_{\Omega}C([0,T_*];L^2_x)}\lesssim M.
\end{align}
Therefore, by the Lebesgue dominated convergence theorem, fix any $\rho\in (1,\infty)$, we have
\begin{align}
  \|u -v_0 \|_{ L^\rho_{\Omega}C([0,T_*];L^2_x)} \rightarrow 0\quad \text{as}\quad T_*\rightarrow 0^+.
\end{align}

\medskip
\paragraph{\bf $(ii)$ Uniqueness.}
We shall prove that any 
probabilisticall strong and analytically weak solution $v$ to \eqref{equa-nse-2} satisfying the energy inequality \eqref{eq-ine-2} coincides with the above constructed solution $u$.

To this end, we first note that by \eqref{conv}, $u\in L^2(0,T;H^1_x)$.
We claim that $\P_{<\Lambda}  (\P_{<\Lambda}u\cdot \nabla \P_{<\Lambda}u)\in L^2(0,T;H^{-1}_x)\cap L^1(0,T;L^2_x)$. To this end, for $\phi\in L^2(0,T;H^1_x)$, by \eqref{decay-lam} and Sobolev's embedding $H^1_x\hookrightarrow L^4_x$, we have
\begin{align}
\int_0^T (\P_{<\Lambda}  (\P_{<\Lambda}u\cdot \nabla \P_{<\Lambda}u),\phi)\d t
&\lesssim (\int_0^T \|\P_{<\Lambda}u\|^4_{H^1_x}\d t)^\frac12 \|\nabla \phi\|_{L^2_TL^2_x} \notag\\
&\lesssim (\int_0^T (1+ \Lambda(s))^4  \d s)^\frac12\|u\|_{C_T L^2_x}^2\|\nabla \phi\|_{L^2_TL^2_x}
<+\infty.
\end{align}
Next, we show that $\P_{<\Lambda}  (\P_{<\Lambda}u\cdot \nabla \P_{<\Lambda}u) \in L^1(0,T;L^2_x)$. Indeed, using \eqref{decay-lam} and Sobolev's embedding $H^2_x\hookrightarrow L^\infty_x$ we get
\begin{align*}
\int_0^T \| \P_{<\Lambda}  (\P_{<\Lambda}u\cdot \nabla \P_{<\Lambda}u)\|_{L^2_x} \d t
&\leq \int_0^T \|  \nabla \P_{<\Lambda}u \|_{L^2_x}\| \P_{<\Lambda}u\|_{L^\infty_x} \d t\notag\\
&\lesssim \int_0^T  \Lambda(t) \| u \|_{L^2_x} \| \P_{<\Lambda}u\|_{H^2_x} \d t\notag\\
&\lesssim \int_0^T  \Lambda(t)(1+ \Lambda^2(t))  \d t\| u \|_{C_TL^2_x}^2 <+\infty.
\end{align*}
Thus, we get $\P_{<\Lambda}  (\P_{<\Lambda}u\cdot \nabla \P_{<\Lambda}u)\in L^2(0,T;H^{-1}_x)\cap L^1(0,T;L^2_x)$, as claimed,
which along with the fact that $u\in L^2(0,T;H^1_x)$ give
\begin{align}
\p_t u \in L^2(0,T;H^{-1}_x).
\end{align}
Since $v\in C_TL^2_x\cap L^2_TH^1_x$, we have
\begin{align}\label{e-u-v}
\int_0^t (\partial_t u, v)\d s
 +\nu\int_0^t (\nabla u, \nabla v)\d s+\int_0^t
 (\P_{<\Lambda(s)}  (\P_{<\Lambda(s)}u\cdot \nabla \P_{<\Lambda(s)}u), v)\d s=0.
\end{align}
Similarly, $\P_{<\Lambda}  (\P_{<\Lambda}v\cdot \nabla \P_{<\Lambda}v) \in L^2(0,T;H^{-1}_x)\cap L^1(0,T;L^2_x)$, 
and we have
\begin{align}\label{e-v-u}
-\int_0^t ( v, \partial_t u )\d s
+\nu\int_0^t  (\nabla v, \nabla u)\d s+\int_0^t  (\P_{<\Lambda(s)}  ( \P_{<\Lambda(s)}v\cdot \nabla \P_{<\Lambda(s)}v), u ) \d s
= ( v(0), u(0) )-  ( v(t), u(t)) .
\end{align}
Combining \eqref{e-u-v} and \eqref{e-v-u} together and using the fact that $u(0)=v(0)=v_0$ we obtain
\begin{align}\label{ineq-u+v}
&\quad 2 \nu\int_0^t  (\nabla v, \nabla u)\d s+\int_0^t ( \P_{<\Lambda(s)}  (\P_{<\Lambda(s)} u\cdot \nabla \P_{<\Lambda(s)}u), v)\d s+\int_0^t (\P_{<\Lambda(s)}  ( \P_{<\Lambda(s)}v\cdot \nabla \P_{<\Lambda(s)}v), u)\d s\notag\\
&=\left\|v_0\right\|^2_{L^2_x}- ( v(t), u(t)).
\end{align}
Let us denote $w:=u-v$ the difference between $u$ and $v$. Then, by straightforward computations,
\begin{align}
 & 2 (\nabla u, \nabla v)=\|\nabla u\|_{L^2_x}^2+\|\nabla v\|_{L^2_x}^2-\|\nabla w\|_{L^2_x}^2, \label{left-1} \\
 &  (v(t), u(t))=\frac{1}{2}\|u(t)\|_{L^2_x}^2+\frac{1}{2}\|v(t)\|_{L^2_x}^2-\frac{1}{2}\|w(t)\|_{L^2_x}^2\label{right-2}
\end{align}
and
\begin{align}\label{left-23}
	&\int_0^t  (\P_{<\Lambda(s)}  (\P_{<\Lambda(s)} u\cdot \nabla \P_{<\Lambda(s)}u), v)\d s
     + \int_0^t (\P_{<\Lambda(s)}  (\P_{<\Lambda(s)} v\cdot \nabla \P_{<\Lambda(s)}v), u)\d s\notag \\
 = &\,\int_0^t  (\P_{<\Lambda(s)}  ( \P_{<\Lambda(s)}w\cdot \nabla \P_{<\Lambda(s)}w), u) \d s.
\end{align}
Plugging \eqref{left-1}-\eqref{left-23} into \eqref{ineq-u+v} we obtain
\begin{align}
\frac{1}{2}\|w(t)\|_{L^2_x}^2+\nu\int_0^t\|\nabla w\|_{L^2_x}^2\d s-\int_0^t(\P_{<\Lambda(s)}  (\P_{<\Lambda(s)} w\cdot \nabla \P_{<\Lambda(s)}w), u)\d s=E_1+E_2,
\end{align}
where, by the energy balance \eqref{eq-e-2},
\begin{align*}
E_1 :=\frac{1}{2}\|u(t)\|_{L^2_x}^2+\nu\int_0^t\|\nabla u\|_{L^2_x}^2 \d s-\frac{1}{2}\|v_0\|_{L^2_x}^2=0,
\end{align*}
and by the energy inequalitiy \eqref{eq-ine-2},
\begin{align*}
E_2 :=\frac{1}{2}\|v(t)\|_{L^2_x}^2+\nu\int_0^t\|\nabla v(s)\|_{L^2_x}^2 \d s-\frac{1}{2}\|v_0\|_{L^2_x}^2 \leq 0.
\end{align*}
Thus, it follows that
\begin{align}
\frac{1}{2}\|w(t)\|_{L^2_x}^2+\nu\int_0^t\|\nabla w\|_{L^2_x}^2\d s &\leq \int_0^t ( \P_{<\Lambda(s)}  ( \P_{<\Lambda(s)}w\cdot \nabla \P_{<\Lambda(s)}w), u)\d s\notag\\
&\leq \int_0^t \|\P_{<\Lambda(s)}u\|_{L^\infty_x} \|\P_{<\Lambda(s)} w\|_{L^2_x } \|\nabla \P_{<\Lambda(s)}w\|_{L^2_x } \d s\notag\\
&\leq \frac{2}{\nu} \int_0^t \|\P_{<\Lambda(s)}u\|_{H^2_x}^2 \| w\|_{L^2_x }^2\d s +\frac\nu2 \int_0^t  \|\nabla w\|_{L^2_x}^2 \d s,
\end{align}
which yields that
\begin{align}\label{est-w}
\|w(t)\|_{L^2_x}^2+\nu\int_0^t\|\nabla w\|_{L^2_x}^2\d s&\leq \frac{4}{\nu}\int_0^t \|\P_{<\Lambda(s)}u\|_{H^2_x}^2 \| w\|_{L^2_x }^2\d s.
\end{align}
Since $\Lambda(t)\leq t^{-\frac18}$ as $t\in (0,T]$, we have 
\begin{align}   \label{Lambda-uH2-esti}
   \int_0^T \|\P_{<\Lambda(s)}u\|_{H^2_x}^2 \d s= \int_0^T \|(1+|\xi|^2)\widehat  \varphi_{\Lambda(s)}(\xi) \widehat u  \|_{L^2_\xi}^2 \d s \leq \int_0^T (1+4\Lambda(s)^2)^2  \d s\|u\|_{C_T L^2_x}^2<\infty.
\end{align}
Consequently,
applying Gronwall's lemma to \eqref{est-w}
we obtain that $w(t)=0$ for all $t\in [0,T]$. 

We note that by the 
above uniqueness result, 
the convergences in  
\eqref{conv} hold for the whole sequence $\{u_n\}$ 
which is independent of $\omega$. Since $\{u_n\}$ are $\{\mathcal{F}_t\}$-adapted, 
so is the limit $u$.

\medskip
\paragraph{\bf $(iii)$ Vanishing of nonlinearity on high modes at the initial time.}
First, we note that for the Fourier support
\begin{align}
\supp_{\xi}  \(\P_{< \frac{\Lambda}{6}}\wt u \mathring \otimes \P_{< \frac{\Lambda}{6}}\wt u\) \subseteq [0, \frac{2\Lambda}{3}],
\end{align}
and so
\begin{align}
    \P_{\geq \Lambda}\(\P_{< \frac{\Lambda}{6}}\wt u \mathring \otimes \P_{< \frac{\Lambda}{6}}\wt u\)=0.
\end{align}
It follows that
\begin{align}\label{decom-I}
\P_{\geq \Lambda}(\P_{< \Lambda(t)}\wt u\mathring \otimes \P_{< \Lambda}\wt u)
&=  \P_{\geq \Lambda}\((\P_{< \frac{\Lambda}{6}}\wt u + \P_{[\frac{\Lambda}{6},2\Lambda]}\wt u)\mathring \otimes (\P_{< \frac{\Lambda}{6}}\wt u + \P_{[\frac{\Lambda}{6},2\Lambda ]}\wt u)\)
\notag \\
&=   \P_{\geq \Lambda}\(\P_{< \frac{\Lambda}{6}}\wt u \mathring \otimes \P_{[\frac{\Lambda}{6},2\Lambda]}\wt u\)+   \P_{\geq \Lambda}\(\P_{[\frac{\Lambda}{6},2\Lambda]}\wt u \mathring \otimes\P_{< \frac{\Lambda}{6}} \wt u\) \notag\\
&\quad + \P_{\geq \Lambda}\(\P_{[\frac{\Lambda}{6},2\Lambda]}\wt u \mathring \otimes \P_{[\frac{\Lambda}{6},2\Lambda]}\wt u\),
\end{align}
where $\P_{[\frac{\Lambda}{6},2\Lambda]}\wt u :=
\P_{<\Lambda}\wt u - \P_{ <\frac{\Lambda}{6}}\wt u
(=\P_{\geq \frac{\Lambda}{6}}\wt u - \P_{\geq \Lambda}\wt u)$,
which has Fourier support on $[\Lambda/6, 2\Lambda]$.
Then, fix any $\rho\in [1,\infty)$, we derive from \eqref{decom-I},
the uniform boundedness of $\P_{\geq \Lambda}$ in $L^1_x$
and the inequality $\|\P_{< \frac{\Lambda}{6}}\wt u\|_{L^2_x} \leq \|\wt u\|_{L^2_x}$
that
\begin{align}\label{est-non-decay-2}
&\quad \|\P_{\geq \Lambda}(\P_{< \Lambda}\wt u\mathring \otimes \P_{< \Lambda}\wt u)\|_{L^{\rho}_{\Omega}C([0,T_*]; L^1_x)} \notag\\
&\leq   \|\P_{\geq \Lambda}
\(\P_{< \frac{\Lambda}{6}}\wt u \mathring \otimes \P_{[\frac{\Lambda}{6},2\Lambda ]}\wt u\)\|_{L^{\rho}_{\Omega}C([0,T_*]; L^1_x)}
+ \| \P_{\geq \Lambda}\(\P_{[\frac{\Lambda}{6},2\Lambda]}\wt u \mathring \otimes\P_{< \frac{\Lambda}{6}} \wt u\) \|_{L^{\rho}_{\Omega}C([0,T_*]; L^1_x)}  \notag\\
&\quad + \|\P_{\geq \Lambda}\(\P_{[\frac{\Lambda}{6},2\Lambda]}\wt u \mathring \otimes \P_{[\frac{\Lambda}{6},2\Lambda]}\wt u\)\|_{L^{\rho}_{\Omega}C([0,T_*]; L^1_x)}  \notag\\
&\leq C ( \|\wt u \|_{L^{2\rho}_{\Omega}C([0,T_*]; L^2_x)}  \| \P_{[\frac{\Lambda}{6},2\Lambda ]}\wt u\|_{L^{2\rho}_{\Omega}C([0,T_*]; L^2_x)}  + \| \P_{[\frac{\Lambda}{6},2\Lambda]}\wt u\|_{L^{2\rho}_{\Omega}C([0,T_*]; L^2_x)} ^2 ).
\end{align}
Taking into account that, by \eqref{u-l2-decay},
\begin{align*}
   \|\P_{[\frac{\Lambda}{6},2\Lambda]}\wt u \|_{L^{2\rho}_{\Omega}C([0,T_*]; L^2_x)}
  \leq  \| \P_{\geq \frac{\Lambda}{6}}\wt u\|_{L^{2\rho}_{\Omega}C([0,T_*]; L^2_x)}
        + \|\P_{\geq \Lambda}\wt u\|_{L^{2\rho}_{\Omega}C([0,T_*]; L^2_x)}
  \to 0, \ \ \text{as}\ T_*\to 0^+,
\end{align*}
we thus obtain \eqref{deacy-non} and finish the proof of Theorem~\ref{thm-fns}.
\hfill $\square$

\section{Intermittent velocity flow} \label{Sec-Interm-Flow}

From this section to Section \ref{Sec-Rey-stress}
we aim to prove the main iterative estimates in  Proposition~\ref{Prop-Iterat} at level $q+1$.

\subsection{Intermittent jets}

Let us begin with the construction of velocity perturbations at level $q+1$, which will be indexed by the following five parameters
\begin{equation}\label{larsrp}
\rp:= \lambda_{q+1}^{-\frac47},\ \rs := \lambda_{q+1}^{-\frac67},\
	 \lambda := \lambda_{q+1},\ \mu:=\lambda_{q+1}^{\frac97},\ \sigma=\lambda_{q+1}^{\frac17}.
\end{equation}

We use the intermittent jets introduced in \cite{bcv21}
as the basic building blocks. More precisely,
let $\Phi : \mathbb{R}^2 \to \mathbb{R}$ be a smooth function with support in a ball of radius $\frac18$, centered at $(\frac12,\frac12)$. Suppose that $\Phi$ is normalized such that $\phi := - \Delta\Phi$ satisfies
\begin{equation}\label{e4.91}
\int_{\mathbb{R}^2} \phi^2(x)\d x = 1.
\end{equation}
 Define $\psi: \mathbb{R} \rightarrow \mathbb{R}$ to be a smooth and mean zero function, satisfying
\begin{equation}\label{e4.92}
\int_{\mathbb{R}} \psi^{2}\left(x\right) \d x=1, \quad \supp\psi\subseteq [\frac38,\frac58].
\end{equation}
We define the rescaled cut-off functions by
\begin{equation*}
	\phi_{\rs}(x) := {\rs^{-1}}\phi\left(\frac{x}{\rs}\right), \quad
	\Phi_{\rs}(x):=   {\rs^{-1}} \Phi\left(\frac{x}{\rs}\right),\quad
	\psi_{\rp}\left(x\right) := {r_{\|}^{- \frac 12}} \psi\left(\frac{x}{r_{\|}}\right).
\end{equation*}
By an abuse of notation, let us periodize $\phi_{\rs}$, $\Phi_{\rs}$ and $\psi_{\rp}$,
so that
$\phi_{\rs}$, $\Phi_{\rs}$ are treated as periodic functions on $\mathbb{T}^2$,
and $\psi_{\rp}$ as a periodic function on $\mathbb{T}$.

\vspace{1ex}

Let $(k,k_1,k_2)$ be the orthonormal bases for every $k\in \Lambda$, where $\Lambda \subset \mathbb{S}^2 \cap \mathbb{Q}^3$ is the wavevector set as in the Geometric Lemma ~\ref{geometric lem 2}.
The \textit{intermittent jets} are defined by
\begin{equation*}
	W_{(k)} :=  \psi_{\rp}(\sigma N_{\Lambda}(k_1\cdot x+\mu t))\phi_{\rs}( \sigma N_{\Lambda}k\cdot (x-\alpha_k), \sigma N_{\Lambda}k_2\cdot (x-\alpha_k))k_1,\ \  k \in \Lambda,
\end{equation*}
where $N_{\Lambda}$ is a constant given by \eqref{NLambda}, the shifts $\a_k\in \R^3$, $k\in \Lambda$, are chosen suitably  such that the functions $\{W_{(k)}\}$ have mutually
disjoint supports. The existence of such $\{\a_k\}$ can be guaranteed by taking $\rs$ sufficiently small.

For brevity of notations, we set
\begin{equation}\label{snp}
	\begin{array}{ll}
		&\psi_{(k_1)}(x) :=\psi_{\rp}(\sigma N_{\Lambda}(k_1\cdot x+\mu t)), \\
		&\phi_{(k)}(x) := \phi_{\rs}( \sigma N_{\Lambda}k\cdot (x-\a_k),\sigma N_{\Lambda}k_2\cdot (x-\a_k)), \\
		&\Phi_{(k)}(x) := \Phi_{\rs}(\sigma N_{\Lambda}k\cdot (x-\a_k),\sigma N_{\Lambda}k_2\cdot (x-\a_k)),
	\end{array}
\end{equation}
and thus
\begin{equation}\label{snwd}
	W_{(k)} = \psi_{(k_1)}\phi_{(k)} k_1,\quad  k\in \Lambda.
\end{equation}

Furthermore, in order to define the incompressible corrector of perturbations, we set
	\begin{align}\label{corrector vector}
		\wt W_{(k)}^c := \frac{1}{\lambda^2N_{ \Lambda }^2} \nabla\psi_{(k_1)}\times\curl(\Phi_{(k)} k_1),\quad 	W^c_{(k)} := \frac{1}{\lambda^2N_{\Lambda}^2 } \psi_{(k_1)}\curl\Phi_{(k)} k_1.
	\end{align}
It holds that
\begin{equation}\label{wcwc}
	W_{(k)} + \wt W_{(k)}^c
	=\curl \left(\frac{1}{\lambda^2N_{\Lambda}^2 } \psi_{(k_1)}\curl\Phi_{(k)} k_1\right)
	=\curl W^c_{(k)}.
\end{equation}
In particular,
it follows the incompressiblity
$\div (W_{(k)}+ \wt W^c_{(k)}) =0$.

The key estimates of the intermittent jets are summerized in Lemma \ref{buildingblockestlemma} below.

\begin{lemma} [Estimates of intermittent jets, \cite{bv19r}] \label{buildingblockestlemma}
	For $p \in [1,\infty]$, $k\in\Lambda$,
 and $N,\,M \in \mathbb{N}$, we have
	\begin{align}
		&\left\|\nabla^{N} \partial_{t}^{M} \psi_{(k_1)}\right\|_{C_T L^{p}_{x}}
		\lesssim r_{\|}^{\frac 1p- \frac 12} (\sigma r_{\|}^{-1} )^{N}
		 (\sigma \mu r_{\|}^{-1} )^{M}, \label{intermittent estimates} \\
		&\left\|\nabla^{N} \phi_{(k)}\right\|_{L^{p}_{x}}+\left\|\nabla^{N} \Phi_{(k)}\right\|_{L^{p}_{x}}
		\lesssim r_{\perp}^{\frac 2p- 1}  \lambda^{N}. \label{intermittent estimates2}
	\end{align}
 Moreover, it holds that
	\begin{align}
		&\displaystyle\left\|\nabla^{N} \partial_{t}^{M} W_{(k)}\right\|_{C_T  L^{p}_{x}}
		+\frac{r_{\|}}{r_{\perp}}\left\|\nabla^{N} \partial_{t}^{M} \wt W_{(k)}^{c}\right\|_{C_T L^{p}_{x}}
		+\lambda \left\|\nabla^{N} \partial_{t}^{M} W_{(k)}^c\right\|_{C_T L^{p}_{x}} \lesssim r_{\perp}^{\frac 2p- 1} r_{\|}^{\frac 1p- \frac 12} \lambda^{N}
		 (\sigma \mu r_{\|}^{-1})^{M}.   \label{ew}
	\end{align}
	The implicit constants above are independent of $\rs,\,\rp,\,\lambda$, $\sigma$ and $\mu$.
\end{lemma}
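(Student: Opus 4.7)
The plan is to obtain the three sets of estimates in turn: first for the scalar building blocks $\psi_{(k_1)}$, then for $\phi_{(k)}$ and $\Phi_{(k)}$, and finally for the full velocity pieces $W_{(k)}$, $\wt W_{(k)}^c$, $W_{(k)}^c$, exploiting throughout the separability coming from the fact that $\psi_{(k_1)}$ depends only on $k_1\cdot x$ while $\phi_{(k)},\,\Phi_{(k)}$ depend only on $k\cdot x$ and $k_2\cdot x$ in the orthonormal frame $(k,k_1,k_2)$.

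For \eqref{intermittent estimates}: writing $\psi_{(k_1)}(t,x)=\psi_{\rp}(z)$ with $z:=\sigma N_\Lambda(k_1\cdot x+\mu t)$, the chain rule gives $\nabla^N\partial_t^M\psi_{(k_1)}=(\sigma N_\Lambda)^N(\sigma N_\Lambda\mu)^M\,k_1^{\otimes N}\,\psi_{\rp}^{(N+M)}(z)$. Since $\sigma N_\Lambda\in\mathbb N$ by design, the composition is $\mathbb T^3$-periodic, and since $\supp\psi_{\rp}\subseteq[\tfrac38r_\|,\tfrac58r_\|]$ lies in one period once $r_\|$ is small, the one-dimensional scaling $\|\psi_{\rp}^{(m)}\|_{L^p(\mathbb T)}\lesssim r_\|^{1/p-1/2-m}$ yields \eqref{intermittent estimates}. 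For \eqref{intermittent estimates2}, the same scaling argument in two dimensions gives $\|\nabla^N\phi_{\rs}\|_{L^p(\mathbb R^2)}+\|\nabla^N\Phi_{\rs}\|_{L^p(\mathbb R^2)}\lesssim r_\perp^{2/p-1-N}$, and the chain rule produces an additional $(\sigma N_\Lambda)^N$. The parameter choice \eqref{larsrp} enforces $\sigma/r_\perp=\lambda^{5/7}\leq\lambda$, which allows us to upper-bound $(\sigma/r_\perp)^N$ by $\lambda^N$ and conclude.

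For \eqref{ew}: since $W_{(k)}=\psi_{(k_1)}\phi_{(k)}k_1$ and $\phi_{(k)}$ is $t$-independent, Leibniz gives
\[
\nabla^N\partial_t^M W_{(k)}=\sum_{N_1+N_2=N}\tbinom{N}{N_1}\bigl(\nabla^{N_1}\partial_t^M\psi_{(k_1)}\bigr)\bigl(\nabla^{N_2}\phi_{(k)}\bigr)k_1.
\]
The orthogonal change of variables $(y,z_1,z_2)=(k_1\cdot x,\,k\cdot x,\,k_2\cdot x)$ is volume-preserving, so the $L^p(\mathbb T^3)$-norm of the product factorizes as $\|\psi_{(k_1)}\phi_{(k)}\|_{L^p_x}=\|\psi_{(k_1)}\|_{L^p_y}\|\phi_{(k)}\|_{L^p_{z_1,z_2}}$. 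Combining with the previous estimates and using $\sigma r_\|^{-1}=\lambda$ (exactly saturated by \eqref{larsrp}) to absorb $(\sigma/r_\|)^{N_1}\lambda^{N_2}\leq\lambda^N$ yields the bound for $W_{(k)}$.

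The corrector estimates follow from \eqref{corrector vector} by the same product-decomposition. For $W^c_{(k)}=\lambda^{-2}N_\Lambda^{-2}\psi_{(k_1)}\curl(\Phi_{(k)}k_1)$, the $\curl$ contributes one derivative, hence an extra factor $\sigma N_\Lambda/r_\perp\lesssim\lambda$ (by \eqref{larsrp}) balanced against $\lambda^{-2}$, leaving the displayed $\lambda^{-1}$ gain. For $\wt W^c_{(k)}=\lambda^{-2}N_\Lambda^{-2}\nabla\psi_{(k_1)}\times\curl(\Phi_{(k)}k_1)$, one obtains the prefactor $\sigma^2/(\lambda^2 r_\perp r_\|)$, which by $\sigma\leq\lambda r_\perp$ is bounded by $r_\perp/r_\|$, matching the displayed loss. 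The main bookkeeping obstacle is to track these parameter inequalities coming from \eqref{larsrp}—specifically $\sigma/r_\|=\lambda$, $\sigma/r_\perp\leq\lambda$, and $\sigma\leq\lambda r_\perp$—and to ensure $\sigma N_\Lambda\in\mathbb N$ so that periodicity is preserved and the separation-of-variables identities apply.
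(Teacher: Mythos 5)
Your proof is correct and follows the standard scaling/chain-rule/Leibniz argument with separation of variables in the frame $(k,k_1,k_2)$, which is exactly the argument of the cited reference \cite{bv19r}; the paper itself only quotes the lemma without proof. One small slip in your bookkeeping: with \eqref{larsrp} one has $\sigma r_{\perp}^{-1}=\lambda$ and $\sigma r_{\|}^{-1}=\lambda^{5/7}$ (you state the saturation the other way around), but since your argument only uses the inequalities $\sigma r_{\perp}^{-1}\leq\lambda$, $\sigma r_{\|}^{-1}\leq\lambda$ and $\sigma\leq\lambda r_{\perp}$, which all hold, every estimate you derive stands.
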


\subsection{Velocity flows}   \label{Sec-Pert}
Below we construct the key velocity  perturbations
and verify the inductive estimates for the corresponding velocity flows.

\subsubsection{\bf Velocity perturbations}
As in \cite{bcv21,bv19b},
the crucial velocity perturbations in the convex integration scheme
consist of three components: the principal part, the incompressibility corrector and the temporal corrector.

\medskip
\paragraph{\bf Principal part.}

Let $\phi_{\varepsilon}$ be  a family of standard  mollifiers on $\T^3$ supported on a ball of radius $\ve (>0)$ centered at $0$ and $\varphi_{\varepsilon}$ be a family of  standard mollifiers on $\T$ supported on $(0,\varepsilon)$. Note that, the one-sided temporal mollifier allows to preserve the adaptedness. The mollification of $\ru$ in space and time is defined by
\begin{equation}\label{mol}
	\begin{array}{ll}
		\mathring{R}_{\ell_q}:= (\ru *_{x} \phi_{\ell_q}) *_{t} \varphi_{\ell_q},
	\end{array}
\end{equation}
where $\ell_q$ is given by \eqref{def-ell}.

Note that, for any $N\in \mathbb{N}_+$,
\begin{align}
&  \| \mathring{R}_{\ell_q}(t)\|_{L^{1}_{ x}} \lesssim   \sup_{s\in [t-\ell_q,t]}\|\mathring R_{q}(s)\|_{L^{1}_{x}}, \label{rll1-s}\\
&  \| \mathring{R}_{\ell_q}\|_{C_{T,x}^N} \lesssim  \ell_q^{-4-N}\|\mathring{R}_q \|_{C_TL^1_x}.  \label{e3.92}
\end{align}

Set
\begin{align}
&	\rho (t,x) :=   2 \varepsilon_u^{-1} (\ell_q^2+|\mathring{R}_{\ell_q}(t,x) |^2)^{\frac12}
 ,\label{rhob}\\
 &	\gamma_{q}(t):= \frac13\Big (e(t) -f_q  -\E\|u_q+z_{ q}\|_{L^2_x}^2 \Big)*_{t}  \varphi_{\ell_q}(t), \label{def-gamma}
\end{align}
where $\varepsilon_{u}$ is the small radius in Geometric Lemma \ref{geometric lem 2} and
\begin{align}\label{def-fq}
	f_q(t) :=1_{[T_{q+1},T]}\frac{1}{2}\delta_{q+ 2}+ 1_{[T_{q+2}+\ell_{q},T_{q+1}]}\frac34\delta_{q+ 3}.
\end{align}
Note that
\begin{equation}\label{rhor}
	\left|  \frac{\mathring{R}_{{\ell_q}} }{\rho+\gamma_{q} } \right|\leq \varepsilon_u,
\end{equation}
and
\begin{align}
	\label{rhoblowbound}
	&\rho \gtrsim \ell_q,\quad 0\leq \gamma_{q}\leq 2\delta_{q+1},\\
	\label{rhoblp}
	&\norm{ \rho (t) }_{ L^p_{x}} \leq  2\ve_u^{-1}( \ell_q + \norm{\mathring{R}_{q} }_{C_{[t-\ell_q,t]}L^p_{x}}),\quad p\in [1, +\infty).
\end{align}
Moreover, using the iterative estimate \eqref{rhoblowbound}, we have for $N\geq 1$ and $M\geq 0$,
\begin{align}\label{rhoB-Ctx.1}
 \norm{ \rho  }_{C_{T,x}} \lesssim   \ell_q^{-4}(1+\|\rr_q \|_{C_{T}L^1_x}), \quad   \norm{ \rho  }_{C_{T,x}^N}  \lesssim   \ell_q^{-6N+1}(1+\|\rr_q \|_{C_{T}L^1_x})^{N}, \quad\norm{ \p_t^M \gamma_q}_{C_{T}} \lesssim \ell_q^{-M},
\end{align}
where we also choose $a$ sufficiently large such that the implicit constants are deterministic and universal.

The amplitudes of velocity perturbations are defined by
\begin{equation}\label{akb}
	a_{(k)}(t,x):= (\rho (t,x)+\gamma_{q} (t))^{\frac{1}{2} }\gamma_{(k)}
       \left(\Id-\frac{\mathring{R}_{\ell_q}(t,x)}{\rho(t,x)+\gamma_{q} (t)}\right), \quad k \in \Lambda,
\end{equation}
where $\gamma_{(k)}$ and $\Lambda$ are as in Geometric Lemma~\ref{geometric lem 2}.

In view of Geometric Lemma~\ref{geometric lem 2}
and the expression \eqref{akb},
the following algebraic identity holds
\begin{align}\label{velcancel}
	\sum\limits_{ k \in  \Lambda } a_{(k)}^2
	W_{(k)} \otimes W_{(k)}
	& = (\rho+\gamma_{q})\Id - \mathring{R}_{\ell_q}
	+  \sum\limits_{ k \in \Lambda  }  a_{(k)}^2 \P_{\neq 0}(  W_{(k)} \otimes  W_{(k)} ),
\end{align}
where $\P_{\neq 0}$ denotes the spatial projection onto nonzero Fourier modes.

By virtue of \eqref{rhoblowbound}-\eqref{rhoB-Ctx.1}, we have the analytic estimates of amplitudes in Lemma~\ref{mae} below.
The proof follows in an analogous way as in \cite{lzz21},
and thus it is omitted here.

\begin{lemma} [Estimates of amplitudes] \label{mae}
For every $N\geq 1$, $k\in \Lambda$ and $t\in [0,T]$, we have
	\begin{align}
		&\norm{a_{(k)}}_{C_TL^2_{x}} \lesssim (\ell_q + J_q)^{\frac12}+\delta_{q+1}^{\frac12},\label{est-akbl2}\\
			&\norm{ a_{(k)} }_{C_{T,x} } \lesssim  \ell_q^{-2} (1+J_q)^{\frac12}, \label{est-akbc}\\
		& \norm{ a_{(k)} }_{C_{T,x}^N} \lesssim  \ell_q^{-6N-7}(1+J_q)^{N+2}, \label{est-akbn}
	\end{align}
where
\begin{align}\label{def-jq}
		J_q:= \norm{\mathring{R}_{q} }_{C_{T}L^1_{x}},
\end{align}
and the implicit constants are deterministic and universal.
\end{lemma}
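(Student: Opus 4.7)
My plan is to estimate each of the three quantities by decomposing $a_{(k)} = F \cdot G$, where $F := (\rho + \gamma_q)^{1/2}$ is a scalar factor and $G := \gamma_{(k)}(\Id - \mathring{R}_{\ell_q}/(\rho + \gamma_q))$ is the matrix-valued composition. The starting observation is that, by \eqref{rhor}, the argument of $\gamma_{(k)}$ stays in the $\ve_u$-ball around $\Id$, on which the Geometric Lemma \ref{geometric lem 2} guarantees that $\gamma_{(k)}$ is $C^\infty$ with all derivatives bounded by universal constants. Therefore $\|G\|_{C_{T,x}}$ is bounded and \eqref{est-akbl2} follows immediately:
\begin{align*}
\|a_{(k)}(t)\|_{L^2_x} \lesssim \|(\rho + \gamma_q)^{1/2}(t)\|_{L^2_x} \lesssim \big(\|\rho(t)\|_{L^1_x} + \gamma_q(t)\big)^{1/2} \lesssim (\ell_q + J_q)^{1/2} + \delta_{q+1}^{1/2},
\end{align*}
where we used \eqref{rhoblp} and the bound $\gamma_q \leq 2\delta_{q+1}$ from \eqref{rhoblowbound}. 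The same $L^\infty$ control on $G$ together with the first inequality in \eqref{rhoB-Ctx.1} yields \eqref{est-akbc}, since $\delta_{q+1}$ is bounded by $\ell_q^{-4}(1+J_q)$ after absorbing constants.

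For the higher-order bound \eqref{est-akbn}, the plan is to apply the Leibniz and Fa\`a di Bruno formulas to $F$ and $G$ separately, then combine via one more Leibniz expansion. For $F = h(\rho + \gamma_q)$ with $h(s) = s^{1/2}$, the key inputs are the lower bound $\rho + \gamma_q \gtrsim \ell_q$ from \eqref{rhoblowbound} (so $|h^{(j)}| \lesssim \ell_q^{1/2 - j}$ on the relevant range), together with the derivative bounds $\|\rho\|_{C_{T,x}^N} \lesssim \ell_q^{-6N+1}(1+J_q)^N$ and $\|\partial_t^M \gamma_q\|_{C_T} \lesssim \ell_q^{-M}$ from \eqref{rhoB-Ctx.1}. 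Fa\`a di Bruno then gives a bound of the form $\|F\|_{C_{T,x}^N} \lesssim \ell_q^{-6N+1/2}(1+J_q)^N$, absorbing the $\gamma_q$-contributions since $\ell_q^{-M} \le \ell_q^{-6M+1}(1+J_q)^M$.

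For $G$, I would first estimate $1/(\rho + \gamma_q)$ by the same Fa\`a di Bruno argument applied to $s \mapsto s^{-1}$, obtaining $\|(\rho + \gamma_q)^{-1}\|_{C_{T,x}^N} \lesssim \ell_q^{-6N-1}(1+J_q)^N$. Next the Leibniz rule applied to the product $\mathring{R}_{\ell_q} \cdot (\rho + \gamma_q)^{-1}$ together with \eqref{e3.92} produces $\|\mathring{R}_{\ell_q}/(\rho + \gamma_q)\|_{C_{T,x}^N} \lesssim \ell_q^{-6N-5}(1+J_q)^{N+1}$. Finally a second application of Fa\`a di Bruno to $\gamma_{(k)}(\cdot)$ (whose derivatives on the $\ve_u$-ball are absolute constants) yields the same order bound for $\|G\|_{C_{T,x}^N}$. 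One last Leibniz expansion $\|a_{(k)}\|_{C_{T,x}^N} \le \sum_{j=0}^N \binom{N}{j}\|F\|_{C_{T,x}^j}\|G\|_{C_{T,x}^{N-j}}$ then delivers \eqref{est-akbn}, with the worst case producing the announced exponents $\ell_q^{-6N-7}(1+J_q)^{N+2}$.

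The main obstacle, and the reason this computation is largely bookkeeping rather than deep, is to track carefully how many negative powers of $\ell_q$ and positive powers of $(1 + J_q)$ are generated by each nested application of Fa\`a di Bruno; in particular one must verify that the crude counting suggested above does not lose in the final exponents. Everything else is analogous to the corresponding amplitude estimates in \cite{lzz21}, where exactly the same three-step scheme (bounding $F$, bounding $G$, then combining) is carried out.
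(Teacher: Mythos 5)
Your proofs of \eqref{est-akbl2} and \eqref{est-akbc} are fine, and the overall scheme (factor $a_{(k)}=F\cdot G$ with $F=(\rho+\gamma_q)^{1/2}$, $G=\gamma_{(k)}(\Id-\mathring{R}_{\ell_q}/(\rho+\gamma_q))$, then Leibniz) is the standard one; note the paper itself omits the proof and refers to \cite{lzz21}. The genuine gap is in \eqref{est-akbn}, precisely at the step where you assert that a second Fa\`a di Bruno applied to $\gamma_{(k)}(\cdot)$ ``yields the same order bound'' for $\|G\|_{C^N_{T,x}}$ as for the inner function $u:=\Id-\mathring{R}_{\ell_q}/(\rho+\gamma_q)$. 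With only the norm information you derive, $\|u\|_{C^m_{T,x}}\lesssim \ell_q^{-6m-5}(1+J_q)^{m+1}$, Fa\`a di Bruno does \emph{not} preserve this order: a partition into $j$ blocks contributes $\prod_i\|u\|_{C^{m_i}_{T,x}}\lesssim \ell_q^{-6N-5j}(1+J_q)^{N+j}$, and the worst case $j=N$ (the $\|u\|_{C^1_{T,x}}^N$ term) gives only $\|G\|_{C^N_{T,x}}\lesssim \ell_q^{-11N}(1+J_q)^{2N}$. After your final Leibniz step this yields, e.g., $\ell_q^{-24}(1+J_q)^{9/2}$ for $N=2$ and $\ell_q^{-35}(1+J_q)^{13/2}$ for $N=3$, both strictly worse than the claimed $\ell_q^{-6N-7}(1+J_q)^{N+2}$ (which is used in the paper for $N\geq 2$, e.g.\ $N=3$ in \eqref{I1-esti}). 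So for $N\geq 2$ the bookkeeping as you describe it provably does not close; only the $N=1$ case comes out within the stated bound. The loss originates already in your estimate of $\|u\|_{C^1_{T,x}}$: bounding $\|\mathring{R}_{\ell_q}\|_{C_{T,x}}\|(\rho+\gamma_q)^{-1}\|_{C^1_{T,x}}$ by the product of sup-norms ignores the pointwise cancellation $|\mathring{R}_{\ell_q}|\leq \tfrac{\varepsilon_u}{2}(\rho+\gamma_q)$, and that extra factor $\ell_q^{-5}(1+J_q)$ then gets raised to the power $N$ by the composition.

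The missing idea is to use this pointwise domination instead of multiplying separate sup-norm bounds. For instance, write $a_{(k)}=\Phi(\mathring{R}_{\ell_q},\gamma_q)$ with $\Phi(R,\gamma):=(\rho(R)+\gamma)^{1/2}\gamma_{(k)}\bigl(\Id-R/(\rho(R)+\gamma)\bigr)$ and $\rho(R)=2\varepsilon_u^{-1}(\ell_q^2+|R|^2)^{1/2}$; since $\rho(R)+\gamma\gtrsim\max(\ell_q,|R|,\gamma)$, one checks $|D^j\Phi|\lesssim \ell_q^{1/2-j}$ for $j\geq 1$. A single Fa\`a di Bruno with the inner bounds $\|(\mathring{R}_{\ell_q},\gamma_q)\|_{C^m_{T,x}}\lesssim \ell_q^{-4-m}(1+J_q)$ (from \eqref{e3.92} and \eqref{rhoB-Ctx.1}) then gives $\|a_{(k)}\|_{C^N_{T,x}}\lesssim \ell_q^{-6N+1/2}(1+J_q)^{N}$, which implies \eqref{est-akbn} with room to spare. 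Equivalently, within your nested scheme you would first need the sharper bound $\|u\|_{C^m_{T,x}}\lesssim \ell_q^{-6m}(1+J_q)^m$ (whose seminorm sequence is submultiplicative), after which the composition with $\gamma_{(k)}$ and the final Leibniz expansion no longer accumulate losses.
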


Now, the principal part of the velocity perturbations is defined by
	\begin{align}\label{pv}
		w_{q+1}^{(p)} &:= \sum_{k \in \Lambda} a_{(k)}W_{(k)}.
	\end{align}
By the symmetric nonlinearity  in the velocity equation
and identity \eqref{velcancel},
we have
\begin{align}  \label{vel oscillation cancellation calculation}
	 w_{q+ 1}^{(p)} \otimes  w_{q+ 1}^{(p)}+ \mathring{R}_{\ell_q} =& (\rho+\gamma_{q}) Id   +\sum_{k \in \Lambda} a_{(k)}^2\P_{\neq0}( W_{(k)}\otimes W_{(k)}) .
\end{align}

\paragraph{\bf Incompressibility corrector}
Define the incompressibility corrector by
\begin{align}  \label{wqc}
	 w_{q+1}^{(c)}
		&:=   \sum_{k\in \Lambda  }  \( \nabla a_{(k)} \times  W^c_{(k)}   +a_{(k)}  \wt W_{(k)}^c  \),
\end{align}
where $W^c_{(k)}$ and  $\wt W_k^c $  are given by \eqref{corrector vector}.
It follows from \eqref{wcwc} that
\begin{align} \label{div free velocity}
		&  w_{q+1}^{(p)}+ w_{q+1}^{(c)}
		=   \curl (  \sum_{k \in \Lambda} a_{(k)}  W_{(k)}^c ).
	\end{align}
In particular,
$ w_{q+1}^{(c)}$ permits to correct the pricipal part to be
divergence free:
\begin{align} \label{div-wpc-dpc-0}
	\div (w_{q+1}^{(p)} + w_{q +1}^{(c)}) =  0.
\end{align}

\paragraph{\bf Temporal corrector}
In order to balance the high spatial oscillations in \eqref{vel oscillation cancellation calculation}
caused by the spatial concentration function $\psi_{(k_1)}$,
one also needs the temporal corrector $w_{q+1}^{(t)}$
defined by
	\begin{align}
		&w_{q+1}^{(t)} := -{\mu}^{-1}  \sum_{k\in \Lambda} \P_{H}\P_{\neq 0}(a_{(k)}^2 \psi_{(k_1)}^2 \phi_{(k)}^2  k_1),\label{veltemcor}
	\end{align}
where $\P_{H}$ denotes the Helmholtz-Leray projector, i.e.,
$\P_{H}=\Id-\nabla\Delta^{-1}\div. $

Then, via the Leibniz rule, one has
\begin{align} \label{utem}
	&\partial_{t} w_{q+1}^{(t)}+    \sum_{k \in \Lambda  }  \P_{\neq 0}
	\(a_{(k)}^{2} \div (W_{(k)} \otimes W_{(k)})\) \notag \\
	=&- {\mu}^{-1}   \sum_{k \in \Lambda}  \P_{H} \P_{\neq 0} \partial_{t}
	\left(a_{(k)}^{2} \psi_{(k_1)}^{2} \phi_{(k)}^{2}   k_1\right)
	  + {\mu}^{-1}     \sum_{k \in \Lambda}  \P_{\neq 0}
	\(a_{(k)}^{2} \partial_{t} (\psi_{(k_1)}^{2} \phi_{(k)}^{2}) k_1\)  \nonumber \\
	=&(\nabla\Delta^{-1}\div)  {\mu}^{-1}  \sum_{k \in \Lambda  }  \P_{\neq 0} \partial_{t}
	\(a_{(k)}^{2} \psi_{(k_1)}^{2} \phi_{(k)}^{2} k_1\)
	  - {\mu}^{-1}    \sum_{k \in \Lambda  }  \P_{\neq 0}
\(\partial_{t}( a_{(k)}^{2})
 (\psi_{(k_1)}^{2} \phi_{(k)}^{2}) k_1\).
\end{align}

\paragraph{\bf Velocity fields}
We are now in position to define the velocity perturbations at level $q+1$. We note that by assumptions \eqref{asp-0}-\eqref{asp-q}, the Reynolds error at level $q$ already satisfies the decay estimate \eqref{rl1s} on the interval $[T_{q+2},T_{q+1}]$. Hence, the perturbations are only needed to be added on the interval $[T_{q+1},T]$ to decrease the old Reynolds error. In order to also maintain the initial value of the approximate solution $u_{q+1}$ at level $q+1$, we use the deterministic smooth cut-off function $\chi_{q+1}$ which satisfies
\begin{align*}
	\chi_{q+1}(t)=\begin{cases}
		1,\quad t\in [T_{q+1},T],\\
		0,\quad t\in [0,\frac{T_{q+2}+T_{q+1}}{2}],\\
	\end{cases}
\end{align*}
and for $0\leq N\leq 2$
\begin{align}\label{est-chiq}
\|	\chi_{q+1}\|_{C^N_t}\lesssim \ell_{q}^{-N}.
\end{align}
The velocity perturbation  $w_{q+1}$ at level $q+1$ is now defined by	\begin{align}
	w_{q+1} &:= \chi_{q+1}w_{q+1}^{(p)}+\chi_{q+1} w_{q+1}^{(c)}+ \chi_{q+1}^2  w_{q+1}^{(t)}.
		\label{velocity perturbation}
	\end{align}
To ease notations,
set
\begin{align}
	\wt w_{q+1}^{(p)}:= \chi_{q+1}w_{q+1}^{(p)}, \quad \wt w_{q+1}^{(c)} :=\chi_{q+1} w_{q+1}^{(c)}\quad \wt w_{q+1}^{(t)}:=\chi_{q+1}^2  w_{q+1}^{(t)}.
\end{align}
Then,
\begin{align}
w_{q+1} = \wt w_{q+1}^{(p)} +\wt  w_{q+1}^{(c)} +\wt w_{q+1}^{(t)}.
\end{align}
The new velocity fields at level $q+1$ is thus defined by
\begin{align}  \label{q+1 iterate}
		  u_{q+1}:= u_{q} + w_{q+1}.
\end{align}

The key estimates of velocity perturbations are summarized in Lemma \ref{totalest}.

\begin{lemma}  [Estimates of velocity perturbations] \label{totalest}
	For any $\rho \in(1,\9)$, $0\leq N\leq 1$,
the following estimates hold:
	\begin{align}
		&\norm{ \na^N w_{q+1}^{(p)}}_{C_TL^\rho_x } \lesssim \ell_q^{-2}(1+\j)^{ N+2}  \lambda^N(\rp \rs^2 )^{\frac{1}{\rho}-\frac12} ,\label{uprinlp}\\
		&\norm{ \na^N w_{q+1}^{(c)} }_{C_TL^\rho_x  } \lesssim \ell_q^{-2}(1+\j)^{ N+3}  \lambda^N\rs \rp ^{-1}(\rp \rs^2  )^{\frac{1}{\rho}-\frac12}, \label{divcorlp}\\
		&\norm{ \na^Nw_{q+1}^{(t)}}_{C_TL^\rho_x  }\lesssim  \ell_q^{-4}(1+\j)^{ N+4}\mu^{-1}  \lambda^N(\rp \rs^2 )^{\frac{1}{\rho}-1},\label{temcorlp}
	\end{align}
where $\j$ is defined in \eqref{def-jq}. Moreover, we have
\begin{align}
& \norm{ w_{q+1}^{(p)} }_{C_{T,x}^1 }  + \norm{ w_{q+1}^{(c)} }_{C_{T,x}^1 }+\norm{ w_{q+1}^{(t)} }_{C_{T,x}^1 }
\lesssim \lambda^{4}(1+\j)^{ 6}.\label{principal c1 est}
\end{align}
\end{lemma}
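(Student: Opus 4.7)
The plan is to prove all estimates by direct application of the Leibniz rule, reducing the problem to products of amplitude bounds from the preceding Lemma on amplitudes with intermittent jet bounds from Lemma~\ref{buildingblockestlemma}, summed over the finite set $\Lambda$. No new idea is required beyond careful bookkeeping of powers of $\lambda_{q+1}$, $\ell_q$ and $J_q$.

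For the principal part estimate \eqref{uprinlp}, I would write
\begin{align*}
\nabla^N w_{q+1}^{(p)} = \sum_{k\in\Lambda}\sum_{j=0}^N \binom{N}{j}\nabla^j a_{(k)}\otimes \nabla^{N-j} W_{(k)},
\end{align*}
then bound each summand by $\|\nabla^j a_{(k)}\|_{C_{T,x}}\|\nabla^{N-j} W_{(k)}\|_{C_T L^\rho_x}$. The amplitude estimates \eqref{est-akbc}--\eqref{est-akbn} give $\|\nabla^j a_{(k)}\|_{C_{T,x}} \lesssim \ell_q^{-6j-7}(1+J_q)^{j+2}$, while \eqref{ew} with $M=0$ gives $\|\nabla^{N-j} W_{(k)}\|_{C_TL^\rho_x} \lesssim \lambda^{N-j} r_\perp^{\frac 2\rho-1}r_\parallel^{\frac 1\rho-\frac 12}$. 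Since the parameters \eqref{larsrp} satisfy $\lambda = \lambda_{q+1} \geq \ell_q^{-6}(1+J_q)$ (by \eqref{def-laq} and \eqref{def-ell} with $a$ sufficiently large), putting all derivatives on $W_{(k)}$ dominates, giving \eqref{uprinlp}. The finite cardinality of $\Lambda$ absorbs into the implicit constant.

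For the corrector \eqref{divcorlp}, I would treat the two summands in \eqref{wqc} separately. Using the second and third estimates in \eqref{ew},
\begin{align*}
\|\nabla^{N-j}W^c_{(k)}\|_{C_TL^\rho_x}\lesssim \lambda^{N-j-1}r_\perp^{\frac 2\rho-1}r_\parallel^{\frac 1\rho-\frac 12},
\qquad
\|\nabla^{N-j}\wt W^c_{(k)}\|_{C_TL^\rho_x}\lesssim \lambda^{N-j}\frac{r_\perp}{r_\parallel}\,r_\perp^{\frac 2\rho-1}r_\parallel^{\frac 1\rho-\frac 12}.
\end{align*}
Combined with the amplitude bounds, the $\wt W^c_{(k)}$ piece dominates (since $r_\perp/r_\parallel = \lambda^{-2/7}$ exceeds $\lambda^{-1}$ with the choice \eqref{larsrp}) and supplies the factor $r_\perp/r_\parallel$ in \eqref{divcorlp}; an extra $(1+J_q)$ comes from picking up one derivative on $a_{(k)}$. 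For the temporal corrector \eqref{temcorlp}, I would use boundedness of $\P_H\P_{\neq 0}$ on $L^\rho_x$ for $\rho\in(1,\infty)$, the product estimate $\|\psi_{(k_1)}^2\phi_{(k)}^2\|_{C_TL^\rho_x}\lesssim r_\parallel^{\frac 1\rho-1}r_\perp^{\frac 2\rho - 2} = (r_\perp^2r_\parallel)^{\frac 1\rho-1}$ from \eqref{intermittent estimates}--\eqref{intermittent estimates2}, and the Leibniz expansion; again assigning spatial derivatives to $\psi^2\phi^2$ dominates, yielding the claimed bound with the prefactor $\mu^{-1}$.

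The last bound \eqref{principal c1 est} is the only one that involves temporal derivatives. I would expand $\partial_t$ and $\nabla$ via Leibniz and use the full $(M,N)\le 1$ estimates from \eqref{intermittent estimates}--\eqref{ew}; the dominant contribution to $\partial_t w_{q+1}^{(t)}$ comes from $\partial_t(\psi_{(k_1)}^2)$, producing the scale $\mu^{-1}\cdot \sigma\mu r_\parallel^{-1}\cdot r_\parallel^{-1}r_\perp^{-2} = \sigma r_\parallel^{-2}r_\perp^{-2}$, and similar computations for $w_{q+1}^{(p)}$ and $w_{q+1}^{(c)}$ produce the scale $\lambda\cdot r_\perp^{-1}r_\parallel^{-1/2}$ and $\sigma\mu r_\parallel^{-1}\cdot r_\perp^{-1}r_\parallel^{-1/2}$ respectively. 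With the parameter choices \eqref{larsrp} each of these is bounded by $\lambda_{q+1}^{4}$; the amplitude factors contribute at most $\ell_q^{-13}(1+J_q)^3$, which is absorbed into $\lambda_{q+1}^4(1+J_q)^6$ for $a$ large.

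The only place requiring care is ensuring that terms with derivatives falling on the amplitude $a_{(k)}$ remain subdominant to those falling on the building blocks; this is guaranteed by the large gap $\lambda_{q+1}\gg \ell_q^{-6}$ built into \eqref{def-laq}--\eqref{def-ell}, so no genuine obstacle arises. The computation is otherwise a mechanical exercise in the Leibniz rule and the scaling \eqref{larsrp}.
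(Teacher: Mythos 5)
Your overall route is the same as the paper's: Leibniz expansion, the amplitude bounds of Lemma~\ref{mae}, the jet bounds of Lemma~\ref{buildingblockestlemma}, $L^\rho_x$-boundedness of $\P_H\P_{\neq 0}$, and the deterministic gap $\lambda_{q+1}\geq \lambda_q^{b}=\ell_q^{-b/30}$ to absorb the $\ell_q^{-O(1)}$ losses when derivatives hit $a_{(k)}$; the estimates \eqref{uprinlp}--\eqref{temcorlp} go through exactly as you describe. One caution: the inequality you invoke, $\lambda_{q+1}\geq \ell_q^{-6}(1+J_q)$, cannot be asserted pathwise, since $J_q=\|\mathring R_q\|_{C_TL^1_x}$ is a random variable controlled only in moments via \eqref{rl1b}; fortunately it is also not needed, because the stated bounds carry the powers of $(1+J_q)$ explicitly, so the dominance of the terms where derivatives fall on the building blocks only requires deterministic comparisons of the type $\ell_q^{-11}\lesssim\lambda_{q+1}$, which follow from \eqref{def-laq}--\eqref{def-ell}.

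There is, however, one genuine gap, in your treatment of \eqref{principal c1 est} for the temporal corrector. By \eqref{veltemcor}, $w^{(t)}_{q+1}=-\mu^{-1}\sum_{k}\P_H\P_{\neq 0}\big(a_{(k)}^2\psi_{(k_1)}^2\phi_{(k)}^2 k_1\big)$, and $\P_H\P_{\neq 0}$ is \emph{not} bounded on $L^\infty_x$, so your pointwise count ``$\mu^{-1}\cdot\sigma\mu r_\parallel^{-1}\cdot r_\parallel^{-1}r_\perp^{-2}$'' does not directly bound $\|\partial_t w^{(t)}_{q+1}\|_{C_{T,x}}$, nor do the analogous counts bound $\|w^{(t)}_{q+1}\|_{C_{T,x}}$ or $\|\nabla w^{(t)}_{q+1}\|_{C_{T,x}}$ (this is not an issue for \eqref{temcorlp}, where you correctly work in $L^\rho_x$, $\rho\in(1,\infty)$). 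The paper closes this point by estimating $a_{(k)}^2\psi_{(k_1)}^2\phi_{(k)}^2$ in $C^{N_1}_t W^{N_2+1,6}_x$ and combining $L^6_x$-boundedness of $\P_H\P_{\neq 0}$ with the embedding $W^{1,6}_x\hookrightarrow L^\infty_x$; this costs an extra factor, giving $\mu^{-1}\lambda(\sigma\mu\rp^{-1})(\rp\rs^2)^{-5/6}\sim\lambda^{76/21}$ in place of your $\lambda^{3}$, which still sits below $\lambda^{4}$ under \eqref{larsrp}, so the conclusion survives once this device is inserted, but as written your step for $w^{(t)}_{q+1}$ is not justified. (Minor bookkeeping slips elsewhere, e.g.\ the dominant $C^1_{T,x}$ scale of $w^{(p)}_{q+1}$ comes from $\partial_t\psi_{(k_1)}$, i.e.\ $\sigma\mu\rp^{-1}\,r_\perp^{-1}r_\parallel^{-1/2}$, rather than $\lambda\, r_\perp^{-1}r_\parallel^{-1/2}$, do not affect the outcome.)
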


\begin{proof}
By Lemmas \ref{buildingblockestlemma} and \ref{mae}, for any $\rho \in (1,+\infty)$,
\begin{align}\label{uplp}
\norm{\nabla^N w_{q+1}^{(p)} }_{C_TL^\rho_x }  \lesssim& \, \sum_{k \in \Lambda }
\sum\limits_{N_1+N_2 = N}
\|a_{(k)}\|_{C_TC^{N_1}_{x}}
\norm{ \nabla^{N_2}  W_{(k)} }_{C_TL^\rho_x } \notag \\
\lesssim&\,	\ell_q^{-2}(1+\j)^{  N+2}  \lambda^N(\rp \rs^2)^{\frac{1}{\rho}-\frac12},
\end{align}
which verifies \eqref{uprinlp}.

Moreover, using \eqref{b-beta-ve}, \eqref{ew}, \eqref{wqc} and Lemma \ref{mae}
we have
\begin{align} \label{lp vel corrector estimate}
 \norm{\na^N w_{q+1}^{(c)}}_{C_TL^\rho_x}
	 \lesssim &  \sum_{k\in \Lambda }
	\left\|\na^N \( \nabla a_{(k)} \times W^c_{(k)}  +a_{(k)}  \wt W_{(k)}^c  \) \right\|_{C_TL^\rho_x}  \notag\\
	\lesssim&
	\sum\limits_{k\in \Lambda } \sum_{N_1+N_2=N}
        \( \norm{ \nabla^{N_1+1} a_{(k)} }_{C_{T,x}} \norm{\na^{N_2} W^c_{(k)}}_{C_TL^{\rho}_x  }
	 +  \norm{ a_{(k)} }_{C_{T,x}^{N_1}} \norm{ \na^{N_2}\wt W^c_{(k)}}_{C_TL^\rho_x}  \)  \nonumber   \\
	 \lesssim&  \ell_q^{-2}(1+\j)^{  N+3}  \lambda^N\rs \rp ^{-1}(\rp \rs^2)^{\frac{1}{\rho}-\frac12}.
\end{align}
	
Regarding the temporal correctors,
by \eqref{veltemcor}, Lemmas \ref{buildingblockestlemma}, \ref{mae}
and the boundedness of operators $P_{\not =0}$ and $\mathbb{P}_H$ in $L^\rho$ spaces, $\rho\in (1,\9)$, we have
\begin{align}
\label{lp vel time estimate}
\norm{\na^N w_{q+1}^{(t)} }_{C_TL^\rho_x}
\lesssim & \,\mu^{-1}    \sum_{k \in \Lambda }
\sum_{N_1+N_2+N_3 =N}
\|\nabla^{N_1}(a_{(k)}^2)\|_{C_{T,x} }\norm{  \na^{N_2} (\psi_{(k_1)}^2) }_{C_TL^{\rho}_x }\norm{  \na^{N_3}(\phi_{(k)}^2) }_{L^{\rho}_x }   \notag  \\
\lesssim & \, \ell_q^{-4}(1+\j)^{ N+4}\mu^{-1}  \lambda^N(\rp \rs^2)^{\frac{1}{\rho}-1},
\end{align}
which yields  \eqref{temcorlp}.

It remains to prove the $C^1$-estimate \eqref{principal c1 est} of the perturbations. By Lemmas \ref{buildingblockestlemma} and \ref{mae},
\begin{align} \label{wprincipal c1 est}
	\norm{ w_{q+1}^{(p)} }_{C_{T,x}^1}
	\lesssim&  \, \sum_{k \in \Lambda }\sum_{N_1+N_2\leq 1}
	\|a_{(k)}\|_{C_{T,x}^{N_1} }\norm{   W_{(k)}  }_{C_{T,x}^{N_2}}  \notag \\
	\lesssim&\, \ell_q^{-13}(1+\j)^{3}( \sigma\rp^{-1}\mu )(\rp\rs^2)^{-\frac 12}  \lesssim \lambda^{4}(1+\j)^{3},
\end{align}
where we also used \eqref{b-beta-ve} and \eqref{larsrp} in the last step.
Similarly, we have
\begin{align} \label{uc c1 est}
	\norm{ w_{q+1}^{(c)} }_{C_{T,x}^1 }
	& \lesssim   \sum_{k\in \Lambda }
	\left\|  \nabla a_{(k)} \times W^c_{(k)}   +a_{(k)}  \wt W_{(k)}^c  \right\|_{C_{T,x}^1 }  \notag \\
	& \lesssim   \sum_{k \in \Lambda } (\| \nabla a_{(k)}\|_{C_{T,x}^1}\norm{ W^c_{(k)}  }_{C_{T,x}^1}+
\|  a_{(k)}\|_{C_{T,x}^1}\norm{ \wt W_{(k)}^c  }_{C_{T,x}^1})  \notag \\
	& \lesssim \lambda^{4}(1+\j)^{4}.
\end{align}
Moreover, by Sobolev's embedding $W^{1,6}(\mathbb{T}^3) \hookrightarrow L^\9(\mathbb{T}^3)$
and the boundedness of  $\mathbb{P}_H \mathbb{P}_{\not =0}$ in the space $L^6_x$,
\begin{align} \label{ut c1 est}
\norm{ w_{q+1}^{(t)} }_{C_{T,x}^1 }
& \lesssim \mu^{-1} \sum_{k \in \Lambda}
\sum_{0\leq N_1+N_2 \leq 1}\|a_{(k)}^2 \psi^2_{(k_1)} \phi^2_{(k)}\|_{C^{N_1}_t W^{N_2+1,6}_x} \notag \\
&\lesssim \ell_q^{-14}(1+\j)^{6} \mu^{-1} \lambda ( \sigma\rp^{-1}\mu ) (\rp\rs^2)^{-\frac56}  \notag   \\
& \lesssim \lambda^{4}(1+\j)^{6},
\end{align}
where the last step was due to Lemmas \ref{buildingblockestlemma} and \ref{mae}. 

Thus, combining \eqref{wprincipal c1 est}-\eqref{ut c1 est} altogether and using \eqref{larsrp}
we obtain \eqref{principal c1 est} and finish the proof.
\end{proof}

\subsubsection{\bf Verification of inductive estimates
for velocity flow} \label{Subsec-induc-vel-mag}

We aim to verify the inductive estimates \eqref{ul2}, \eqref{uc}, \eqref{energy-est} and \eqref{u-B-L2tx-conv}
for the new velocity flow at level $q+1$.
To ease notations, let us set
\begin{align*}
	& w_{q+1}^{(c)+(t)}:=  w_{q+1}^{(c) }+ w_{q+1}^{ (t) }, \quad \wt w_{q+1}^{(c)+(t)}:= \wt w_{q+1}^{(c) }+ \wt w_{q+1}^{ (t) }.
\end{align*}

Applying the refined H\"older estimate in Lemma~\ref{Decorrelation1} with $f= a_{(k)}$, $g = \psi_{(k_1)}\phi_{(k)}$ and using Lemma~\ref{buildingblockestlemma} we get
\begin{align}
	\label{Lp decorr vel}
	\norm{ w^{(p)}_{q+1}}_{C_T L^2_{x}}
	&\lesssim \sum\limits_{k\in \Lambda }
       \Big(\|a_{(k)} \|_{C_TL^2_{x}}\norm{ \psi_{(k_1)}\phi_{(k)}}_{C_TL^2} +\lambda^{-\frac{1}{14}}\|a_{(k)}\|_{C^1_{T,x}} \norm{  \psi_{(k_1)}\phi_{(k)} }_{C_TL^2}\Big).
\end{align}
Then, by \eqref{la}, \eqref{b-beta-ve} and Lemma \ref{mae},
we get
\begin{align}
	\label{Lp-wdp-1}
	\norm{w^{(p)}_{q+1} }_{C_TL^2_{x}}
	&\lesssim  J_q^{\frac{1}{2}}+ \delta_{q+ 1}^{\frac12}+\ell_q^{-13}(1+\j)^{3}\lambda^{-\frac{1}{14}}_{q+1}.
\end{align}
For the velocity perturbation $ w_{q+1}$,
using \eqref{est-chiq}, \eqref{velocity perturbation}, \eqref{Lp decorr vel} and Lemma \ref{totalest}  we obtain
\begin{align}  \label{e3.41}
	\norm{ w_{q+1} }_{C_TL^2_{x}}&\leq \|\chi_{q+1}\|_{C_T} \norm{ w_{q+1}^{(p)} }_{C_TL^2_{x}} +\|\chi_{q+1}\|_{C_T}\norm{ w_{q+1}^{(c)} }_{C_TL^2_{x}}+\|\chi_{q+1}^2\|_{C_T}\norm{  w_{q+1}^{(t)} }_{C_TL^2_{x}}\notag\\
     &\lesssim  J_q^{\frac{1}{2}}+ \delta_{q+ 1}^{\frac12}+\ell_q^{-13}(1+\j)^{3}\lambda^{-\frac{1}{14}}_{q+1}+   \ell_q^{-2} (1+\j)^{3}r_{\perp} r_{\parallel}^{-1}+  \ell_q^{-4}(1+\j)^4 \mu^{-1} (\rp\rs^2)^{-\frac12}\notag\\
     & \lesssim J_q^{\frac{1}{2}}+ \delta_{q+1}^{\frac{1}{2}} +\ell_q^{-13}(1+\j)^{4}\lambda^{-\frac{1}{14}}_{q+1}.
\end{align}
Then, taking into account \eqref{uc},  \eqref{principal c1 est},
we get
\begin{align}\label{ver-u-l2-1}
	\norm{u_{q+1} -u_{q} }_{ C_TL^2_{x}}
	& =  \norm{ w_{q+1}}_{C_TL^2_{x}}  \lesssim J_q^{\frac{1}{2}}+ \delta_{q+1}^{\frac{1}{2}} +\ell_q^{-13}(1+\j)^{4}\lambda^{-\frac{1}{14}}_{q+1},
\end{align}
which along with \eqref{rl1s} and \eqref{rl1b} yields
\begin{align}\label{ver-u-l2-1-e}
\|  u_{q+1} -u_{q}  \|_{L^{2r}_{\Omega}C_TL^2_{x}} &\lesssim \norm{\mathring{R}_{q} }_{L^r_{\Omega}C_TL^1_{x}}^{\frac12} + \delta_{q+1}^{\frac{1}{2}}+\ell_q^{-13} \laq^{-\frac{1}{14}}\| \rr_q \|_{L^{8r}_{\Omega}C_TL^1_x}^{4}\notag\\
	&\lesssim  \delta_{q+1}^{\frac{1}{2}} + \ell_q^{-13} \laq^{-\frac{1}{14}}(12^q\cdot 8\cdot 8rL^2)^{4\cdot12^q}\notag\\
	&\lesssim \delta_{q+1}^{\frac{1}{2}}.
\end{align}
Here we also used $\lambda_{q}\geq (12^q\cdot 8\cdot 8rL^2)^{4\cdot12^q}$ in the last step.
This verifies \eqref{u-B-L2tx-conv}.

Moreover, we derive
\begin{align}\label{verubl2}
	&  \| u_{q+1} \|_{L^{2r}_{\Omega}C_TL^2_x}\leq \| u_q \|_{L^{2r}_{\Omega}C_TL^2_x}+\| u_{q+1} -u_{q}  \|_{L^{2r}_{\Omega}C_T L^2_{x}} \lesssim \sum_{n=0}^{q}\delta_{n+1}^{\frac{1}{2}},
\end{align}
and thus,
\eqref{ul2} is verified at level $q+1$.

Regarding the $C^1$-estimate \eqref{uc} of the velocity, by  \eqref{spa-regu-pointwise}, \eqref{spa-regu}, \eqref{tem-regu-1}, Sobolev's embedding $H^2_x\hookrightarrow L^\infty_x$ and Lemma~\ref{totalest} we derive
\begin{align}\label{verifyuc1}
     \norm{u_{q+1}}_{C^1_{[T_{q+3},T],x}}&  \lesssim  \norm{u_{q}}_{C^1_{[T_{q+3},T],x}}+\norm{ w_{q+1}}_{C^1_{T,x}}\notag\\
 &\lesssim \norm{u_{0}}_{C^1_{[T_{q+3},T],x}}+ \sum_{n=1}^{q}\norm{w_{n}}_{C^1_{T,x}} + \norm{w_{q+1}}_{C^1_{T,x}} \notag\\
 &\lesssim \norm{u_{0}}_{C^1_{[T_{q+3},T],x}}+ \sum_{n=1}^{q} \lambda_{n}^{4}(1+J_{n-1})^{ 6}+\lambda_{q+1}^{4}(1+\j)^{ 6}.
\end{align}
Note that, by \eqref{spa-regu} and \eqref{tem-regu-1}, taking $a$ large enough we have
\begin{align}\label{est-u0-c1}
    \| u_0 \|_{ L^{m}_{\Omega}C_{[T_{2},T],x}^{1}}& \lesssim \| \p_t u_0 \|_{L^{m}_{\Omega}C_{[T_{2},T],x} }+\| u_0 \|_{ L^{m}_{\Omega}C_{[T_{2},T]}C_x^{1}}\notag\\
    &\lesssim \| \p_t u_0 \|_{L^{m}_{\Omega}C_{[T_{2},T]}H^2_x }+\| u_0 \|_{L^{m}_{\Omega} C_{[T_{2},T]}H_x^{3}}\notag\\
     &\lesssim (1+T_2^{-2})(\|v_0 \|_{L^{m}_{\Omega}L^2_x }+\|v_0 \|_{L^{2m}_{\Omega}L^2_x }^2)+(1+T_2^{-\frac32})\| v_0 \|_{L^{m}_{\Omega}L_x^{2}}\notag\\
     &\lesssim (1+T_2^{-2})(M+M^2)\leq \lambda_0^4
\end{align}
and, similarly
\begin{align}\label{u0-est-tq2}
    \| u_0 \|_{ L^{m}_{\Omega}C_{[T_{q+3},T],x}^{1}}\lesssim (1+T_{q+3}^{-2})(\|v_0 \|_{L^{m}_{\Omega}L^2_x }+\|v_0 \|_{L^{2m}_{\Omega}L^2_x }^2)\lesssim (1+T_{q+2}^{-2})(M+M^2). 
\end{align}
Hence, taking the $m$-th moment and using \eqref{ini-bdd}, \eqref{a-big-lambda}, \eqref{rl1b} and \eqref{u0-est-tq2},  we get for $a$ sufficiently large that
\begin{align}\label{verifyuc1-m}
	 \| u_{q+1}\|_{L^{m}_{\Omega}C^1_{[T_{q+3},T],x}}
  & \lesssim \lambda_{q+1}(M+M^2)+ \sum_{n=1}^{q}\lambda_{n}^{4}(12^{n-1}\cdot 8\cdot 6mL^2)^{6\cdot 12^{n-1}}+  \lambda^{4}_{q+1}(12^q\cdot 8\cdot 6mL^2)^{6\cdot 12^q} \notag\\
 & \lesssim \lambda^{4}_{q+1}(12^q\cdot 8\cdot 6mL^2)^{6\cdot 12^q},
\end{align}
which verifies \eqref{uc} at level $q+1$.

\subsubsection{\bf Verification of inductive estimates
for energy} \label{Subsec-induc-energy}

Now let us treat the delicate energy iteration estimate \eqref{energy-est}. 
In the case where $t\in [T_{q+2},\frac{T_{q+2}+T_{q+1}}{2}]$, we have $u_{q+1}(t)=u_0(t)$. Hence, 
\begin{align}\label{est-e-110}
&\quad e(t) -\E\|u_{q+1}(t)+z_{q+1}(t)\|_{L_x^2}^2\notag\\
&= e(t) -\E\|u_{0}(t)+z(t)\|_{L_x^2}^2 
+2\E \int_{\T^3} u_0(t)\cdot ( z-z_{q+1})(t) \d x+ \E \int_{\T^3} ( z+ z_{q+1})\cdot ( z-z_{q+1})(t) \d x.
\end{align}
Using \eqref{def-ziq} and Proposition~\ref{Prop-noise}, we get
\begin{align}\label{est-e-111}
\left|2\E \int_{\T^3} u_0(t)\cdot ( z-z_{q+1})(t) \d x \right|
& \lesssim \|u_0\|_{L^2_{\Omega}C_TL^2} \| \mathbb{P}_{>\lambda_{q+2}^{\frac{\ve}{8}}}z \|_{L^2_{\Omega}C_TL^2_x} \notag\\
&\lesssim \lambda_{q+2}^{-\frac{\ve}{8}(1-\delta)} \|u_0\|_{L^2_{\Omega}C_TL^2_x}\| z \|_{L^2_{\Omega}C_TH^{1-\delta}_x} 
\notag \\ 
&\leq \frac{1}{8}\delta_{q+3}
\end{align}
and
\begin{align}\label{est-e-112}
\left|\E \int_{\T^3}  ( z+ z_{q+1})(t) \cdot ( z-z_{q+1})(t) \d x \right|& \lesssim \|z+ z_{q+1} \|_{L^2_{\Omega}C_TL^2} \| \mathbb{P}_{>\lambda_{q+2}^{\frac{\ve}{8}}}z \|_{L^2_{\Omega}C_TL^2_x}\notag\\
&\lesssim \lambda_{q+2}^{-\frac{\ve}{8}(1-\delta)} \|z\|_{L^2_{\Omega}C_TL^2_x}\| z \|_{L^2_{\Omega}C_TH^{1-\delta}_x}\notag\\
&\leq \frac{1}{8}\delta_{q+3},
\end{align}
for $a$ large enough. Therefore, by \eqref{asp-0-energy}, \eqref{asp-e-q}, \eqref{est-e-110}, \eqref{est-e-111} and \eqref{est-e-112} we have
\begin{align}\label{e-t-1}
\frac12\delta_{q+3}\leq  e(t) -\E\|u_{q+1}(t)+z_{q+1}(t)\|_{L_x^2}^2 \leq 2\delta_{q+ 2}.
\end{align}

Next, we treat the more delicate case where $t\in [\frac{T_{q+2}+T_{q+1}}{2},T]$.  By definition,
\begin{align}\label{ver-en-diff}
	& 	 e(t) -\E\|u_{q+1}(t)+z_{q+1}(t)\|_{L_x^2}^2 -f_q(t)\notag\\
   =& \Big( e(t) -\E\|u_{q}(t)+z_{q}(t)\|_{L_x^2}^2 -f_q(t)-\E\|\wt w_{q+1}^{(p)}(t)\|_{L^2_x}^2 \Big)   \notag \\
    &  + \Big( -2 \E \|  (\wt w^{(p)}_{q+1} \cdot \wt w_{q+1}^{(c)+(t)})(t)\|_{L^1_x}- \E\|( \wt w_{q+1}^{(c)+(t)})(t)\|_{ L^2_x}^2
	  - 2\E \int_{\T^3} (u_q(t)+z_{q+1}(t))\cdot w _{q+1}(t) \d x \notag\\
   &\qquad  -  \E \int_{\T^3} |z_{q+1}(t)|^2 -|z_{q}(t)|^2\d x+ 2\E \int_{\T^3}  u_q(t)\cdot z_{q}(t)- u_q(t) \cdot z_{q+1}(t)\d x\Big)\notag\\
	:=& I_1+I_2,
\end{align}
where $f_q$ is defined by \eqref{def-fq}.
Let us estimate $I_1$ and $I_2$ separately
in the following.

Concerning the estimate of $I_2$, in view of \eqref{e3.41}-\eqref{ver-u-l2-1-e} and Lemma~\ref{totalest}, we get
\begin{align}\label{ver-en-2}
	2\E \|\wt  w^{(p)}_{q+1}\wt  w_{q+1}^{(c)+(t)}\|_{C_TL^1_x}
	&\lesssim   \|  w^{(p)}_{q+1} \|_{L^2_{\Omega}C_TL^2_x}  \| w_{q+1}^{(c)+(t)} \|_{L^2_{\Omega}C_TL^2_x}\notag\\
 &\lesssim \delta_{q+1}^{\frac{1}{2}}( \ell_q^{-2} \laq^{-\frac27} (12^q\cdot 48 L^2)^{3\cdot 12^q} + \ell_q^{-4} \laq^{-\frac17} (12^q\cdot 64 L^2)^{4\cdot 12^q})\notag\\
 &\leq \frac{1}{96}\delta_{q+3}.
\end{align}
For the second term of $I_2$, similarly to \eqref{ver-en-2}, we have
\begin{align}\label{ver-en-3}
\E\|\wt w_{q+1}^{(c)+(t)}\|_{C_TL^2_x}^2
&\lesssim  \E\|  w^{(c)}_{q+1}\|_{C_TL^2_x}^2+\E\|  w^{(t)}_{q+1}\|_{C_TL^2_x}^2 \leq  \frac{1}{96}\delta_{q+3}.
\end{align}
Regarding the third term in $I_2$, by \eqref{uc}, Proposition~\ref{Prop-noise} and Lemma~\ref{totalest},
\begin{align}\label{ver-en-4}
	 \E\| (u_q+z_{q+1}) \cdot w _{q+1} \|_{C_{[\frac{T_{q+2}+T_{q+1}}{2},T]}L^1_x}
	&\lesssim \|  u_q+z_{q}\|_{L^2_{\Omega}C_{[\frac{T_{q+2}+T_{q+1}}{2},T],x}  }\| w_{q+1} \|_{L^2_{\Omega}C_TL^1_x }\notag\\
 &\quad+\| z_{q+1}-z_{q}\|_{L^2_{\Omega}C_TL_x^2 }\| w_{q+1} \|_{L^2_{\Omega}C_TL^2_x } \notag\\
	&\lesssim (\la^5+ \laq^{\frac{\ve}{8} }L) (\ell_q^{-2}\lambda_{q+1}^{-\frac87+\ve}(12^q\cdot 32 L^2)^{2\cdot12^q} )+  \laq^{-\frac{\ve}{8}(1-\delta)}L \delta_{q+1}^{\frac12} \notag\\
	& \leq  \frac{1}{96}\delta_{q+3},
\end{align}
where we also choose $a$ sufficiently large such that $\lambda_{q}\geq (12^{q}\cdot 8L^2)^{ 12^{q}}$.
Finally, using \eqref{ul2}, \eqref{uc} and Proposition~\ref{Prop-noise}, the
last two terms of $I_2$ can be bounded by:
\begin{align}\label{ver-en-5}
	&\quad \E \|(z_{q+1}+z_{q})\cdot(z_{q+1}-z_{q})(t)\|_{L^1_x} + 2\E\|(z_{q+1}-z_{q} )(t)\cdot u_q(t)\|_{L^1_x}\notag\\	
	&\lesssim  \| (z_{q+1}-z_{q})(t)\|_{L^2_{\Omega}L^2_x } (\| u_q(t)\|_{L^2_{\Omega}L^2_x }+\| z_{q}(t) \|_{L^2_{\Omega}L^2_x } +\| z_{q+1}(t)\|_{L^2_{\Omega}L^2_x }  ) \notag\\
	&\lesssim \laq^{-\frac{\ve}{8}(1-\delta)}L( \sum\limits_{n=1}^q \delta_{n}^{\frac12}+L)\leq \frac{1}{96}\delta_{q+ 3}.
\end{align}
Thus, combining \eqref{ver-en-2}-\eqref{ver-en-5} altogether, we arrive at
\begin{align}\label{ver-en-i2}
	|I_2|\leq \frac{1}{24}\delta_{q+ 3}.
\end{align}

It remains to estimate the first term $I_1$ on the right-hand side of \eqref{ver-en-diff}.
Using \eqref{vel oscillation cancellation calculation} and the fact that $\mathring{R}_{\ell_q}$ is trace free, we get
\begin{align*}
    |\wt w_{q+1}^{(p)}(t)|^2= 3\cq^2 (\rho+\gamma_q)+ \sum_{k \in \Lambda  } \cq^2a_{(k)}^2(t)P_{\neq 0}(\left|W_{(k)}(t)\right|^2).
\end{align*}
Hence, we have
\begin{align}\label{ver-en-diff-3}
	I_1 	& =  e(t) -\E\|u_{q}(t)+z_{q}(t)\|_{L_x^2}^2 -f_q(t)-\E\|\wt w_{q+1}^{(p)}(t)\|_{L^2_x}^2\notag\\
	& = \( e(t) -\E\|u_{q}(t)+z_{q}(t)\|_{L_x^2}^2 -f_q(t)-\chi_{q+1}^2(t)\big( e -f_q-\E\|u_{q}+z_{q}\|_{L_x^2}^2 \big )*_{t}  \varphi_{\ell_q} (t) \) \notag\\
	&\quad+\(\chi_{q+1}^2(t)\E\int_{\T^3}6\ve_u^{-1} (\ell_q^2+ |\mathring{R}_{\ell_q}(t) |^2)^{\frac12}+\sum_{k \in \Lambda  } a_{(k)}^2(t) P_{\neq 0}(\left|W_{(k)}(t)\right|^2) \d x\)\notag\\
	&:=I_{11}+I_{12}.
\end{align}

In order to estimate the right-hand side of \eqref{ver-en-diff-3}, we divide $[\frac{T_{q+2}+T_{q+1}}{2},T]$
into three regimes
$$[\frac{T_{q+2}+T_{q+1}}{2},T]= \bigcup_{i=1}^3 J_i,$$
where $J_1:= [T_{q+1}+\ell_q,T]$, $J_2:=[T_{q+1}, T_{q+1}+\ell_q]$ and $J_3:=[\frac{T_{q+2}+T_{q+1}}{2},T_{q+1}]$.

{\bf $\bullet$ Estimate of $J_1$ regime.} Let us first treat the case where $t\in J_1$. In this case, by definition, we have
\begin{align*}
    \chi_{q+1}(t)=1,\quad f_q(t)- (f_q*_{t}\varphi_{\ell_q}) (t)=0.
\end{align*}
Regarding the second term $I_{12}$ on the right-hand side of \eqref{ver-en-diff-3}, using Lemmas~\ref{mae} and \ref{lem-mean} we have
\begin{align}\label{est-en-high}
	\E\sum_{k \in \Lambda   } \int_{\T^3} a_{(k)}^2 P_{\neq 0}( \left|W_{(k)}\right|^2) \d x & \lesssim \E\sum_{k \in \Lambda   } \sigma^{-1}\|\nabla (a_{(k)}^2)\|_{C_{T,x}} \|P_{\neq 0}( \left|W_{(k)}\right|^2)\|_{L^1_x} \notag\\
    & \lesssim \laq^{-\frac17}\ell_q^{-20}( 12^q\cdot 40L^2)^{5\cdot 12^q}\leq \frac{1}{48} \delta_{q+ 3} .
\end{align}
By \eqref{rl1s}, we get
\begin{align}\label{prin-end}
	6\ve_u^{-1}( \ell_q+  \E\|\mathring{R}_{\ell_q}(t)\|_{L^1_x})\leq 6\ve_u^{-1}(  \ell_q+ c_*\delta_{q+2}) \leq \frac{1}{48} \delta_{q+ 2}.
\end{align}
Hence, combining \eqref{est-en-high} and \eqref{prin-end} altogether we obtain
\begin{align}\label{est-j2}
	|I_{12}| \leq \frac{1}{24} \delta_{q+2}.
\end{align}
For the first term $I_{11}$ on the right-hand side of \eqref{ver-en-diff-3}, by \eqref{uc}, Proposition~\ref{Prop-noise} and the standard mollification estimates, we get
\begin{align}\label{est-moll-uqzq}
	&\quad(\E\|u_q(t)+z_{q}(t)\|_{L^2}^2)*_{t} \varphi_{\ell_q}- \E\|u_q(t)+z_{q}(t)\|_{L^2}^2  \notag\\
	&\lesssim \ell_q^{\frac12-\delta} \E( \|u_q+z_{q}\|_{C^{\frac12-\delta}_{[t-\ell_q,t]}L^2_x}\|u_q+z_{q}\|_{C_{[t-\ell_q,t]}L^2_x})\notag\\
	&\lesssim \ell_q^{\frac12-\delta} (\la^4(12^{q-1}\cdot 8\cdot 12L^2)^{6\cdot 12^{q-1}} +L )(\sum\limits_{n=1}^q \delta_{n}^{\frac12}   +L)\leq \frac{1}{48} \delta_{q+3},
\end{align}
and 
\begin{align}\label{est-e}
	|    e(t) - (e)*_t \varphi_{\ell_q}(t)|\lesssim \ell_q \|e\|_{C_T^1}\lesssim \frac{1}{48} \delta_{q+3},
\end{align}
for  $a$ sufficiently large.
Thus \eqref{est-moll-uqzq} and \eqref{est-e} together yield
\begin{align}\label{est-j1}
    |I_{11}|\leq \frac{1}{24}\delta_{q+ 3}.
\end{align}
Putting \eqref{ver-en-diff}, \eqref{ver-en-i2}, \eqref{ver-en-diff-3}, \eqref{est-j2} and \eqref{est-j1} altogether we obtain
\begin{align*}
|  e(t) -\E\|u_{q+1}(t)+z_{q+1}(t)\|_{L_x^2}^2 -\frac12 \delta_{q+2}| \leq \frac{1}{24}\delta_{q+2}+ \frac{1}{12}\delta_{q+3},
\end{align*}
which yields that
\begin{align}\label{e-t-2}
    \frac14 \delta_{q+2}\leq  e(t) -\E\|u_{q+1}(t)+z_{q+1}(t)\|_{L_x^2}^2 \leq \delta_{q+2}.
\end{align}

{\bf $\bullet$ Estimate of $J_2$ regime.} In the case where $t\in J_2$, by definition,
\begin{align*}
    \chi_{q+1}(t)=1,\quad 0\leq f_q(t)- (f_q(t))*_{t}\varphi_{\ell_q}\leq \frac{1}{2}\delta_{q+2}-\frac34\delta_{q+3}.
\end{align*}
Since \eqref{est-j2}, \eqref{est-moll-uqzq} and \eqref{est-e} still hold in this case, we deduce that
\begin{align}\label{i11-t-3}
-\frac{1}{24} \delta_{q+3} \leq I_{11} \leq \frac{1}{2} \delta_{q+2},\quad |I_{12}| \leq \frac{1}{24}\delta_{q+2}.
\end{align}
Therefore, plugging \eqref{ver-en-i2}, \eqref{est-j2} and \eqref{i11-t-3} into \eqref{ver-en-diff} we obtain
\begin{align}\label{e-t-3}
    \frac14 \delta_{q+2}\leq e(t) -\E\|u_{q+1}(t)+z_{q+1}(t)\|_{L_x^2}^2 \leq \frac32\delta_{q+2}.
\end{align}

{\bf $\bullet$ Estimate of $J_3$ regime.} In the case where $t\in J_3$, we note that there is no perturbation added to the approximate solution $u_{q-1}$ when $t\in [0,T_{q+1}]$ at level $q$, hence for $t\in J_3$ we have $\rr_q(t)= \rr_0(t)$ and $u_q(t) = u_0(t)$.

Similarly to \eqref{est-en-high} and \eqref{prin-end}, using the choice of $T_{q+1}$ in \eqref{asp-0} and \eqref{asp-q} we get
\begin{align}\label{est-i12-3}
   |I_{12}| &\leq \chi_{q+1}^2\( \E\sum_{k \in \Lambda   } \int_{\T^3} a_{(k)}^2 P_{\neq 0}( \left|W_{(k)}\right|^2) \d x+ 6\ve_u^{-1}( \ell_q+  \E\|\mathring{R}_{\ell_q}(t)\|_{L^1_x})\)\notag\\
   &\leq \chi_{q+1}^2\(\frac{1}{48}\delta_{q+3} +6\ve_u^{-1}(  \ell_q+ c_*\delta_{q+3})\)\notag\\
   &\leq \frac{1}{24} \chi_{q+1}^2\delta_{q+3}.
\end{align}
Concerning $I_{11}$, similar to \eqref{est-e-110} we derive
\begin{align}\label{est-e-130}
&\quad  e(t) -\E\|u_{q}(t)+z_{q}(t)\|_{L_x^2}^2 \notag\\
&=  e(t) -\E\|u_{0}(t)+z(t)\|_{L_x^2}^2 
+2\E \int_{\T^3} u_0(t)\cdot ( z-z_{q})(t) \d x+ \E \int_{\T^3} ( z+ z_{q})\cdot ( z-z_{q})(t) \d x.
\end{align}
By \eqref{def-ziq} and Proposition~\ref{Prop-noise}, we get
\begin{align}\label{est-e-131}
\left|2\E \int_{\T^3} u_0(t)\cdot ( z-z_{q})(t) \d x \right|& \lesssim  \lambda_{q+1}^{-\frac{\ve}{8}(1-\delta)} \|u_0\|_{L^2_{\Omega}C_TL^2_x}\| z \|_{L^2_{\Omega}C_TH^{1-\delta}_x}\leq \frac{1}{48}\delta_{q+3},
\end{align}
and
\begin{align}\label{est-e-132}
\left|\E \int_{\T^3}  ( z+ z_{q})(t) \cdot ( z-z_{q})(t) \d x \right|
\lesssim \lambda_{q+1}^{-\frac{\ve}{8}(1-\delta)} \|z\|_{L^2_{\Omega}C_TL^2_x}\| z \|_{L^2_{\Omega}C_TH^{1-\delta}_x}
\leq \frac{1}{48}\delta_{q+3},
\end{align}
for $a$ large enough. Hence, by \eqref{asp-0-energy} and \eqref{asp-e-q} we get
\begin{align}\label{est-e-133}
    \frac{17}{24}\delta_{q+3}\leq e(t) -\E\|u_{q}(t)+z_{q}(t)\|_{L_x^2}^2 \leq \delta_{q+2}+\frac{1}{24}\delta_{q+3}.
\end{align}
Using \eqref{est-e-133}, \eqref{def-fq}, \eqref{est-moll-uqzq} and \eqref{est-e}  we get\begin{align}\label{j1-2-low}
	I_{11}& = (1-\chi_{q+1}^2)(e(t) -\E\|u_{q}(t)+z_{q}(t)\|_{L_x^2}^2)- \frac34\delta_{q+ 3} +\chi_{q+1}^2 f_q*_{t}  \varphi_{\ell_q}\notag\\
	&\quad+\chi_{q+1}^2\(  e(t)  - \E\|u_{q}(t)+z_{q}(t)\|_{L_x^2}^2  -\big(  (e - \E\|u_{q}+z_{q}\|_{L_x^2}^2)\big)*_{t}  \varphi_{\ell_q}\)\notag\\
	&\geq(1-\chi_{q+1}^2) \frac{17}{24}\delta_{q+3} - \frac34\delta_{q+ 3} +\chi_{q+1}^2 \frac34\delta_{q+ 3}- \chi_{q+1}^2\frac{1}{24} \delta_{q+3}\geq -\frac{1}{24} \delta_{q+3},
\end{align}
and
\begin{align}\label{j1-2-up}
	I_{11}
 \leq(1-\chi_{q+1}^2) (\delta_{q+2}+\frac{1}{24}\delta_{q+3})  - \frac34\delta_{q+ 3} +\chi_{q+1}^2 \frac34\delta_{q+ 3}+ \chi_{q+1}^2\frac{1}{24} \delta_{q+3}\leq \delta_{q+2}.
\end{align}
Plugging \eqref{ver-en-i2}, \eqref{est-i12-3}, \eqref{j1-2-low} and \eqref{j1-2-up} into \eqref{ver-en-diff}, we arrive at
\begin{align}\label{e-t-4}
    \frac12 \delta_{q+3}\leq  e(t) -\E\|u_{q+1}(t)+z_{q+1}(t)\|_{L_x^2}^2 \leq \frac32\delta_{q+2}.
\end{align}
Thus, \eqref{e-t-1}, \eqref{e-t-2}, \eqref{e-t-3} and \eqref{e-t-4} together verify \eqref{energy-est} at level $q+1$.

Finally, the inductive estimates \eqref{ul2}, \eqref{uc}, \eqref{energy-est}  and \eqref{u-B-L2tx-conv}  are verified.

\section{Reynolds stress}   \label{Sec-Rey-stress}
In this section, we treat the delicate Reynolds stresses
in the main iteration of Proposition \ref{Prop-Iterat}.
Let us first decompose the Reynolds stress by
\begin{align}\label{ru}
		&\displaystyle\div\mathring{R}_{q+1}  - \nabla P_{q+1}  \notag\\
		&\displaystyle = \underbrace{\chi_{q+1} \partial_t (w_{q+1}^{(p)}+w_{q+1}^{(c)} ) -\nu  \Delta w_{q+1}  +\div \big((u_{q}+\zq) \otimes w_{q+1} + w_{q+ 1} \otimes (u_{q}+\zq)\big) }_{ \div\mathring{R}_{lin} +\nabla P_{lin} }   \notag\\
		&\displaystyle\quad+ \underbrace{\div (\wt  w_{q+1}^{(p)} \otimes \wt  w_{q+1}^{(p)}  +  \chi_{q+1}^2\mathring{R}_{q} )+ \chi_{q+1}^2\partial_t w_{q+1}^{(t)} }_{\div\mathring{R}_{osc}  +\nabla P_{osc}}  \notag\\
		&\quad+\underbrace{\div\Big(\wt w_{q+1}^{(c)+(t) }\otimes w_{q+1}+ \wt  w_{q+1}^{(p) }\otimes   \wt w_{q+1}^{(c)+(t) } \Big)}_{\div\mathring{R}_{cor}  +\nabla P_{cor}}\notag\\
		&\quad +\underbrace{\div \( u_{q+1}\otimes (z_{q+1}-\zq)+(z_{q+1}-\zq) \otimes u_{q+1} +z_{q+1}\otimes z_{q+1}-\zq\otimes \zq\)}_{\div\mathring{R}_{com} +\nabla P_{com}} \notag\\
		&\quad +  \underbrace{  \partial_t\chi_{q+1}(w^{(p)}_{q+ 1}+w^{(c)}_{q+ 1})+\partial_t(\chi_{q+1}^2)w^{(t)}_{q+ 1} +  (1-\chi_{q+1}^2)\div \mathring{R}_{q} }_{\div\mathring{R}_{cut} +\nabla P_{cut}} .
\end{align}

Then, using the inverse divergence operator $\mathcal{R}$ in \eqref{operaru}
we choose the Reynolds stress at level $q+1$
\begin{align}\label{rucom}
	\mathring{R}_{q+1}  := \mathring{R}_{lin}  +   \mathring{R}_{osc} + \mathring{R}_{cor} +\mathring{R}_{com} +\mathring{R}_{cut} ,
\end{align}
where the linear error
\begin{align}\label{rbp}
\mathring{R}_{lin}:= &  \mathcal{R} \( \chi_{q+1}\partial_t (w^{(p)}_{q+ 1}+w^{(c)}_{q+ 1}) \)- \nu \mathcal{R} \Delta w_{q+1}  +  (u_{q}+\zq) \mathring\otimes w_{q+1} + w_{q + 1} \mathring\otimes (u_{q}+\zq),
\end{align}
the oscillation error
\begin{align} \label{rob}
	\mathring{R}_{osc}  &:=  \chi_{q+1}^2\sum_{k \in \Lambda}\mathcal{R}\P_{\neq 0}\left ( \P_{\neq 0}(W_{(k)}\otimes W_{(k)})\nabla (a_{(k)}^2)\right) -\mu^{-1} \chi_{q+1}^2\sum_{k \in \Lambda}\mathcal{R}\P_{\neq 0}(\p_t (a_{(k)}^2)\phi_{(k_1)}^2\phi_{(k)}^2k_1)\notag\\
	&\quad +\cq^2(\mathring{R}_{q}-\mathring{R}_{\ell_q}) ,
\end{align}
the corrector error
\begin{align}\label{rbp2}
	\mathring{R}_{cor}  := &\,  \wt w_{q+1}^{(p)} \mathring\otimes \wt w_{q+1}^{(c)+(t)} +\wt w_{q+1}^{(c)+(t)} \mathring\otimes w_{q+1},
\end{align}
the commutator error
\begin{align} \label{Rucom-def}
	\mathring{R} _{com}&:=  u_{q+1}\mathring\otimes (z_{q+1}-\zq)+(z_{q+1}-\zq)\mathring \otimes u_{q+1} +z_{q+1}\mathring\otimes z_{q+1}-\zq\mathring\otimes \zq,
\end{align}
and the cut-off error
\begin{align} \label{RBcut-def}
	\mathring{R}_{cut}
	& := \mathcal{R}   \(  \partial_t\chi_{q+1}(w^{(p)}_{q+ 1}+w^{(c)}_{q+ 1})+\partial_t(\chi_{q+1}^2)w^{(t)}_{q+ 1}\) +  (1-\chi_{q+1}^2)\mathring{R}_{q}.
\end{align}

\subsection{Verification of decay estimate} \label{sucsec-est-r-s}
We aim to estimate the above five components of Reynolds stress
and to verify the decay estimate \eqref{rl1s} at level $q+1$.

Since the Calder\'on-Zygmund operators are bounded in the spaces $L^\rho_x$, $1<\rho<\infty$, we estimate the Reynolds stress in $L^p_x$ instead of $L^1_x$ where
\begin{align}\label{def-p}
p: =\frac{16 }{16-7\varepsilon}\in (1,2),
\end{align}
with $\ve$ given by \eqref{b-beta-ve}. Note that, $(2+40\varepsilon)(1-\frac{1}{p})=\ve$,
and thus, by \eqref{larsrp} we have
\begin{align}  \label{rs-rp-p-ve}
	(\rp\rs^2)^{\frac 1p-1} = \lambda^{\varepsilon},
	\quad (\rp\rs^2)^{\frac 1p-\frac 12} = \lambda^{-\frac87+\varepsilon}.
\end{align}

Below we consider the estimates of Reynolds errors in two temporal regimes 
$[0,T]=[\frac{T_{q+2}+T_{q+1}}{2},T] \cup [0,\frac{T_{q+2}+T_{q+1}}{2}]$. 
Let us first treat the difficult regime away from the initial time.   

\subsubsection{The temporal regime 
away from the initial time $I_{1}:=[\frac{T_{q+2}+T_{q+1}}{2},T]$}

\medskip
\paragraph{\bf Linear error.}
Note that,
by Lemmas  \ref{buildingblockestlemma} and \ref{mae}, \eqref{larsrp}, \eqref{div free velocity} and \eqref{rs-rp-p-ve},
\begin{align}\label{Rlin-t}
	\| \cq \mathcal{R}\partial_t( w_{q+1}^{(p) }+w_{q+1}^{(c)})\|_{L_x^p}
	\lesssim&\  \sum_{k \in \Lambda }\| \mathcal{R} \curl\partial_t( a_{(k)} W^c_{(k)})\|_{L_x^p}  \nonumber \\
	\lesssim&\ \sum_{k \in \Lambda }\( \|  a_{(k)} \|_{C_{T,x}^1}\| W^c_{(k)} \|_{C_T L^{p} }
	+\|  a_{(k)} \|_{C_{T,x} }\| \p_t W^c_{(k)} \|_{C_T L^{p} }\) \notag\\
	\lesssim&\  \ell_q^{-13}(1+\j)^3\lambda_{q+1}^{-1}(\rp\rs^2)^{\frac 1p-\frac 12}
	+\ell_q^{-2}(1+\j)^\frac12\lambda_{q+1}^{-1}\sigma\rp^{-1}\mu(\rp\rs^2)^{\frac 1p-\frac 12}\notag\\
	\lesssim&\   \ell_q^{-2}(1+\j)^3\lambda_{q+1}^{-\frac17+\ve}.
\end{align}

Regarding the viscosity term, an application of Lemma \ref{totalest} gives
\begin{align}\label{viscosity}
	\norm{\nu\mathcal{R}(-\Delta) w_{q+1}}_{ L^p_x}
	& \lesssim\norm{ \mathcal{R}(-\Delta)\wt  w_{q+1}^{(p)}  }_{L^p_x} + \norm{ \mathcal{R}(-\Delta) \wt w_{q+1}^{(c)}  }_{ L^p_x}
     + \norm{ \mathcal{R}(-\Delta) \wt w_{q+1}^{(t) }}_{ L^p_x}\notag\\
	&\lesssim (1+\j)^{5}\Big( \ell_q^{-2}  \lambda_{q+1} (\rp \rs^2)^{\frac{1}{p}-\frac12} + \ell_q^{-2}  \lambda_{q+1} \rs \rp ^{-1}(\rp \rs^2)^{\frac{1}{p}-\frac12}+  \ell_q^{-4}\mu^{-1}  \lambda_{q+1} (\rp \rs^2)^{\frac{1}{p}-1}\Big)\notag\\
	&\lesssim (1+\j)^{5}(\ell_q^{-2}\lambda_{q+1}^{-\frac17+\ve}+\ell_q^{-2}\lambda_{q+1}^{-\frac37+\ve}+\ell_q^{-4}\lambda_{q+1}^{-\frac27+\ve})\notag\\
	&\lesssim\ell_q^{-2}(1+\j)^{5}\lambda_{q+1}^{-\frac17+\ve} .
\end{align}

Concerning the nonlinear terms in \eqref{rbp}, we have
\begin{align} \label{linear estimate1}
	&\norm{   (u_{q}+\zq) \mathring\otimes w_{q+1} + w_{q + 1} \mathring\otimes (u_{q}+\zq)}_{ L^p_x}  \nonumber \\	
	\lesssim\,& \norm{u_{q}+\zq}_{L^\infty_x} \norm{ w_{q+1}}_{ L^p_x}  \nonumber \\
	\lesssim\, &  (1+\j)^{4} (  \|u_q\|_{L^\infty_x} + \|z_{q}\|_{L^\infty_x}) (\ell_q^{-2}\lambda_{q+1}^{-\frac87+\ve}+\ell_q^{-2}\lambda_{q+1}^{-\frac{10}{7}+\ve}+\ell_q^{-4}\lambda_{q+1}^{-\frac97+\ve})\notag\\
	\lesssim\, &\ell_q^{-2} (1+\j)^{4}\lambda_{q+1}^{-\frac87+\ve} ( \|u_q\|_{L^\infty_x} + \|z_{q}\|_{L^\infty_x}).
\end{align}

Therefore, combining \eqref{Rlin-t}-\eqref{linear estimate1} together
and using  \eqref{b-beta-ve} we come to
\begin{align}   \label{linear estimate}
	\norm{\mathring{R}_{lin}}_{L^p_x}
	& \lesssim \ell_q^{-2}(1+\j)^{5}\lambda_{q+1}^{-\frac17+\ve}+\ell_q^{-2} (1+\j)^{4}\lambda_{q+1}^{-\frac87+\ve} ( \|u_q\|_{L^\infty_x} + \|z_{q}\|_{L^\infty_x}).
\end{align}

\paragraph{\bf Oscillation error.}
Now let us treat the delicate oscillation errors.
We decompose
\begin{align*}
	\mathring{R}_{osc}  = \mathring{R}_{osc.1}  +  \mathring{R}_{osc.2} +  \mathring{R}_{osc.3},
\end{align*}
where $\mathring{R}_{osc.1} $ contains the low-high spatial  oscillations
\begin{align*}
	\mathring{R}_{osc.1}
	&:=  \cq^2 \sum_{k \in \Lambda } \mathcal{R} \P_{\neq 0}\left(\P_{\neq 0}(W_{(k)}\otimes W_{(k)})\nabla (a_{(k)}^2)\right) ,
\end{align*}
$\mathring{R}_{osc.2} $ contains the high temporal oscillation
\begin{align*}
	\mathring{R}_{osc.2}
	&:= -\mu^{-1}\cq^2 \sum_{k \in \Lambda}\mathcal{R}   \P_{\neq 0}\(\p_t (a_{(k)}^2)\psi_{(k )}^2\phi_{(k)}^2  k_1\),
\end{align*}
and  $\mathring{R}_{osc.3} $ contains the mollification error
\begin{align*}
	\mathring{R}_{osc.3}
	&:= \cq^2 (\mathring{R}_{q}-\mathring{R}_{\ell_q}).
\end{align*}

For the  low-high spatial  oscillations  $\mathring{R}_{osc.1}  $,
we apply Lemma~\ref{commutator estimate1} in the Appendix
with $a = \nabla (a_{(k)}^2)$ and $f =  \psi_{(k_1)}^2\phi_{(k)}^2 $ and use Lemmas \ref{buildingblockestlemma} and \ref{mae} to get
\begin{align}  \label{I1-esti}
	\norm{\mathring{R}_{osc.1} }_{ L^p_x}
	& \lesssim  \sum_{ k \in \Lambda } \sigma^{-1} \norm{ \cq^2}_{C_{T}} \norm{ \na^3(a^2_{(k)})}_{C_{T,x}}
	\norm{\psi_{(k_1)}^2\phi_{(k)}^2  }_{C_TL^p_x }  \nonumber  \\
	& \lesssim  \ell_q^{-32}(1+\j)^7 \sigma^{-1} \norm{ \psi^2_{(k_1)}}_{C_TL^p_x } \norm{\phi^2_{(k)} }_{C_TL^p_x }  \nonumber  \\
	& \lesssim  \ell_q^{-32}(1+\j)^7\sigma^{-1}  (\rp\rs^2)^{\frac{1}{p}-1}\lesssim \ell_q^{-32}(1+\j)^7\lambda_{q+1}^{-\frac17+\ve}.
\end{align}
Concerning $\mathring{R}_{osc.2}  $, we use the large parameter $\mu$ to balance the
the high temporal oscillations arising from $\psi_{(k_1)}$, that is,
\begin{align}  \label{I2-esti}
	\norm{\mathring{R}_{osc.2}}_{ L_x^p}
	&\lesssim \mu^{-1} \norm{ \cq^2}_{C_{T}} 	\norm{\p_t (a_{(k)}^2) }_{C_{T,x}}
	\norm{\psi_{(k_1)}}_{C_TL^{2p}_x }^2\norm{\phi_{(k)}}_{L^{2p}_x }^2\nonumber \\
	&\lesssim  \ell_q^{-15} (1+\j)^4\mu^{-1} (\rp\rs^2)^{\frac{1}{p}-1}\notag \\
 &\lesssim \ell_q^{-15}(1+\j)^4\lambda_{q+1}^{-\frac97+\ve}.
\end{align}
Regarding the mollification error $\mathring{R}_{osc.3}$, by \eqref{rl1b-s} we get
\begin{align}  \label{I3-esti}
	\norm{\mathring{R}_{osc.3}  }_{ L_x^1}
	&\lesssim \norm{ \cq^2}_{C_{T}}  \norm{\mathring{R}_{q}-\mathring{R}_{\ell_q}}_{L_x^1}\lesssim  \ell_q^{\frac12-\delta} ( \|\mathring{R}_{q} \|_{  C^{\frac12-\delta}_{[T_{q+2},T]}L^1_x }+	\| \mathring{R}_{q}  \|_{C_{[T_{q+2},T]}W^{1-\delta,1}_x} ).
\end{align}
Therefore, putting estimates \eqref{I1-esti}-\eqref{I3-esti} altogether, we come to
\begin{align}
	\label{oscillation estimate}
	\norm{\mathring{R}_{osc} }_{L^1_x}
	&\lesssim \ell_q^{-32}(1+\j)^7\lambda_{q+1}^{-\frac17+\ve} +\ell_q^{\frac12-\delta} (\|\mathring{R}_{q} \|_{  C^{\frac12-\delta}_{[T_{q+2},T]}L^1_x }+
	\| \mathring{R}_{q}  \|_{C_{[T_{q+2},T]}W^{1-\delta,1}_x} ).
\end{align}

\paragraph{\bf Corrector error.}
The corrector error can be bounded directly by the
H\"older inequality, Lemma \ref{totalest}, \eqref{Lp-wdp-1} and \eqref{e3.41}:
\begin{align}\label{corrector estimate}
	\norm{\mathring{R}_{cor} }_{ L^{1}_x}
	\lesssim& \norm{  w_{q+1}^{(c)+(t)}}_{C_T L^{2}_x} (\norm{  w^{(p)}_{q+1}  }_{C_TL^{2}_{x}} + \norm{w_{q+1} }_{C_TL^{2}_{x}}) \notag\\
	\lesssim& \ell_q^{-8} (1+\j)^{8} \( r_{\perp} r_{\parallel}^{-1}
	+  \mu^{-1} (\rp\rs^2)^{-\frac12} \)\(1+r_{\perp} r_{\parallel}^{-1}
	+  \mu^{-1} (\rp\rs^2)^{-\frac12}  \)  \notag\\
	\lesssim& \ell_q^{-8}(1+\j)^{8} \lambda_{q+1}^{-\frac17} .
\end{align}

\paragraph{\bf Commutator error.}
By \eqref{def-ziq}  we derive
\begin{align}\label{est-zq1-zq}
	 \| z_{q+1}-\zq \|_{L^{2}_x} \lesssim \laq^{-\frac{\ve}{8}(1-\delta)} \|\zq\|_{ H^{1-\delta}_x}.
\end{align}

Thus, using \eqref{est-zq1-zq} we have
\begin{align}\label{R-com-esti}
	\|\mathring{R} _{com}\|_{ L^1_x}	&\leq\|u_{q+1}\mathring\otimes (z_{q+1}-\zq)+(z_{q+1}-\zq) \otimes u_{q+1} +z_{q+1}\mathring\otimes z_{q+1}-\zq\mathring\otimes \zq\|_{L^1_x}\notag\\
	&\lesssim  (\|u_{q+1}\|_{ L^2_x}+ \|z_{q+1}\|_{ L^2_x}+ \| \zq\|_{ L^2_x})\|z_{q+1}-\zq\|_{ L^2_x}\notag\\
	&\lesssim \laq^{-\frac{\ve}{8}(1-\delta)} \|\zq\|_{H^{1-\delta}_x}  (\|u_q\|_{ L^2_x}+\|w_{q+1}\|_{ L^2_x}+ \|z\|_{ L^2_x} ).
\end{align}

Now, combining  \eqref{linear estimate},
\eqref{oscillation estimate},
\eqref{corrector estimate}
and \eqref{R-com-esti} altogether,  we arrive at
\begin{align} \label{rq1-1}
	&\quad\| \mathring{R}_{lin}  \|_{L^r_{\Omega}C_{I_1} L^p_x} +  \| \mathring{R}_{osc} \|_{L^r_{\Omega}C_{I_1}L^p_x}
	+  \|\mathring{R}_{cor}  \|_{L^r_{\Omega}C_{I_1}L^{ 1}_x} +\|\mathring{R}_{com} \|_{L^r_{\Omega}C_{I_1}L^1_x} \nonumber  \\
&\lesssim  \ell_q^{-2}\lambda_{q+1}^{-\frac17+\ve}(1+ \|\rr_q\|_{L^{5r}_{\Omega}C_TL^1_x}^5)+ \ell_q^{-4}\lambda_{q+1}^{-\frac87+\ve}(1+ \|\rr_q\|_{L^{8r}_{\Omega}C_TL^1_x}^4) ( \|u_q\|_{L^{2r}_{\Omega}C_{I_1}L^\infty_x} + \|z_{q}\|_{L^{2r}_{\Omega}C_{I_1}L^\infty_x})\notag\\
&\quad +\ell_q^{-32}\lambda_{q+1}^{-\frac17+\ve}(1+ \|\rr_q\|_{L^{7r}_{\Omega}C_TL^1_x}^7)+\ell_q^{\frac12-\delta} (\|\mathring{R}_{q} \|_{ L^{r}_{\Omega} C^{\frac12-\delta}_{[T_{q+2},T]}L^1_x }+
	\| \mathring{R}_{q}  \|_{L^{r}_{\Omega}C_{[T_{q+2},T]}W^{1-\delta,1}_x} )  \notag\\
 &\quad + \laq^{-\frac{\ve}{8}(1-\delta)} \|\zq\|_{L^{2r}_{\Omega}C_{I_1}H^{1-\delta}_x}  (\|u_q\|_{ L^{2r}_{\Omega}C_{I_1}L^2_x}+\|w_{q+1}\|_{L^{2r}_{\Omega}C_{I_1}L^2_x}+ \|z\|_{L^{2r}_{\Omega}C_{I_1} L^2_x} ) \notag\\
	&\leq  \frac14 c_*\delta_{q+3},
\end{align}
for $a$ large enough, where we used $\delta<{1}/{6}$ and $( 12^{q+1}\cdot 8 rL^2)^{ 12^{q+1} } \leq \lambda_{q}$ (when $a$ is large enough) in the last step.

\paragraph{\bf Cut-off error.}
In the subcase where $t\in K_1:=[T_{q+1},T]$, we have $\mathring{R}_{cut}(t)=0$.
Moreover, in the other subcase $t\in K_2:= [\frac{T_{q+2}+T_{q+1}}{2},T_{q+1}]$, using \eqref{rl1b}, \eqref{asp-0} and \eqref{asp-q}, for $a$ large enough we derive
\begin{align}  \label{R-cut-esti-1}
	\|\mathring{R}_{cut} \|_{L^r_{\Omega}C_{K_2}L^1_x}
	&\lesssim \|\mathcal{R} \(  \partial_t\chi_{q+1}(w^{(p)}_{q+ 1}+w^{(c)}_{q+ 1})+\partial_t(\chi_{q+1}^2)(w^{(t)}_{q+ 1})\)+ (1-\chi_{q+1}^2 )\mathring{R}_{\ell_q} \|_{L^r_{\Omega}C_{K_2}L^1_x} \notag \\
	&\lesssim \ell_q^{-5}(1+\| \mathring R_{q}\|_{L^{4r}_{\Omega}C_TL^{1}_{x}}^4) (\lambda_{q+1}^{-\frac87+\ve}+\lambda_{q+1}^{-\frac{10}{7}+\ve}+\lambda_{q+1}^{-\frac{9}{7}+\ve} )+\| \mathring R_{q}\|_{L^r_{\Omega}C_{[T_{q+2},T_{q+1}]}L^{1}_{x}}\notag\\
	&\lesssim \ell_q^{-5}( 12^q\cdot 8\cdot 4rL^2)^{4\cdot 12^q }\lambda_{q+1}^{-\frac87+\ve} + \| \mathring R_{q}\|_{L^r_{\Omega}C_{[T_{q+2},T_{q+1}]}L^{1}_{x}}\leq \frac14 c_*\delta_{q+ 3}.
\end{align}

Now,  combining  \eqref{rq1-1}-\eqref{R-cut-esti-1} together,  we
conclude that
\begin{align} \label{rq1b}
\|\mathring{R}_{q+1} \|_{L^r_{\Omega}C_{I_1}L^1_{x}}
	&\leq \| \mathring{R}_{lin}  \|_{L^r_{\Omega}C_{I_1}L^p_x} +  \| \mathring{R}_{osc}  \|_{L^r_{\Omega}C_{I_1}L^p_x}
	+  \|\mathring{R}_{cor}  \|_{L^r_{\Omega}C_{I_1}L^{ 1}_x}  +  \|\mathring{R}_{com}   \|_{L^r_{\Omega}C_{I_1}L^1_x} +  \|\mathring{R}_{cut} (t) \|_{L^r_{\Omega}C_{I_1}L^1_x}\nonumber  \\
	&\leq \frac12c_*\delta_{q+3}.
\end{align}

\subsubsection{The temporal regime 
near the initial time.}

Concerning the case where $t\in I_2:=[0,\frac{T_{q+2}+T_{q+1}}{2}]$,
we have $\mathring{R}_{lin}=\mathring{R}_{osc}  =\mathring{R}_{cor}=0$ and $\mathring{R}_{cut}=\mathring{R}_{q}$.
Thus, using \eqref{asp-0}, \eqref{asp-q}, \eqref{R-com-esti} and choosing $a$ sufficiently large,
we get
\begin{align}\label{rq1u-3}
	\|\mathring{R}_{q+1} \|_{L^r_{\Omega}C_{I_2} L^1_{x}}&\leq \|\mathring{R}_{com}   \|_{L^r_{\Omega}C_{I_2} L^1_x} + \|\mathring{R}_{cut} \|_{L^r_{\Omega} C_{I_2}L^1_x} \leq \frac12 c_*\delta_{q+ 3},
\end{align}
thereby verifying \eqref{rl1s}.

Following a similar manner as in the proof of \eqref{rq1-1}-\eqref{R-cut-esti-1}, for $m\geq 1$ we have
\begin{align} \label{rq1b-m}
	\|\mathring{R}_{q+1}   \|_{L^{m}_{\Omega}C_TL^1_{x}}\lesssim (12^{q+1}\cdot 8mL^2)^{12^{q+1}}.
\end{align}

\subsection{Verification of growth estimate}  \label{sucsec-est-r-b}

Below we verify the $L^m_\Omega C^{\frac12-\delta}_{[T_{q+3},T]}L^1_x$
and $L^m_\Omega C_{[T_{q+3},T]}W_x^{1-\delta,1}$ growth estimates in \eqref{rl1b-s} for the Reynolds stress.
Let us consider two cases
where $t\in I_1':=[T_{q+3},T_{q+2}]$
and $t\in I_2':=[T_{q+2},T]$, respectively,
in the following.

\subsubsection{The temporal regime $I_1':=[T_{q+3},T_{q+2}]$}

For the case where $t\in I_1':=[T_{q+3},T_{q+2}]$, we have $u_{q+1}(t)= u_{q}(t)$, $\mathring{R}_{lin}=\mathring{R}_{osc}  =\mathring{R}_{cor}=0$ and $\mathring{R}_{cut}=\mathring{R}_{q}=\mathring{R}_{0}$. Hence,
\begin{align}
    \rr_{q+1}= u_{q}\mathring\otimes (z_{q+1}-\zq)+(z_{q+1}-\zq) \otimes u_{q} +z_{q+1}\mathring\otimes z_{q+1}-\zq\mathring\otimes \zq+ \mathring{R}_{0}.
\end{align}
By \eqref{uc}, interpolation and Proposition~\ref{Prop-noise}, we get
\begin{align*}
	&\quad\| u_{q}\mathring\otimes (z_{q+1}-\zq)+(z_{q+1}-\zq) \mathring\otimes u_{q} +z_{q+1}\mathring\otimes z_{q+1}-\zq\mathring\otimes \zq \|_{L^{m}_{\Omega}C^{\frac12-\delta}_{I_1'}L_x^1} \notag\\
	&\lesssim   \|u_{q} \|_{L^{2m}_{\Omega}C^{\frac12-\delta}_{I_1'}L_x^2} \|z \|_{L^{2m}_{\Omega}C^{\frac12-\delta}_TL_x^2} + \|z \|_{L^{2m}_{\Omega}C^{\frac12-\delta}_TL_x^2}^2\notag\\
	&\lesssim \lambda_{q}^{4}(12^{q-1}\cdot 8\cdot 12mL^2)^{6\cdot 12^{q-1}}(2m-1)^{\frac12}L +(2m-1)L^2,
\end{align*}
which along with \eqref{est-r0-c-end} yields
\begin{align}\label{est-rq1-c-p1}
    \norm{\mathring{R}_{q+1} }_{L^m_{\Omega}C^{\frac12-\delta}_{I_1'}L_x^1}\lesssim \lambda_{q+1}^5 (12^{q+1}\cdot 8mL^2)^{12^{q+1}}.
\end{align}
Similarly, we have
\begin{align*}
	&\quad\| u_{q}\mathring\otimes (z_{q+1}-\zq)+(z_{q+1}-\zq) \mathring\otimes u_{q} +z_{q+1}\mathring\otimes z_{q+1}-\zq\mathring\otimes \zq \|_{L^m_{\Omega}C_{I_1'}W_x^{1-\delta,1}} \notag\\
	&\lesssim   \|u_{q} \|_{L^{2m}_{\Omega}C_{I_1'}H_x^{1-\delta}} \|z \|_{L^{2m}_{\Omega}C_TH_x^{1-\delta}} + \|z \|_{L^{2m}_{\Omega}C_TH_x^{1-\delta}}^2\notag\\
	&\lesssim \lambda_{q}^{4}(12^{q-1}\cdot 8\cdot 12mL^2)^{6\cdot 12^{q-1}}(2m-1)^{\frac12}L +(2m-1)L^2,
\end{align*}
which along with \eqref{est-r0-w-1} yields
\begin{align}\label{est-rq1-w-p1}
    \norm{\mathring{R}_{q+1} }_{L^m_{\Omega}C_{I_1'}W_x^{1-\delta,1}}\lesssim \lambda_{q+1}^5 (12^{q+1}\cdot 8mL^2)^{12^{q+1}}.
\end{align}

\subsubsection{The temporal regime
$t\in I_2':=[T_{q+2},T]$}
Now let us treat the more delicate case where $t\in [T_{q+2},T]$.
\paragraph{\bf Linear error.}
Concerning the linear error $\mathring{R}_{lin}$, following a similar manner as in \eqref{Rlin-t}, we have
\begin{align}\label{Rlin-t-c}
	\| \cq \mathcal{R}\partial_t( w_{q+1}^{(p) }+w_{q+1}^{(c)})\|_{C^{\frac12-\delta}_{I_2'}L_x^1}
	\lesssim&\  \sum_{k \in \Lambda }\| \cq\mathcal{R} \curl\partial_t( a_{(k)} W^c_{(k)}) \|_{C^{\frac12-\delta}_{I_2'}L_x^p}  \nonumber \\
	\lesssim&\ \sum_{k \in \Lambda }\|\cq\partial_t( a_{(k)}W^c_{(k)}) \|_{C_{I_2'}L_x^p}^{\frac12+\delta}\| \cq\partial_t( a_{(k)} W^c_{(k)}) \|_{C^1_{I_2'}L_x^p}^{\frac12-\delta} \notag\\
    \lesssim&\  (\ell_q^{-2}(1+\j)^3\lambda_{q+1}^{-\frac27+\ve})^{\frac12+\delta}(\ell_q^{-3}(1+\j)^4\lambda_{q+1}^{3})^{\frac12-\delta} \notag\\
	\lesssim&\   \ell_q^{-3}(1+\j)^4\lambda_{q+1}^{\frac32},
\end{align}
and
\begin{align}\label{Rlin-t-h}
	\| \cq \mathcal{R}\partial_t( w_{q+1}^{(p) }+w_{q+1}^{(c)})\|_{C_{I_2'}W_x^{1-\delta,1}} \lesssim 	\|  \mathcal{R}\partial_t( w_{q+1}^{(p) }+w_{q+1}^{(c)})\|_{C_{I_2'}W_x^{1 ,p}}
	\lesssim&\   \ell_q^{-2}(1+\j)^5\lambda_{q+1},
\end{align}
where the integrability exponent $p$ is given by \eqref{def-p}.

Regarding the viscosity term, an application of interpolation and Lemma \ref{buildingblockestlemma} gives
\begin{align}\label{viscosity-c}
	\norm{\nu\mathcal{R}\Delta w_{q+1}}_{C^{\frac12-\delta}_{I_2'}L_x^1}
& \lesssim	\norm{\nu\mathcal{R}\Delta w_{q+1}}_{C_{I_2'}L_x^p}^{\frac12+\delta} 	
\norm{\nu\mathcal{R}\Delta w_{q+1}}_{C^1_{I_2'}L_x^p}^{\frac12-\delta} \notag\\
 &\lesssim (\ell_q^{-2}(1+\j)^{5}\lambda_{q+1}^{-\frac17+\ve})^{\frac12+\delta}  (\ell_q^{-2}(1+\j)^{6}\lambda_{q+1}^{3})^{\frac12-\delta} \notag\\
 &\lesssim \ell_q^{-2}(1+\j)^{6}\lambda_{q+1}^{\frac32},
\end{align}
and
\begin{align}\label{viscosity-h}
	\norm{\nu\mathcal{R} \Delta w_{q+1}}_{C_{I_2'}W_x^{1-\delta,1}}\leq \norm{\nu\mathcal{R}\Delta w_{q+1}}_{C_{I_2'}W_x^{1,p}}  &\lesssim \ell_q^{-2}(1+\j)^{7}\lambda_{q+1}.
\end{align}

For the remaining nonlinear terms in \eqref{rbp}, we have
\begin{align} \label{linear estimate1-c}
	&\norm{  (u_{q}+\zq) \mathring\otimes  w_{q+1} +  w_{q + 1} \mathring\otimes (u_{q}+\zq)}_{C^{\frac12-\delta}_{I_2'}L_x^1}   \nonumber \\	
	\lesssim\,& \norm{u_{q}+\zq}_{C^{\frac12-\delta}_{I_2'}L_x^2}   \norm{  w_{q+1}}_{C^{1}_{T,x}}   \nonumber \\
	\lesssim\, & \lambda_{q+1}^4 (1+\j)^{6} (  \|u_q\|_{C^{\frac12-\delta}_{I_2'}L_x^2}  + \|z_{q}\|_{C^{\frac12-\delta}_TL_x^2} ),
\end{align}
and
\begin{align} \label{linear estimate1-h}
	&\norm{  (u_{q}+\zq) \otimes  w_{q+1} + w_{q + 1} \otimes (u_{q}+\zq)}_{C_{I_2'}W_x^{1-\delta,1}}   \nonumber \\	
	\lesssim\,& \norm{u_{q}+\zq}_{C_{I_2'}H_x^{1-\delta}}   \norm{ w_{q+1}}_{C^{1}_{T,x}}   \nonumber \\
	\lesssim\, & \lambda_{q+1}^4 (1+\j)^{6} (  \|u_q\|_{C_{I_2'}H_x^{1-\delta}}   + \|z_{q}\|_{C_TH_x^{1-\delta}}  ).
\end{align}

Thus, combining \eqref{Rlin-t-c}-\eqref{linear estimate1-h} together
 we arrive at
\begin{align}   \label{linear estimate-c}
	\norm{\mathring{R}_{lin} }_{C^{\frac12-\delta}_{I_2'}L_x^1}
     & \lesssim \ell_q^{-3}(1+\j)^{6}\lambda_{q+1}^{\frac32}+\lambda_{q+1}^4 (1+\j)^{6} (\|u_q\|_{C^{\frac12-\delta}_{I_2'}L_x^2} + \|z_{q}\|_{C^{\frac12-\delta}_TL_x^2} ),
\end{align}
and
\begin{align}   \label{linear estimate-h}
	\norm{\mathring{R}_{lin} }_{C_{I_2'}W_x^{1-\delta,1}}
	& \lesssim \ell_q^{-2}(1+\j)^{7}\lambda_{q+1}+\lambda_{q+1}^4 (1+\j)^{6} ( \|u_q\|_{C_{I_2'}H_x^{1-\delta}}   + \|z_{q}\|_{C_TH_x^{1-\delta}}).
\end{align}

\paragraph{\bf Oscillation error.}
In order to treat the oscillation error, applying Lemmas~\ref{buildingblockestlemma}, \ref{mae} and interpolation we get
\begin{align}  \label{ro-esti-c}
	\norm{\mathring{R}_{osc}  }_{C^{\frac12-\delta}_{I_2'}L_x^1}
	& \lesssim 	\|\cq^2\|_{C_T^1}	\norm{\mathring{R}_{q}  }_{C^{\frac12-\delta}_{I_2'}L_x^1}+ \sum_{ k \in \Lambda } \|\cq^2\|_{C_T^1} \norm{ \na (a^2_{(k)})}_{C^1_{T,x}}\norm{W_{(k)} }_{C^{\frac12-\delta}_TL^{2} }^2\notag\\
	&\quad  +  {\mu}^{-1} \|\cq^2\|_{C_T^1} \sum_{k\in\Lambda }
	\norm{\p_t (a_{(k)}^2) }_{C^1_{T,x}}
	\norm{\psi_{(k_1)}}_{C^{\frac12-\delta}_TL^{2} }^2\norm{\phi_{(k)}}_{L^{2} }^2 \nonumber  \\
	& \lesssim  	\ell_q^{-1} \norm{\mathring{R}_{q}  }_{C^{\frac12-\delta}_{I_2'}L_x^1}+ \ell_q^{-27}(1+\j)^8\lambda_{q+1}^{3},
\end{align}
and
\begin{align}  \label{ro-esti-h}
	\norm{\mathring{R}_{osc}  }_{C_{I_2'}W_x^{1-\delta,1}}
	& \lesssim 	\|\cq^2\|_{C_T}\norm{\mathring{R}_{q}  }_{C_{I_2'}W_x^{1-\delta,1}} +\|\cq^2\|_{C_T} \sum_{ k \in \Lambda } \norm{ \na (a^2_{(k)})}_{C^1_{T,x}}\norm{W_{(k)} }_{C_TH_x^{1}} ^2\notag\\
	&\quad  +  {\mu}^{-1}  \|\cq^2\|_{C_T}\sum_{k\in\Lambda }
	\norm{\p_t (a_{(k)}^2) }_{C^1_{T,x}}
	\norm{\psi_{(k_1)}^2\phi_{(k)}^2}_{C_TW_x^{1,1}} \nonumber  \\
	& \lesssim  	\norm{\mathring{R}_{q}  }_{C_{I_2'}W_x^{1-\delta,1}}+ \ell_q^{-26}(1+\j)^8\lambda_{q+1}^{3}.
\end{align}

\paragraph{\bf Corrector error.}
Using the interpolation inequality and
applying Lemma \ref{totalest} to \eqref{rbp2} we get
\begin{align}\label{corrector estimate-c}
	\norm{\mathring{R}_{cor}   }_{C^{\frac12-\delta}_{I_2'}L_x^1}
	\lesssim \norm{\wt  w_{q+1}^{(c)+(t)}}_{C^{\frac12-\delta}_TL_x^2}  (\norm{\wt  w^{(p)}_{q+1}  }_{C^{\frac12-\delta}_TL_x^2}  + \norm{  w_{q+1} }_{C^{\frac12-\delta}_TL_x^2} )
	 \lesssim \lambda_{q+1}^{4} (1+\j)^{6} ,
\end{align}
and
\begin{align}\label{corrector estimate-h}
	\norm{\mathring{R}_{cor}   }_{C_{I_2'}W_x^{1-\delta,1}}
	\lesssim \norm{ \wt  w_{q+1}^{(c)+(t)}}_{C_TH_x^{1}}  (\norm{\wt  w^{(p)}_{q+1}  }_{C_TH_x^{1}}  + \norm{  w_{q+1} }_{C_TH_x^{1}} )
	\lesssim& \lambda_{q+1}^{4}(1+\j)^{6}.
\end{align}
\paragraph{\bf Commutator error.}
By interpolation and Lemma~\ref{totalest},
\begin{align}\label{R-com-c}
	\|\mathring{R} _{com}\|_{C^{\frac12-\delta}_{I_2'}L_x^1} 	&\leq\| u_{q+1}\mathring\otimes (z_{q+1}-\zq)+(z_{q+1}-\zq) \mathring\otimes u_{q+1} +z_{q+1}\mathring\otimes z_{q+1}-\zq\mathring\otimes \zq \|_{C^{\frac12-\delta}_{I_2'}L_x^1} \notag\\
	&\lesssim   \|u_{q+1} \|_{C^{\frac12-\delta}_{I_2'}L_x^2} \|z \|_{C^{\frac12-\delta}_TL_x^2} + \|z \|_{C^{\frac12-\delta}_TL_x^2}^2\notag\\
	&\lesssim  (\lambda_{q+1}^{2}(1+\j)^5+\|u_{q} \|_{C^{\frac12-\delta}_{I_2'}L_x^2} )\|z \|_{C^{\frac12-\delta}_TL_x^2} + \|z \|_{C^{\frac12-\delta}_TL_x^2}^2,
\end{align}
and
\begin{align}\label{R-com-h}
	\|\mathring{R} _{com}\|_{C_{I_2'}W_x^{1-\delta,1}} &\lesssim  \|u_{q+1} \|_{C_{I_2'}H_x^{1}} \|z \|_{C_TH_x^{1-\delta}} + \|z \|_{C_TH_x^{1-\delta}}^2\notag\\
 &\lesssim (\lambda_{q+1}^{2}(1+\j)^5+\|u_{q} \|_{C_{I_2'}H_x^{1}} ) \|z \|_{C_TH_x^{1-\delta}} + \|z \|_{C_TH_x^{1-\delta}}^2.
\end{align}

\paragraph{\bf Cut-off error.}
In order to treat the cut-off error, using Lemma~\ref{totalest}, \eqref{est-r0-c-end}, \eqref{est-r0-w-1} and the interpolation inequality we get
\begin{align}\label{cut-est-c}
    \|\mathring{R} _{cut}\|_{C^{\frac12-\delta}_{I_2'}L_x^1} 	&\leq \| \mathcal{R} \(  \partial_t\chi_{q+1}(w^{(p)}_{q+ 1}+w^{(c)}_{q+ 1})+\partial_t(\chi_{q+1}^2)(w^{(t)}_{q+ 1})\)+ (1-\chi_{q+1}^2 )\mathring{R}_{q}\|_{C^{\frac12-\delta}_{I_2'}L_x^1}\notag\\
    &\lesssim \|\partial_t\chi_{q+1}\|_{C^{\frac12-\delta}_{I_2'}} \|w^{(p)}_{q+ 1}+w^{(c)}_{q+ 1}\|_{C^{\frac12-\delta}_{I_2'}L_x^p} + \|\partial_t(\chi_{q+1}^2)\|_{C^{\frac12-\delta}_{I_2'}} \|w^{(t)}_{q+ 1} \|_{C^{\frac12-\delta}_{I_2'}L_x^p} \notag\\
    &\quad +\|(1-\chi_{q+1}^2 )\|_{C^{\frac12-\delta}_{I_2'}} \|\mathring{R}_{q}\|_{C^{\frac12-\delta}_{I_2'}L_x^1}\notag\\
    &\lesssim \lambda_{q+1}^{4}(1+\j)^{6}+ \ell_q^{-1} \|\mathring{R}_{q}\|_{C^{\frac12-\delta}_{I_2'}L_x^1}
\end{align}
and 
\begin{align}\label{cut-est-h}
    \|\mathring{R} _{cut}\|_{C_{I_2'}W_x^{1-\delta,1}} &\lesssim   \|\partial_t\chi_{q+1}\|_{C_{I_2'}} \|w^{(p)}_{q+ 1}+w^{(c)}_{q+ 1}\|_{C_{I_2'}W_x^{1-\delta,p}} + \|\partial_t(\chi_{q+1}^2)\|_{C_{I_2'}} \|w^{(t)}_{q+ 1} \|_{C_{I_2'}W_x^{1-\delta,p}} \notag\\
    &\quad +\|(1-\chi_{q+1}^2 )\|_{C_{I_2'}} \|\mathring{R}_{q}\|_{C_{I_2'}W_x^{1-\delta,1}}\notag\\
     &\lesssim \lambda_{q+1}^{4}(1+\j)^{6}+  \|\mathring{R}_{q}\|_{C_{I_2'}W_x^{1-\delta,1}}.
\end{align}

Therefore,  combining  \eqref{linear estimate-c},
\eqref{ro-esti-c},
\eqref{corrector estimate-c}, \eqref{R-com-c}
and \eqref{cut-est-c} altogether, for $m\geq 1$ we obtain
\begin{align} \label{rq1b-c}
	\|\mathring{R}_{q+1}   \|_{L^{m}_{\Omega}C^{\frac12-\delta}_{I_2'}L_x^1}
	&\leq \| \mathring{R}_{lin}   \|_{L^{m}_{\Omega}C^{\frac12-\delta}_{I_2'}L_x^1} +  \| \mathring{R}_{osc}  \|_{ L^{m}_{\Omega}C^{\frac12-\delta}_{I_2'}L_x^1}
	+  \|\mathring{R}_{cor}   \|_{L^{m}_{\Omega}C^{\frac12-\delta}_{I_2'}L_x^1}  +  \|\mathring{R}_{com}\|_{L^{m}_{\Omega}C^{\frac12-\delta}_{I_2'}L_x^1} \nonumber  \\
 &\quad + \|\mathring{R}_{cut}   \|_{L^{m}_{\Omega}C^{\frac12-\delta}_{I_2'}L_x^1} \notag\\
	&\lesssim  \lambda_{q+1}^{4}( 12^q\cdot 8\cdot 12mL^2)^{6\cdot 12^q } (\lambda_{q}^4(12^{q-1}\cdot8\cdot12mL^2)^{6\cdot12^{q-1}} +(2m-1)^\frac12 L )\notag\\
	&\quad+\ell_q^{-3}\lambda_{q+1}^{\frac32}( 12^q\cdot 8\cdot6mL^2)^{6\cdot12^q }+\ell_q^{-1}\lambda_{q}^5 (12^q\cdot 8mL^2)^{12^q}\notag\\
    &\quad+ \ell_q^{-27}\lambda_{q+1}^{3}( 12^q\cdot 8\cdot 8mL^2)^{8\cdot12^q } + \lambda_{q+1}^{4}( 12^q\cdot 8\cdot6mL^2)^{6\cdot12^q }\notag\\
	 &\quad + (\lambda_{q+1}^{2}( 12^q\cdot 8\cdot10mL^2)^{5\cdot12^q }+ \lambda_{q}^4(12^{q-1}\cdot8\cdot12mL^2)^{6\cdot12^{q-1}} )(2m-1)^{\frac12} L  +( 2m-1)  L^2\notag\\
  &\quad + \lambda_{q+1}^{4}( 12^q\cdot 8\cdot6mL^2)^{6\cdot12^q } +\ell_q^{-1} \lambda_{q}^5 (12^q\cdot 8mL^2)^{12^q}\notag\\
	&\lesssim \lambda_{q+1} ^5 (12^{q+1}\cdot 8mL^2)^{12^{q+1}},
\end{align}
and, similarly,
\begin{align} \label{rq1b-h}
	\|\mathring{R}_{q+1}   \|_{L^m_{\Omega}C_{I_2'}W_x^{1-\delta,1}}
	\lesssim \lambda_{q+1} ^5 (12^{q+1}\cdot 8mL^2)^{12^{q+1}}.
\end{align}

Therefore, estimates \eqref{est-rq1-c-p1}, \eqref{est-rq1-w-p1}, \eqref{rq1b-c} and \eqref{rq1b-h} together verify the inductive estimates in \eqref{rl1b-s} at level $q+1$.

\section{Proof of main results} \label{Sec-prf-thm}
This section is devoted to the proof of the main iteration estimates in Proposition~\ref{Prop-Iterat} and the main results
concerning the global-in-time continuous energy solutions
with prescribed initial data in Theorem \ref{Thm-Non-SNS} and Corollary \ref{thm-Nonuniq-Stoch}.

\subsection{Main iteration}

Let us first prove the main iteration estimates in Proposition~\ref{Prop-Iterat} below. 

\medskip
{\bf Proof of Proposition~\ref{Prop-Iterat}.}  
Based on estimates in the previous two sections, 
we are left to verify the inductive estimates \eqref{ul2}-\eqref{energy-est} 
for $u_0$ and $\rr_0$ at the initial step $q=0$. 

Note that, 
by our choice, 
$u_0$ is exactly the solution $\wt u$ to the 
$\Lambda$-NSE \eqref{equa-nse-2} with $q=0$. 
Then, 
by the energy balance \eqref{eq-e-2}, 
$$\|u_0\|_{L^\infty_\Omega C_T L^2_x} \leq 
\|v_0\|_{L^\infty_\Omega L^2_x} \leq M,$$ 
which satisfies \eqref{ul2} 
by taking $\delta_0\geq M$. 
Estimate \eqref{uc} has been verified by \eqref{est-u0-c1}. Moreover, 
by the choice of $\delta_2$ in \eqref{R0-CtL1}, 
\eqref{rl1s} is satisfied. 

We also derive from 
\eqref{r0b} and Proposition~\ref{Prop-noise} that 
\begin{align}\label{est-r0-m}
\| \mathring{R}_{0} \|_{L^{m}_{\Omega}C_TL^1_x}
	&\lesssim \|\wt u \|_{L^{2m}_{\Omega}C_TL^{2}_x}^2+\|\wt u \|_{L^{2m}_{\Omega}C_TL^{2}_x} \| z \|_{L^{2m}_{\Omega}C_TL^2_x}+  \| z \|_{L^{2m}_{\Omega} C_TL^2_x}^2 \lesssim 8mL^2,
\end{align} 
which verifies \eqref{rl1b}. 

Concerning \eqref{rl1b-s}, 
for $\delta\in(0,1/6)$, 
we estimate 
\begin{align}\label{est-r0-c-1}
\| \mathring{R}_{0} \|_{L^{m}_{\Omega}C^{\frac12-\delta}_{[T_{q+2},T]}L^1_x} &\lesssim  \|\P_{\geq \Lambda}(\P_{< \Lambda}\wt u\mathring \otimes \P_{< \Lambda}\wt u) \|_{L^{m}_{\Omega}C^{\frac12-\delta}_{[T_{q+2},T]}L^1_x} +\|(\P_{< \Lambda}\wt u\mathring \otimes \P_{\geq \Lambda}\wt u ) \|_{L^{m}_{\Omega}C^{\frac12-\delta}_{[T_{q+2},T]}L^1_x} \notag\\
	&\quad + \|(\P_{\geq\Lambda}\wt u\mathring \otimes \wt u) \|_{L^{m}_{\Omega}C^{\frac12-\delta}_{[T_{q+2},T]}L^1_x} +  \|\wt u \|_{L^{2m}_{\Omega}C^{\frac12-\delta}_{[T_{q+2},T]}L^2_x} \| z_0 \|_{L^{2m}_{\Omega} C^{\frac12-\delta}_{[T_{q+2},T]}L^2_x}\notag\\
 &\quad+ \| z_0\|_{L^{2m}_{\Omega}C^{\frac12-\delta}_{[T_{q+2},T]}L^{2}_x}^{2}  \notag \\
	&\lesssim \|\wt u \|_{L^{2m}_{\Omega}C^{\frac12-\delta}_{[T_{q+2},T]}L^{2}_x}^2+\|\wt u \|_{L^{2m}_{\Omega}C^{\frac12-\delta}_{[T_{q+2},T]}L^{2}_x} \| z \|_{L^{2m}_{\Omega} C^{\frac12-\delta}_{[T_{q+2},T]}L^2_x}+  \| z \|_{L^{2m}_{\Omega} C^{\frac12-\delta}_{[T_{q+2},T]}L^2_x}^2.
\end{align}
By \eqref{spa-regu}, \eqref{tem-regu-1} and interpolation,  
\begin{align}\label{est-wtu-c}
    \|\wt u \|_{L^{2m}_{\Omega}C^{\frac12-\delta}_{[T_{q+2},T]}L^{2}_x}&\lesssim \|\wt u \|_{L^{2m}_{\Omega}C_{[T_{q+2},T]}L^{2}_x}^{\frac12+\delta}\|\wt u \|_{L^{2m}_{\Omega}C^{1}_{[T_{q+2},T]}L^{2}_x}^{\frac12-\delta}\lesssim  (1+T_{q+2}^{-1}) (\|v_0 \|_{L^{2m}_{\Omega}L^2_x }+\|v_0\|_{L^{4m}_{\Omega}L^2_x}^2).
\end{align}
Plugging \eqref{est-wtu-c} into \eqref{est-r0-c-1} and using Proposition~\ref{Prop-noise}, 
we lead to 
\begin{align}\label{est-r0-c-end}
    \| \mathring{R}_{0} \|_{L^{m}_{\Omega}C^{\frac12-\delta}_{[T_{q+2},T]}L^1_x} &\lesssim (1+T_{q+2}^{-2}) (\|v_0\|_{L^{2m}_{\Omega}L^2_x}+\|v_0\|_{L^{4m}_{\Omega}L^2_x}^2)^2 +(2m-1) L^2\notag\\
    &\quad + (1+T_{q+2}^{-1}) (2m-1)^{\frac12}L(\|v_0\|_{L^{2m}_{\Omega}L^2_x}+\|v_0\|_{L^{4m}_{\Omega}L^2_x}^2)\notag\\
    &\lesssim \lambda_{q}^4 (2m-1)L.
\end{align}
Similar arguments also yield
\begin{align}\label{est-r0-w-1}
\| \mathring{R}_{0} \|_{L^{m}_{\Omega}C_{[T_{q+2},T]}W^{1-\delta,1}_x} &\lesssim  \lambda_{q}^4 (2m-1)L.
\end{align}
Thus, estimate \eqref{rl1b-s} is justified 
for $q=0$. 

At last, 
by the choice of energy profile $e$ in \eqref{asp-0-energy} and \eqref{asp-e-q},
\eqref{energy-est} is satisfied at level $q=0$.

Now, for $q\geq 1$, \eqref{ul2}, \eqref{uc}, \eqref{energy-est} and \eqref{u-B-L2tx-conv} have been verified in Section~\ref{Sec-Interm-Flow},  and \eqref{rl1b} and \eqref{rl1b-s} have been verified in Section~\ref{Sec-Rey-stress}. 
Therefore, the proof of Proposition~\ref{Prop-Iterat} is complete. 
\hfill $\square$

\subsection{Global continuous energy solutions}
We prove the construction of continuous energy solutions
with prescribed initial data in Theorem~\ref{Thm-Non-SNS} 
below. 

\medskip
{\bf Proof of Theorem~\ref{Thm-Non-SNS}.} 
As explained in Section \ref{Sec-Strategy},  
for any initial datum $v_0\in L^2_\sigma$, we choose the adapted solution $\wt u$ to the $\Lambda$-NSE
as the initial solution
to the relaxed Navier-Stokes-Reynolds system \eqref{equa-nsr}
with the Reynolds stress given by \eqref{r0b}.
Note that the estimates \eqref{ul2}-\eqref{energy-est} hold at step $q=0$. 
Then, applying Proposition~\ref{Prop-Iterat} we obtain
a sequence of relaxed solutions $(u_{q},\rr_{q})$ to \eqref{equa-nsr}
which satisfy estimates \eqref{ul2}-\eqref{energy-est} for all $q\geq 0$.

Concerning the $\{\mathcal{F}_t\}$-adaptedness of relaxed solutions $(u_{q},\rr_{q})$, we start by considering the relaxed solution $(u_{0},\rr_{0})$ at the initial step.
Since $\wt u$ is adapted, by \eqref{r0b},
$(u_{0},\rr_{0})$ is adapted.
Moreover, as $\varphi_{\ell_0}$ is a one-sided temporal mollifier,
the mollified Reynolds stress
$\mathring{R}_{\ell_0}$ is adapted,
and so is $a_{(k)}$
due to \eqref{rhob}-\eqref{def-fq} and \eqref{akb}.
Then, in view of \eqref{velocity perturbation},
we infer that the velocity perturbation $w_1$ is adapted.
Thus, by \eqref{q+1 iterate} and \eqref{rucom},
$(u_{1},\rr_{1})$ is also adapted.
Arguing in an iterative way we deduce that $(u_{q},\rr_{q})$
is adapted for all $q\geq 0$.

\medskip 
Next we claim that $\{u_{q}\}$ is a Cauchy sequence in $L^{2 r} (\Omega ; C ([0, T] ; L_x^2 ) )$ and the limit solves \eqref{equ-sns} in the sense of Definition \ref{weaksolu}. 

For this purpose, using \eqref{u-B-L2tx-conv} we derive that
\begin{align}\label{interpo}
 \sum_{q \geq 0} \| u_{q+1} - u_q \|_{L^{2r}_{\Omega}C_T L^2_{ x}}
	\lesssim  &\,  \sum_{q \geq 0}   \delta_{q+1}^{\frac{1}{2}} <\infty,
\end{align}
which yields that $\{u_q\}_{q\geq 0}$ is a Cauchy sequence, and so, there exists $u \in L^{2 r} (\Omega ; C ([0, T] ; L_x^2 ) )$ such that
\begin{align}   \label{uqBq-uB-0}
	\lim_{q\rightarrow\infty} u_q = u \ \ \text{in}\ L^{2 r} (\Omega ; C ([0, T] ; L_x^2 ) ).
\end{align}
Since $\{u_q\}_{q\geq 0}$ is adapted,
so is the limit $u$. Moreover,
by the backward construction, $w_{q}(0)=0$ and $\u(0)=v_0$ for all $q\geq 0$, we then have $u(0)=\wt u(0)=v_0$.

In order to verify that $ u $ satisfies system \eqref{equ-sns}, we derive from \eqref{rl1s} that
 $\lim_{q \to \infty}  \mathring{R}_{q}  = 0 $
 in $L^r_{\Omega} C_TL^1_x$.
Moreover, by \eqref{def-ziq},
\begin{align}
\|z_{ q}-z \|_{L^2_x}=  \|\P_{>\laq^{\frac{\ve}{8}}}z  \|_{L^2_x}\leq \left\|\hat z (\xi)\right\|_{L^2(|\xi|>\laq^{\frac{\ve}{8}})}\rightarrow 0,\ \ \text{as}\ \  q\rightarrow \9,
\end{align}
hence
\begin{align}\label{con-zq}
 z_{ q} \to z \ \  \text{in} \ \  L^p_\Omega C_TL^2_x\ \  \text{for any}\ \ 1\leq p<\infty.
\end{align}
Taking into account \eqref{uqBq-uB-0}, we deduce that $(u_q, \rr_q, z_q)$ convergences in probability to $(u,0,z)$ in $
C_{[0,T]}L^2_x \times C_TL^1_x\times C_TL^2_x$.  Selecting a subsequence $\{(u_{q_k}, \rr_{q_k}, z_{q_k})\}$ if necessary we have 
\begin{align}
    (u_{q_k}, \rr_{q_k}, z_{q_k})\rightarrow (u,0,z)\quad \text{in}\quad C_{T}L^2_x\times C_TL^1_x\times C_TL^2_x,\quad \P-a.s.
\end{align}
This yields that 
\begin{align*}
&\quad\ \|(u_{q_k}+z_{q_k})\otimes (u_{q_k}+z_{q_k})- (u +z)\otimes (u+z) \|_{C_TL^1_x}\notag\\
&\leq   (  \|u_{q_k}+z_{q_k}\|_{C_TL^2_x}+ \|u+z\|_{C_TL^2_x} ) (\|u_{q_k}-u \|_{ C_TL^2_x}+\|z_{q_k}-z\|_{C_TL^2_x} )\rightarrow 0\ \  \text{as}\ q\rightarrow\infty. 
\end{align*} 
Thus,  passing to the limit $q_k\rightarrow \infty$, we infer that $u$ solves \eqref{equ-sns} in the distributional sense.

In particular, via \eqref{u-vz-shift},
$v = u + z$ solves \eqref{equa-NS} on $[0,T]$ with the initial datum $v_0$ in the sense of Definition~\ref{weaksolu}, 
as claimed. 

At last,
by \eqref{energy-est}, \eqref{uqBq-uB-0} and \eqref{con-zq}, 
we have for any $t\in [0,T]$, 
\begin{align}\label{ener-tb0}
	e(t)- \E \|v(t)\|_{L^2_x}^2   =\lim_{q \to +\infty}  e(t)-\E\|(u_q+z_{q})(t)\|_{L^2_x}^2 = 0.
\end{align}
which gives \eqref{energy}.
Therefore,
the proof of Theorem \ref{Thm-Non-SNS} is complete. 
\hfill $\square$

\medskip  
 Corollary~\ref{thm-Nonuniq-Stoch} now follows 
 essentially from Theorem \ref{Thm-Non-SNS}.

{\bf Proof of Corollary~\ref{thm-Nonuniq-Stoch}.} 
Given any initial datum $v_0\in L^2_\sigma$,   
$\mathbb{P}$-a.s., let 
$$\Omega_M:=\{ M\leq  \|v_0\|_{L^2_x}<M+1\}\in \mathcal{F}_0,
\ \ M\in \mathbb{N}. $$  
Then $\Omega=\bigcup_{M=0}^\infty \Omega_M$. 
By virtue of Theorem \ref{Thm-Non-SNS}, 
there exists an adapted solution $v^{M}$ to \eqref{equa-NS} 
corresponding to the initial datum 
$v_0 1_{\Omega_{M}}$. Let 
$$v:= \displaystyle\sum_{M=0}^\infty  v^{M}1_{\Omega_{M} }.$$
Then, $v$ is an adapted solution to \eqref{equa-NS} 
and  $v\in C([0,T];L^2_x)$, $\P$-a.s. 
Since $v(T)\in  L^2_\sigma$, $\P$-a.s., 
one can regard $v(T)$ as the new initial datum to extend the solution $v$ to the interval $[0, 2T]$. Iterating this process, 
one then obtain a global solution to \eqref{equa-NS} on $[0,\infty)$ with the initial datum $v_0$.

Next, we show the non-uniqueness of global weak solutions. 
For this purpose, 
on every $\Omega_{M}$, 
in view of Theorem \ref{Thm-Non-SNS}, 
there exist infinitely many distinct energy profiles 
$\{e_n^{M}\}_{n=1}^\infty$ 
and the corresponding solutions $\{v_n^M\}_{n=1}^\infty$ 
to \eqref{equa-NS} on $[0,T]$, 
such that $e_n^{M}\neq e_m^{M}$ for any $n\neq m$, 
$v_n^M(0) = v_0 1_{\Omega_M}$ 
and $e_n^{M}(t)=\E\|v_n^M(t)\|_{L^2_x}^2$ for all $n\geq 0$.
Then,  for every $n\geq 1$, 
$v_n= \sum_{M=0}^\infty v_n^M 1_{\Omega_M}$ 
can be extended to a global solution to \eqref{equa-NS} 
on $[0,\infty)$.  
Note that 
$\{v_n\}$ have the same initial datum $v_0$, 
but distinct energy profiles at least on $[0,T]$. 
Thus, 
we obtain infinitely many different global-in-time 
solutions to \eqref{equa-NS} 
and finish the proof.  
\hfill $\square$

\begin{remark} 
(Regular initial data: regular solutions)
\label{Rem-regular}
For regular $H^3_x$ initial data, 
the backward convex integration method 
can also construct 
solutions to \eqref{equa-hyperNSE-stocha} 
in 
a more regular class $L^{2r}_{\Omega}C_{T} H^{\beta'}_{ x} 
\cap L^{2r}_{\Omega}C^{\beta'}_{T} L^2_{ x}$
for some $\beta'>0$ 
as in the usual convex integrations. 

As a matter of fact, under the choice of parameters $\lbb_q$,  $\delta_{q}$ and $\ell_q$ given by \eqref{la}, 
following a similar manner as in 
Sections \ref{Sec-Interm-Flow}-\ref{Sec-Rey-stress} we have that the relaxed solutions $(u_{q},\rr_{q})$ satisfy \eqref{ul2}-\eqref{u-B-L2tx-conv} with \eqref{uc} replaced by 
\begin{align}\label{refine-uc1}
    \| u_q  \|_{ L^{m}_{\Omega}C_{T,x}^{1}} \lesssim \lambda_{q}^{4}(12^{q-1}\cdot 8\cdot 6mL^2)^{6\cdot 12^{q-1}}.
\end{align}
Note that, for $H^3_x$ initial data, the solution $\wt u$ to $\Lambda$-NES has the global regularity $C^1_{T,x}$, $\P$-a.s., 
and so, via a similar argument as in \eqref{verifyuc1}-\eqref{verifyuc1-m}, 
the refined estimate \eqref{refine-uc1} 
can be derived on the whole time regime. 

Hence, by the interpolation, 
for any $\beta'\in (0,\frac{\beta}{\beta+5})$,
\begin{align*} 
\sum_{q \geq 0} \| u_{q+1} - u_q \|_{L^{2r}_{\Omega}C_{T} H^{\beta'}_{ x}} &\lesssim  \sum_{q \geq 0} \| u_{q+1} - u_q \|_{L^{2r}_{\Omega}C_{T} L^2_{ x}}^{1-\beta'}\| u_{q+1} - u_q \|_{L^{2r}_{\Omega}C^1_{T,x}} ^{\beta'}\notag\\
&\lesssim  \sum_{q \geq 0}   \delta_{q+1}^{\frac{1-\beta'}{2}} \lambda_{q+1}^{5\beta'} \notag \\ 
&=\sum_{q =0}^2   \delta_{q+1}^{\frac{1-\beta'}{2}} \lambda_{q+1}^{5\beta'} + \sum_{q \geq 3}  \lambda_{q+1}^{- (1-\beta')\beta+ 5\beta' } <\infty, 
\end{align*}
where $a$ is chosen sufficiently large such that $\lambda_{q+1}\geq (12^{q}\cdot 96rL^2)^{6\cdot 12^{q}}$. Similar argument also yields
\begin{align*} 
 \sum_{q \geq 0} \| u_{q+1} - u_q \|_{L^{2r}_{\Omega}C^{\beta'}_{T} L^2_{ x}} &\lesssim  \sum_{q \geq 0} \| u_{q+1} - u_q \|_{L^{2r}_{\Omega}C_{T} L^2_{ x}}^{1-\beta'}\| u_{q+1} - u_q \|_{L^{2r}_{\Omega}C^1_{T,x}} ^{\beta'}  <\infty.
\end{align*} 
Thus, $\{u_q\}$ convergences in 
$L^{2r}_{\Omega}C_{T} H^{\beta'}_{ x} 
\cap L^{2r}_{\Omega}C^{\beta'}_{T} L^2_{ x}$, and so, 
as a limit, $u\in L^{2r}_{\Omega}C_{T} H^{\beta'}_{ x} 
\cap L^{2r}_{\Omega}C^{\beta'}_{T} L^2_{ x}$.
\end{remark}

\section{Appendix}
This Appendix collects some tools used in this paper.
Let us first recall the Aubin-Lions Lemma  from \cite{rrs16}
which is used to derive the global existence for the $\Lambda$-NSE.

\begin{lemma}[Aubin-Lions Lemma \cite{rrs16}]\label{ab-lem}
Let $\{u_n\}$ be a sequence of functions,
and
let $V:= L^2_{\sigma}\cap H^1_x$
and $V^*$ be a Banach space
and the corresponding dual space, respectively.
Assume that $p,q>1$ and
\begin{align*}
 \left\|u_n\right\|_{L^q(0, T ; V)}+\left\|\partial_t u_n\right\|_{L^p\left(0, T ; V^*\right)} <\infty \ \ \text { for all } n \in \mathbb{N}.
\end{align*}
Then there exists $u \in L^q(0, T ; L^2_\sigma)$ and a subsequence $\{u_{n_j}\}$ of $\{u_n\}$ such that
\begin{align*}
u_{n_j} \rightarrow u \quad \text { strongly in } \quad L^q(0, T ; L^2_\sigma) .
\end{align*}
\end{lemma}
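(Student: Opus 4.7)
I would prove the Aubin-Lions lemma via the classical three-ingredient approach: a weak compactness extraction, an Ehrling-Lions interpolation inequality, and a Fr\'echet-Kolmogorov type criterion for strong compactness in Bochner spaces.

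First, since $V = L^2_\sigma \cap H^1_x$ is reflexive and $\{u_n\}$ is bounded in $L^q(0,T;V)$, I would extract a subsequence (still denoted $\{u_n\}$) such that $u_n \rightharpoonup u$ weakly in $L^q(0,T;V)$ for some $u \in L^q(0,T;V) \hookrightarrow L^q(0,T;L^2_\sigma)$. This candidate $u$ will be the strong limit. Next I would invoke the Ehrling-Lions interpolation lemma: since the embedding $V \hookrightarrow L^2_\sigma$ is compact (Rellich-Kondrachov on $\mathbb{T}^3$) and $L^2_\sigma \hookrightarrow V^*$ continuously, for every $\varepsilon > 0$ there exists $C_\varepsilon > 0$ such that
\begin{equation*}
\|w\|_{L^2_\sigma} \leq \varepsilon \|w\|_V + C_\varepsilon \|w\|_{V^*}, \qquad \forall w \in V.
\end{equation*}
Applying this pointwise in time to $u_n(t) - u_m(t)$, raising to the $q$-th power and integrating in $t$, the first term is absorbed (after choosing $\varepsilon$ small) by the uniform bound $\|u_n\|_{L^q(0,T;V)} \lesssim 1$, so the problem reduces to establishing strong convergence of $\{u_n\}$ in $L^q(0,T;V^*)$ along a further subsequence.

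For the latter, I would exploit the uniform bound on $\partial_t u_n$ in $L^p(0,T;V^*)$. This immediately yields the time-translation estimate
\begin{equation*}
\|u_n(\cdot + h) - u_n(\cdot)\|_{L^p(0,T-h;V^*)} \leq h^{1-1/p} \|\partial_t u_n\|_{L^p(0,T;V^*)} \lesssim h^{1-1/p},
\end{equation*}
uniformly in $n$, which provides equicontinuity in time in the $V^*$-topology. Combined with tightness in the spatial variable (obtained from the uniform $L^q(0,T;V)$ bound and the compactness of $V \hookrightarrow V^*$ applied to time-averages), the Fr\'echet-Kolmogorov criterion yields precompactness of $\{u_n\}$ in $L^p(0,T;V^*)$. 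Passing to a further subsequence I would obtain $u_n \to u$ strongly in $L^p(0,T;V^*)$, and by uniqueness of weak limits the limit must agree with $u$ identified above.

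The main obstacle I foresee is reconciling the two integrability exponents $p$ and $q$: the time-translation bound naturally sits in $L^p$, while the desired conclusion is $L^q$. I would resolve this by noting that the Ehrling interpolation, when integrated, yields
\begin{equation*}
\|u_n - u_m\|_{L^q(0,T;L^2_\sigma)} \leq \varepsilon \|u_n - u_m\|_{L^q(0,T;V)} + C_\varepsilon \|u_n - u_m\|_{L^q(0,T;V^*)},
\end{equation*}
and then using the uniform $L^\infty(0,T;V)$-type bound (or a truncation argument using that $\{u_n\}$ is bounded in $L^q(0,T;V)$ while $V \hookrightarrow V^*$) to interpolate the $V^*$-norm between the $L^p$-convergence and the $L^q$-boundedness, thereby upgrading to convergence in $L^q(0,T;V^*)$ and closing the argument.
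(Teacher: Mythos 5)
The paper does not actually prove this lemma --- it is quoted verbatim from \cite{rrs16} --- so there is no in-paper argument to compare with; your outline is the standard Aubin--Lions--Simon proof (weak-limit extraction, Ehrling interpolation, time-translation estimates plus the Fr\'echet--Kolmogorov/Simon criterion), and it is sound in its architecture and complete in the case $q\le p$, where strong $L^p(0,T;V^*)$-convergence trivially implies strong $L^q(0,T;V^*)$-convergence on the finite interval.

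The genuine gap is exactly the step you flag, in the case $q>p$: interpolating the $V^*$-norm between strong $L^p(0,T;V^*)$-convergence and mere $L^q(0,T;V^*)$-boundedness yields convergence only in $L^r(0,T;V^*)$ for every $r<q$, never at the endpoint $r=q$, and the ``uniform $L^\infty(0,T;V)$-type bound'' you invoke as an alternative is not among the hypotheses; so as written the Ehrling step at exponent $q$ does not close. The repair is already contained in your own translation estimate: the bound $\|u_n(t+h)-u_n(t)\|_{V^*}\le\int_t^{t+h}\|\partial_t u_n(s)\|_{V^*}\,\mathrm{d}s\le h^{1-1/p}\|\partial_t u_n\|_{L^p(0,T;V^*)}$ is uniform in $t$ and $n$, i.e.\ it is an $L^\infty$-in-time estimate, so the equicontinuity hypothesis of the Simon/Fr\'echet--Kolmogorov criterion holds directly in $L^q(0,T-h;V^*)$ for any finite $q$ (the time-average condition, boundedness of $\int_{t_1}^{t_2}u_n\,\mathrm{d}t$ in $V$ plus compactness of $V\hookrightarrow V^*$, is unchanged); running the criterion in $L^q(0,T;V^*)$ and then applying Ehrling at exponent $q$ finishes the proof. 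Equivalently, from the hypotheses one gets a uniform bound in $C([0,T];V^*)$, namely $\|u_n(t)\|_{V^*}\le T^{-1}\|u_n\|_{L^1(0,T;V^*)}+\|\partial_t u_n\|_{L^1(0,T;V^*)}$, and then $\|u_n-u\|_{L^q(0,T;V^*)}^q\lesssim\|u_n-u\|_{L^\infty(0,T;V^*)}^{q-p}\|u_n-u\|_{L^p(0,T;V^*)}^{p}\to0$, which upgrades your $L^p$-convergence to $L^q$. Two further small points: the hypothesis must be read as a bound uniform in $n$ (as you implicitly did --- mere finiteness for each $n$ gives no compactness), and $q<\infty$ is needed both for your reflexivity argument and for the Fr\'echet--Kolmogorov step.
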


Next, we present the following Geometric Lemma
that is crucial in the construction of velocity flows
in the convex integration scheme.

\begin{lemma} [Geometric Lemma, \cite{bbv20}, Lemma 4.2]
	\label{geometric lem 2}
	There exist a set $\lambda  \subset \mathbb{S}^2  \cap \mathbb{Q}^3$ that consists of vectors $k$ with associated orthonormal bases $(k, k_1, k_2)$,  $\varepsilon_u > 0$, and smooth positive functions $\gamma_{(k)}: B_{\varepsilon_u}(\Id) \to \mathbb{R}$, where $B_{\varepsilon_u}(\Id)$ is the ball of radius $\varepsilon_u$ centered at the identity in the space of $3 \times 3$ symmetric matrices,  such that for  $S \in B_{\varepsilon_u}(\Id)$ we have the following identity:
	\begin{equation}
		\label{sym}
		S = \sum_{k \in \Lambda  } \gamma_{(k)}^2(S) k_1 \otimes k_1 .
	\end{equation}
\end{lemma}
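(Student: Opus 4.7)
The plan is to establish the decomposition \eqref{sym} by combining a linear-algebraic construction at the identity with a smooth extension via the implicit function theorem, taking care to keep all vectors in $\mathbb{Q}^3$.

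First I would exhibit a finite set $\Lambda\subset \mathbb{S}^2\cap \mathbb{Q}^3$ with associated orthonormal bases $(k,k_1,k_2)$, chosen so that the rank-one symmetric matrices $\{k_1\otimes k_1 : k\in \Lambda\}$ span the $6$-dimensional space $\mathrm{Sym}_3(\mathbb{R})$ of $3\times 3$ symmetric matrices. The rationality requirement is a finite combinatorial condition and can be handled by choosing Pythagorean-type triples, e.g. starting from $(3,4,0)/5, (0,3,4)/5, (4,0,3)/5$ and their permutations/sign flips, which give a rationally-parametrised family of orthonormal triples on $\mathbb{S}^2\cap \mathbb{Q}^3$. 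A convenient (redundant) choice is to take $\Lambda$ to be invariant under the natural $\mathbb{Z}_2\times \mathbb{Z}_2\times \mathbb{Z}_2$ sign-flip symmetry so that the associated linear map $\mathbb{R}^{\Lambda}\to \mathrm{Sym}_3$, $(c_{(k)})\mapsto \sum_{k\in \Lambda} c_{(k)}\, k_1\otimes k_1$, is surjective and exhibits the identity as an image of strictly positive coefficients.

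Next I would solve \eqref{sym} at $S=\mathrm{Id}$. Since the identity is positive definite and lies in the interior of the convex cone generated by $\{k_1\otimes k_1\}_{k\in \Lambda}$, there exist strictly positive coefficients $c_{(k)}^{0}>0$ with $\mathrm{Id}=\sum_{k\in\Lambda} c_{(k)}^{0}\, k_1\otimes k_1$. The selection of these coefficients can be made symmetric under the sign-flip action to simplify the verification. Then, because the linear map above is surjective onto the $6$-dimensional $\mathrm{Sym}_3$, it admits a smooth (in fact linear) right-inverse $\mathrm{Sym}_3 \ni S\mapsto (c_{(k)}(S))_{k\in\Lambda}$ satisfying $c_{(k)}(\mathrm{Id})=c_{(k)}^{0}$. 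By continuity of $S\mapsto c_{(k)}(S)$ and strict positivity at $\mathrm{Id}$, there exists $\varepsilon_u>0$ such that $c_{(k)}(S)\geq \tfrac{1}{2}c_{(k)}^{0}>0$ for all $S\in B_{\varepsilon_u}(\mathrm{Id})$. Setting $\gamma_{(k)}(S):=\sqrt{c_{(k)}(S)}$ then produces a smooth positive function on $B_{\varepsilon_u}(\mathrm{Id})$ satisfying \eqref{sym}.

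The main obstacles I foresee are twofold. The first is picking $\Lambda$ small enough to be useful for intermittent-jet constructions (one typically wants a short, symmetric list with integer-multiple wavevectors $N_\Lambda k\in \mathbb{Z}^3$), while large enough that $\{k_1\otimes k_1\}$ still spans $\mathrm{Sym}_3$; this is a bookkeeping issue rather than a deep one and is handled by verifying the rank of an explicit matrix. The second is preserving rationality under the orthonormal-basis requirement, which forces one to work with orthonormal triples $(k,k_1,k_2)$ all simultaneously in $\mathbb{Q}^3$; this is achievable via rational rotations built from Pythagorean parametrisations. Once these combinatorial checks are discharged, the smoothness and positivity of $\gamma_{(k)}$ on a small neighbourhood of $\mathrm{Id}$ follow immediately from linear algebra and continuity, and no analytic work is required.
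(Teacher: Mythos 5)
The paper itself does not prove this lemma; it is quoted verbatim from \cite{bbv20}, so there is no in-paper argument to compare against. Your proof is correct and is essentially the standard argument behind such geometric lemmas: pick finitely many rational orthonormal triples $(k,k_1,k_2)$ so that $\{k_1\otimes k_1\}$ spans $\mathrm{Sym}_3$ and $\Id$ is a strictly positive combination, invert the (surjective) linear map by a smooth right inverse, and take square roots of the coefficients on a small ball where they stay positive. Two small points to tidy up: the right inverse matching $c_{(k)}(\Id)=c^0_{(k)}$ is affine rather than linear (take $c(S)=c^0+L(S-\Id)$ for any linear right inverse $L$), and the claim that $\Id$ lies in the interior of the cone requires either the full permutation-plus-sign-flip symmetry of the chosen family (so that $\sum_k k_1\otimes k_1$ is a multiple of $\Id$ by a trace/invariance argument) together with the spanning property, or simply the explicit rank-$6$ verification you defer; both are routine finite checks.
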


Note that there exists $N_{\Lambda} \in \mathbb{N}$ such that
\begin{equation} \label{NLambda}
	\{ N_{\Lambda} k,N_{\Lambda}k_1 , N_{\Lambda}k_2 \} \subset N_{\Lambda} \mathbb{S}^2 \cap \mathbb{Z}^3.
\end{equation}
Let $M_*$ denote the geometric constant such that
\begin{align}	\label{M bound}
	\sum_{k \in \Lambda_{u}} \norm{\gamma_{(k)}}_{C^4(B_{\varepsilon_u}(\Id))} \leq M_*.
\end{align}

We recall the definition of inverse-divergence operator $\mathcal{R} $, defined by
\begin{align}
	& (\mathcal{R}  v)^{kl} := \partial_k \Delta^{-1} v^l + \partial_l \Delta^{-1} v^k - \frac{1}{2}(\delta_{kl} + \partial_k \partial_l \Delta^{-1})\div \Delta^{-1} v, \label{operaru}
\end{align}
where $\int_{\mathbb{T}^3} v dx =0$
and $\varepsilon_{ijk}$ is the Levi-Civita tensor, $i,j,k,l \in \{1,2,3\}$.
The operator $\mathcal{R}$
is used in the construction of Reynolds stress.
It returns symmetric and trace-free matrices,
and satisfies the algebraic identities
\begin{align*}
	\div \mathcal{R} (v) = v.
\end{align*}
Moreover, $|\nabla|\mathcal{R} $ is a Calderon-Zygmund operator
and is bounded in the spaces $L^p$, $1<p<+\infty$.
See \cite{dls13} for more details.

The following stationary phase lemma is a useful tool to handle the errors of Reynolds stress.
\begin{lemma}[\cite{lt20}, Lemma 6; see also \cite{bv19b}, Lemma B.1] \label{commutator estimate1}
	Let $a \in C^{2}\left(\mathbb{T}^{3}\right)$. For all $1<p<+\infty$ we have
	$$
	\left\||\nabla|^{-1} \P_{\neq 0}\left(a \P_{\geq k} f\right)\right\|_{L^{p}\left(\mathbb{T}^{3}\right)} \lesssim k^{-1}\left\|\nabla^{2} a\right\|_{L^{\infty}\left(\mathbb{T}^{3}\right)}\|f\|_{L^{p}\left(\mathbb{T}^{3}\right)},
	$$
	holds for any smooth function $f \in L^{p}\left(\mathbb{T}^{3}\right)$.
\end{lemma}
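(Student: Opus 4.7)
The plan is to exploit the high-frequency localization of $\P_{\geq k}f$, which when composed with the smoothing operator $|\nabla|^{-1}$ yields a gain of $k^{-1}$, together with a double integration by parts that transfers two spatial derivatives from $\P_{\geq k}f$ onto the smooth coefficient $a$, so that only $\|\nabla^2 a\|_{L^\infty}$ appears in the final bound. Concretely, I would first introduce the auxiliary potential $g:=(-\Delta)^{-2}\P_{\geq k}f$ on the zero-mean subspace of $L^p(\T^3)$, so that $\Delta^2 g=\P_{\geq k}f$. Since the Fourier support of $g$ lies in $\{|\xi|\geq k\}$, the Mikhlin multiplier theorem and the $L^p$-boundedness of the Littlewood--Paley projector for $1<p<\infty$ yield the Bernstein-type estimates
\[
\|\nabla^j g\|_{L^p(\T^3)} \;\lesssim\; k^{j-4}\|\P_{\geq k}f\|_{L^p(\T^3)} \;\lesssim\; k^{j-4}\|f\|_{L^p(\T^3)}, \qquad 0\leq j\leq 4.
\]

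Next, I would use two successive Leibniz expansions to rewrite $a\P_{\geq k}f=a\Delta^2 g$ as a sum of divergence-form terms of the type $\p_i\p_j(\cdots)$ and a fully integrated remainder involving $(\nabla^2 a)\cdot(\nabla^2 g)$. Applying $|\nabla|^{-1}\P_{\neq 0}$, the divergence-form pieces become Calder\'on--Zygmund-type operators (bounded on $L^p$) acting on products of derivatives of $a$ with derivatives of $g$, while the remainder is handled via the $L^p$-boundedness of $|\nabla|^{-1}\P_{\neq 0}$ on mean-zero functions. H\"older's inequality combined with the Bernstein estimates above then bounds every surviving term by
\[
\|\nabla^\gamma a\|_{L^\infty}\|\nabla^\delta g\|_{L^p}\;\lesssim\; k^{\delta-4}\|\nabla^\gamma a\|_{L^\infty}\|f\|_{L^p},
\]
with $\gamma+\delta=3$ in the worst case, producing exactly the claimed $k^{-1}\|\nabla^2 a\|_{L^\infty}\|f\|_{L^p}$.

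The main technical obstacle is to arrange the integration by parts so that every surviving term genuinely carries $\gamma\geq 2$ derivatives of $a$, rather than a contribution proportional to $\|a\|_{L^\infty}$ or $\|\nabla a\|_{L^\infty}$. A naive single integration by parts using $h:=(-\Delta)^{-1}\P_{\geq k}f$ would produce the term $|\nabla|^{-1}\P_{\neq 0}\div(a\nabla h)$ with only the suboptimal bound $\|a\|_{L^\infty}\|\nabla h\|_{L^p}\lesssim k^{-1}\|a\|_{L^\infty}\|f\|_{L^p}$; one escapes this either by performing the argument in a smooth Littlewood--Paley framework, using the exchange identity $\p_i h=\Delta\,\p_i[(-\Delta)^{-1}h]$ on each high-frequency shell to re-absorb missing derivatives, or by executing the two-fold integration by parts above with careful bookkeeping so that the offending contribution is absorbed into the fully integrated remainder. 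The projector $\P_{\neq 0}$ is essential throughout in order to discard zero-mode contaminations generated along the way.
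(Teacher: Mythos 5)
First, note that the paper does not prove this lemma; it is quoted from \cite{lt20,bv19b}, so your proposal can only be measured against the standard argument there. Your integration-by-parts scheme does work, but only up to a point that you yourself flag and then try to argue away: the term carrying no derivatives of $a$ cannot be removed, by any bookkeeping, Littlewood--Paley exchange identity, or absorption into a remainder, because the inequality with only $\left\|\nabla^{2} a\right\|_{L^{\infty}}$ on the right is false as literally stated. Take $a\equiv const\neq 0$: then $\P_{\neq 0}(a\,\P_{\geq k}f)=a\,\P_{\geq k}f$ and the left-hand side is comparable to $|a|\,k^{-1}\|f\|_{L^p}$ for generic $f$, while the right-hand side vanishes. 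The honest version of the lemma (which is what the references prove and what the paper actually needs, e.g.\ in \eqref{I1-esti}, where the amplitudes are anyway estimated through their full $C^3$ norms) is
$$
\left\||\nabla|^{-1} \P_{\neq 0}\left(a \P_{\geq k} f\right)\right\|_{L^{p}} \lesssim k^{-1}\left\|a\right\|_{C^{2}}\|f\|_{L^{p}},
$$
and your plan, correctly executed, gives exactly this: writing $h:=(-\Delta)^{-1}\P_{\geq k}f$ (one potential suffices; the bi-Laplacian potential is not needed), $a\,\P_{\geq k}f = \partial_i\big(a\,\partial_i h\big)-\partial_j\big((\partial_j a)\, h\big)+(\Delta a)\,h$, and using that $|\nabla|^{-1}\P_{\neq 0}\partial_i$ is a Riesz-type multiplier bounded on $L^p$ together with $\|\nabla^j h\|_{L^p}\lesssim k^{j-2}\|f\|_{L^p}$ yields $k^{-1}\|a\|_{L^\infty}+k^{-2}\|\nabla a\|_{L^\infty}+k^{-2}\|\nabla^2 a\|_{L^\infty}$ times $\|f\|_{L^p}$. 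So the "main technical obstacle" you describe is not an obstacle to be overcome but a sign that the target you set (every surviving term carrying $\gamma\geq 2$ derivatives of $a$) is unattainable; insisting on it is the genuine gap in the proposal.

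Two further points. Technically, your claim that the double-divergence pieces become Calder\'on--Zygmund operators after applying $|\nabla|^{-1}\P_{\neq 0}$ is off: $|\nabla|^{-1}\P_{\neq 0}\partial_i\partial_j$ has symbol of order $+1$ and is not $L^p$-bounded; only one divergence can be absorbed, which is precisely why the $\|a\|_{L^\infty}k^{-1}$ term survives. Also, your Bernstein-type estimates require the frequency cut-offs to be smooth (sharp ball projections are not uniformly $L^p$-bounded), consistent with the conventions of the paper. Finally, for comparison, the proof in the cited references is shorter and purely frequency-based: split $a=\P_{<k/2}a+\P_{\geq k/2}a$; the product $\P_{<k/2}a\cdot\P_{\geq k}f$ lives at frequencies $\gtrsim k/2$, so $|\nabla|^{-1}$ gains the factor $k^{-1}$ against $\|a\|_{L^\infty}\|f\|_{L^p}$, while for the other piece one uses $\|\P_{\geq k/2}a\|_{L^\infty}\lesssim k^{-2}\|\nabla^2 a\|_{L^\infty}$ and the $L^p$-boundedness of $|\nabla|^{-1}\P_{\neq 0}$ on mean-free functions. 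Your potential-theoretic route, once corrected as above, is a legitimate alternative, but it proves (as does the standard route) the $\|a\|_{C^2}$ version, not the literal statement.
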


Lemma \ref{Decorrelation1} below permits to
control the $L^2$-norm of the velocity perturbations.
\begin{lemma}[\cite{cl21}, Lemma 2.4; see also \cite{bv19b},  Lemma 3.7]   \label{Decorrelation1}
	Let $\sigma\in \mathbb{N}$ and $f,g:\mathbb{T}^3\rightarrow \R$ be smooth functions. Then for every $p\in[1,+\9]$,
	\begin{equation}\label{lpdecor}
		\big|\|fg(\sigma\cdot)\|_{L^p(\T^3)}-\|f\|_{L^p(\T^3)}\|g\|_{L^p(\T^3)} \big|\lesssim \sigma^{-\frac{1}{p}}\|f\|_{C^1(\T^3)}\|g\|_{L^p(\T^3)}.
	\end{equation}
\end{lemma}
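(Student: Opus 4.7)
The plan is to exploit the scale separation: $f$ varies at unit scale, while $g(\sigma\cdot)$ is periodic with period $1/\sigma$. First I would partition $\mathbb{T}^3 = \bigcup_{i=1}^{\sigma^3} Q_i$ into $\sigma^3$ congruent cubes of side $1/\sigma$, chosen so that the affine change of variables $y = \sigma x$ sends each $Q_i$ bijectively onto $\mathbb{T}^3$, a full period of $g$. Denoting the center of $Q_i$ by $x_i$, two ingredients fall out immediately: the Lipschitz estimate $\|f - f(x_i)\|_{L^\infty(Q_i)} \leq \sigma^{-1}\|f\|_{C^1}$, and the exact identity $\|g(\sigma\cdot)\|_{L^p(Q_i)}^p = \sigma^{-3}\|g\|_{L^p(\mathbb{T}^3)}^p$ coming from the change of variables.

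Next I would compare $\|fg(\sigma\cdot)\|_{L^p(Q_i)}$ with the surrogate $|f(x_i)|\cdot \|g(\sigma\cdot)\|_{L^p(Q_i)}$. The triangle inequality on each cube combined with the Lipschitz bound gives $\big|\|fg(\sigma\cdot)\|_{L^p(Q_i)} - |f(x_i)|\sigma^{-3/p}\|g\|_{L^p}\big| \leq \sigma^{-1-3/p}\|f\|_{C^1}\|g\|_{L^p}$. Raising to the $p$-th power, summing over the $\sigma^3$ cubes (using $\|fg(\sigma\cdot)\|_{L^p(\mathbb{T}^3)}^p = \sum_i \|fg(\sigma\cdot)\|_{L^p(Q_i)}^p$), and applying Minkowski's inequality in $\ell^p$ produces $\big|\|fg(\sigma\cdot)\|_{L^p(\mathbb{T}^3)} - \|g\|_{L^p}\big(\sigma^{-3}\sum_i |f(x_i)|^p\big)^{1/p}\big| \lesssim \sigma^{-1}\|f\|_{C^1}\|g\|_{L^p}$. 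Recognising the bracketed quantity as a Riemann sum for $\|f\|_{L^p}^p$, the mean value theorem applied to $t\mapsto t^p$ yields $\big|\sigma^{-3}\sum_i |f(x_i)|^p - \|f\|_{L^p}^p\big| \lesssim \sigma^{-1}\|f\|_{C^1}^p$; then the subadditivity bound $|A^{1/p}-B^{1/p}| \leq |A-B|^{1/p}$ (valid for $A,B\geq 0$ and $p\geq 1$) converts this to $\big|\big(\sigma^{-3}\sum_i |f(x_i)|^p\big)^{1/p} - \|f\|_{L^p}\big| \lesssim \sigma^{-1/p}\|f\|_{C^1}$. One further application of the triangle inequality, together with $\sigma^{-1} \leq \sigma^{-1/p}$, combines the two displays into the claimed bound.

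The main obstacle is recovering the \emph{sharp} exponent $\sigma^{-1/p}$: the Minkowski step actually yields the much better rate $\sigma^{-1}$, but the Riemann-sum step is intrinsically only $\sigma^{-1/p}$ once one passes from $p$-th powers back to norms, with the concavity inequality $|A^{1/p}-B^{1/p}|\leq |A-B|^{1/p}$ being the exact step that creates the fractional exponent and which cannot be improved in general. The endpoint $p=\infty$ is degenerate: the right-hand side is merely $\|f\|_{C^1}\|g\|_{L^\infty}$, and the estimate reduces to the trivial bound $\|fg(\sigma\cdot)\|_{L^\infty} \leq \|f\|_{L^\infty}\|g\|_{L^\infty}$, so the interesting range is $p\in [1,\infty)$.
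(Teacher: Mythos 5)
Your proof is correct, and it is essentially the standard argument behind this lemma, which the paper itself does not prove but quotes from \cite{cl21} (Lemma 2.4) and \cite{bv19b} (Lemma 3.7): decomposition of $\mathbb{T}^3$ into $\sigma^3$ cubes of side $1/\sigma$, the periodicity identity $\|g(\sigma\cdot)\|_{L^p(Q_i)}^p=\sigma^{-3}\|g\|_{L^p(\mathbb{T}^3)}^p$, a Riemann-sum/mean-value comparison of $|f|^p$ with its values at the cube centers, and the concavity bound $|A^{1/p}-B^{1/p}|\le |A-B|^{1/p}$, which is indeed the step that produces the exponent $\sigma^{-1/p}$. The only cosmetic difference from the cited proofs is that you insert an intermediate $\ell^p$-Minkowski step at the level of norms instead of working with $p$-th powers throughout (both routes give the same final rate), and your separate treatment of the degenerate endpoint $p=\infty$ is fine; note also that the factor $p$ coming from the mean value theorem only enters the final constant as $p^{1/p}$, so the implicit constant stays uniform in $p$.
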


\begin{lemma}[\cite{bms21}, Proposition 2]\label{lem-mean}
	Let $a \in C^{\infty}\left(\mathbb{T}^3 ; \mathbb{R}\right), v \in C_0^{\infty}\left(\mathbb{T}^3 ; \mathbb{R}\right)$. Then for any $r \in[1, \infty]$ and $\lambda\in \mathbb{N}$,
	$$
	\left|\int_{\mathbb{T}^3} a v(\lambda x)\d x\right| \leq \lambda^{-1} C_r\|\nabla a\|_{L^r_x}\|v\|_{L^{r'}_x}.
	$$
\end{lemma}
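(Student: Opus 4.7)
The plan is to exploit the cancellation in $v$: once $v$ is written as the divergence of a smooth vector potential $V$, the desired $\lambda^{-1}$ gain drops out of the chain rule plus one integration by parts against $a$. So the proof really reduces to two ingredients, namely a potential lemma producing $V$ with $\div V = v$ and $\|V\|_{L^{r'}_x}\lesssim \|v\|_{L^{r'}_x}$, and a clean rescaling identity for $v(\lambda\,\cdot\,)$.

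First I would construct a smooth vector field $V:\T^3\to\R^3$ with $\div V=v$ satisfying $\|V\|_{L^{r'}_x}\leq C_r\|v\|_{L^{r'}_x}$ for each $r\in[1,\infty]$. For $r'\in(1,\infty)$ the natural choice $V:=\nabla\Delta^{-1}v$ works, since $\nabla\Delta^{-1}$ is a Calder\'on-Zygmund operator on mean-zero functions and thus bounded on $L^{r'}_x$. At the endpoints $r'=1$ and $r'=\infty$ one should instead invoke the Bogovski\u{\i} solution operator; thanks to the compact support of $v$ inside a fundamental domain of $\T^3$, this produces a compactly supported $V$ with the required $L^{r'}_x$-control, the constant $C_r$ depending only on the diameter of $\supp v$.

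With $V$ in hand the remainder is a short calculation. Because $\lambda\in\mathbb{N}$, the field $V(\lambda\,\cdot\,)$ is still smooth and $\T^3$-periodic, and the chain rule gives
$$
\div_x\!\bigl(V(\lambda x)\bigr)=\lambda\,(\div V)(\lambda x)=\lambda\, v(\lambda x),
$$
so $v(\lambda x)=\lambda^{-1}\div_x(V(\lambda x))$. Integrating by parts against $a$ on the torus (no boundary terms), we obtain
$$
\int_{\T^3} a(x)\,v(\lambda x)\,\d x
=-\lambda^{-1}\int_{\T^3}\nabla a(x)\cdot V(\lambda x)\,\d x,
$$
and H\"older's inequality bounds the right-hand side by $\lambda^{-1}\|\nabla a\|_{L^r_x}\|V(\lambda\,\cdot\,)\|_{L^{r'}_x}$. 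A change of variables, valid since $\lambda\in\mathbb{N}$ so that $\lambda\T^3$ is the disjoint union of $\lambda^3$ copies of $\T^3$, yields $\|V(\lambda\,\cdot\,)\|_{L^{r'}_x}=\|V\|_{L^{r'}_x}$. Combining with the $L^{r'}_x$ estimate from the first step closes the argument.

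The main obstacle I anticipate is not any of these calculational steps, each of which is essentially one line, but the endpoint divergence-inversion estimate at $r'\in\{1,\infty\}$, where $\nabla\Delta^{-1}$ itself fails to be bounded on $L^{r'}_x$. This is precisely the place where one must exploit the compactness of $\supp v$ and use the Bogovski\u{\i} operator in place of the Newtonian potential; once that endpoint bound is in place, the rest of the proof is routine.
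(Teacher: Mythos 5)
Your core mechanism --- write $v=\div V$ with $\|V\|_{L^{r'}_x}\lesssim\|v\|_{L^{r'}_x}$, use $v(\lambda x)=\lambda^{-1}\div_x\bigl(V(\lambda x)\bigr)$, integrate by parts on the torus, apply H\"older, and note $\|V(\lambda\cdot)\|_{L^{r'}_x}=\|V\|_{L^{r'}_x}$ for $\lambda\in\mathbb{N}$ --- is exactly the standard argument behind the cited result, and those steps are fine. The gap is in how you set it up. First, $C_0^\infty(\T^3)$ here (following the convention of \cite{bms21}) means smooth with \emph{zero spatial mean}, not compactly supported; on the torus the compact-support reading is vacuous, and without $\int_{\T^3}v\,\d x=0$ the lemma is simply false (take $a\equiv 1$: the left-hand side equals $|\int_{\T^3}v\,\d x|$ while the right-hand side vanishes), and no periodic $V$ with $\div V=v$ can exist, since the integral of a divergence over $\T^3$ is zero. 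The zero-mean hypothesis is therefore the load-bearing assumption; your proposal only gestures at it in the non-endpoint case and drops it entirely in the endpoint case.

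Second, the endpoint detour through Bogovski\u{\i} is both unnecessary and, as written, does not close. It is unnecessary because the obstruction you anticipate is not there: the operator $\nabla\Delta^{-1}$ is of order $-1$ (it is $\nabla^2\Delta^{-1}$, the double Riesz transform, that is a genuine Calder\'on--Zygmund operator), and its kernel on $\T^3$ is of size $\sim|x|^{-2}$ up to a smooth correction, hence integrable; by Young's inequality $V:=\nabla\Delta^{-1}v$ satisfies $\|V\|_{L^{p}_x}\leq C_p\|v\|_{L^{p}_x}$ for \emph{every} $p\in[1,\infty]$, endpoints included, with a constant depending only on $p$. This single choice of $V$ covers all $r\in[1,\infty]$ and is how the cited proof proceeds. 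It does not close because the Bogovski\u{\i} operator requires the compatibility condition $\int v=0$ over the support (which you never verify, and which does not follow from compact support alone), and its constant would depend on the geometry of $\supp v$, whereas $C_r$ in the lemma must depend on $r$ only --- note the lemma is applied in the paper to functions such as $\P_{\neq 0}(|W_{(k)}|^2)$, which are mean-free but in no useful sense compactly supported.
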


\medskip
\noindent{\bf Acknowledgment.} 
D. Zhang  is partially supported by 
the NSFC grants (No. 12271352, 12322108) 
and Shanghai Frontiers Science Center of Modern Analysis.

\end{document}